\providecommand{\tabularnewline}{\\}
\numberwithin{equation}{section}
\numberwithin{figure}{section}
  \theoremstyle{plain}
  \newtheorem*{thm*}{\protect\theoremname}
\theoremstyle{plain}
\newtheorem{thm}{\protect\theoremname}[section]
  \theoremstyle{definition}
  \newtheorem{defn}[thm]{\protect\definitionname}
  \theoremstyle{remark}
  \newtheorem{rem}[thm]{\protect\remarkname}
  \theoremstyle{plain}
  \newtheorem{prop}[thm]{\protect\propositionname}
  \theoremstyle{plain}
  \newtheorem{lem}[thm]{\protect\lemmaname}
  \theoremstyle{plain}
  \newtheorem{cor}[thm]{\protect\corollaryname}
  \theoremstyle{definition}
  \newtheorem{example}[thm]{\protect\examplename}
  \providecommand{\corollaryname}{Corollary}
  \providecommand{\definitionname}{Definition}
  \providecommand{\examplename}{Example}
  \providecommand{\lemmaname}{Lemma}
  \providecommand{\propositionname}{Proposition}
  \providecommand{\remarkname}{Remark}
  \providecommand{\theoremname}{Theorem}
\providecommand{\theoremname}{Theorem}
\begin{document}

\title{Splitting theorems for hypersurfaces in Lorentzian manifolds}

\author{Melanie Graf}

\address{Faculty of Mathematics, University of Vienna}

\email{melanie.graf@univie.ac.at}

\date{15.09.2016}
\begin{abstract}
The splitting problem for spacetimes with timelike Ricci curvature
bounded below by zero has been discussed extensively in the past (most
notably by Eschenburg, Galloway and Newman), in particular there
exist versions for both spacetimes containing a complete timelike
line and spacetimes containing a maximal hypersurface $\Sigma$ and
a (future) complete $\Sigma$-ray. For timelike Ricci curvature bounded
below by some $\kappa>0$ only the analogue to the first case has
been shown explicitly (see \citep{AGH1997}). 

In this paper we employ their methods (a geometric maximum principle
for the level sets of the Busemann function) to study analogues of
the second case for hypersurfaces with mean curvature bounded from
above by $\beta$. We show that given a $\Sigma$-ray of maximal length
$J^{+}(\Sigma)$ is isometric to a warped product if either $\kappa>0$
or $\beta\leq-(n-1)\sqrt{\left|\kappa\right|}$. Additionally we present
an elementary proof for such a splitting if one assumes that the volume
of (future) distance balls over subsets of this hypersurface is maximal.
\end{abstract}
\maketitle

\section{Introduction}

Over the past 50 years the study of comparison and rigidity theorems
has been an important part of Riemannian geometry and, as so often
the case, this interest soon carried over to Lorentzian geometry.
In the Riemannian context important results for manifolds with a bound
on the Ricci curvature (instead of the sectional curvatures) include
Myers's theorem, the maximal diameter theorem (\citep[Thm.~3.1]{Cheng})
and the Cheeger-Gromoll splitting theorem (\citep[Thm.~2]{CGro}),
which is already very similar to the most interesting Lorentzian case
from a physics point of view:
\begin{thm*}
[Cheeger-Gromoll splitting theorem] Let $(M,g)$ be a complete Riemannian
manifold of dimension $\geq2$ which satisfies
\[
\mathbf{Ric}(v,v)\geq0\:\:\mathrm{for\, all}\: v\in TM
\]
and which contains a complete geodesic line (i.e., a complete geodesic
that is minimizing between each of its points). Then $(M,g)$ can
be decomposed uniquely as an isometric product $N\times\mathbb{R}^{k}$,
where $N$ contains no lines, $k\geq1$ and $\mathbb{R}^{k}$ is equipped
with the standard euclidean metric.
\end{thm*}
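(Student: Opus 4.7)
The plan is to associate two Busemann functions to the given line and to prove that they combine to yield a parallel unit gradient whose flow provides the $\mathbb{R}$-factor; an inductive argument on the remaining factor then produces $\mathbb{R}^{k}$. Writing $\gamma^{\pm}(t)=\gamma(\pm t)$ for $t\geq 0$, define
\[
b^{\pm}(x) \;=\; \lim_{t\to\infty}\bigl(t - d(x,\gamma^{\pm}(t))\bigr).
\]
These limits exist because $t\mapsto t-d(x,\gamma^{\pm}(t))$ is monotone nondecreasing by the triangle inequality and bounded above by $d(x,\gamma(0))$. Using that $\gamma$ is a line (so $d(\gamma(-t),\gamma(t))=2t$), one obtains $b^{+}+b^{-}\leq 0$ on all of $M$, and a direct computation shows $b^{\pm}(\gamma(s)) = \mp s$, so equality holds along $\gamma$.

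The analytic core is subharmonicity. For each $x$ one constructs asymptotic co-rays to $\gamma^{\pm}$ emanating from $x$ and uses them to produce smooth upper support functions for $t-d(\,\cdot\,,\gamma^{\pm}(t))$ near $x$. The Laplacian comparison theorem under $\mathbf{Ric}\geq 0$ gives $\Delta d_{p}\leq (n-1)/d_{p}$ outside the cut locus, so these support functions have Laplacian bounded below by $-(n-1)/t$; letting $t\to\infty$ and invoking Calabi's barrier-sense formulation shows that $b^{\pm}$ are subharmonic in the support sense. Hence $b^{+}+b^{-}$ is support-subharmonic, $\leq 0$, and attains its maximum $0$ along $\gamma$; the Eschenburg--Heintze strong maximum principle for barrier-subharmonic functions then forces $b^{+}+b^{-}\equiv 0$. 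Since each $b^{\pm}$ is now simultaneously support-subharmonic and support-superharmonic, standard elliptic regularity promotes $b:=b^{+}$ to a smooth harmonic function.

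Next one promotes harmonicity to parallelism via the Bochner formula
\[
\tfrac{1}{2}\Delta|\nabla b|^{2} \;=\; |\mathrm{Hess}\,b|^{2} + \mathbf{Ric}(\nabla b,\nabla b) \;\geq\; 0.
\]
Because $b$ is a limit of $1$-Lipschitz functions, $|\nabla b|\leq 1$, with equality along $\gamma$. The strong maximum principle forces $|\nabla b|\equiv 1$, and feeding this back into Bochner yields $\mathrm{Hess}\,b\equiv 0$, i.e.\ $\nabla b$ is a parallel unit vector field. Its flow is therefore a global isometry $\mathbb{R}\times\Sigma\to M$ with $\Sigma:=b^{-1}(0)$; completeness of $\Sigma$ follows from completeness of $M$ and the parallelism of $\nabla b$, yielding the isometric product $M\cong\Sigma\times\mathbb{R}$.

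Finally, $\Sigma$ inherits $\mathbf{Ric}\geq 0$ and is complete. If it still contains a line one repeats the construction; the procedure terminates after at most $\dim M$ steps with a line-free factor $N$, giving $M\cong N\times\mathbb{R}^{k}$. Uniqueness is then automatic, since any parallel unit field on $N\times\mathbb{R}^{k}$ must be a constant combination of the $\mathbb{R}^{k}$-coordinate fields (otherwise its projection to $N$ would give a line there). The principal obstacle is the low a priori regularity of $b^{\pm}$: one cannot apply the smooth maximum principle or the Bochner formula naively, and the delicate passage through smooth support barriers together with Calabi's generalized maximum principle is what makes the argument work. This same analytic architecture, transposed to Lorentzian causal structure, will guide the hypersurface splitting theorems developed in the sequel.
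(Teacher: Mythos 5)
This theorem is quoted in the paper only as classical background, with a citation to Cheeger--Gromoll; the paper contains no proof of it, so there is nothing internal to compare against. Your sketch is the standard Eschenburg--Heintze-style argument (Busemann functions, barrier-sense sub/superharmonicity via Laplacian comparison along asymptotes, Calabi's strong maximum principle, then Bochner) and is essentially sound as an outline. Two small slips: with your definition $b^{\pm}(x)=\lim_{t\to\infty}(t-d(x,\gamma^{\pm}(t)))$ one gets $b^{\pm}(\gamma(s))=\pm s$, not $\mp s$ (the sum is still $0$ along $\gamma$, so nothing breaks); and the support functions $b^{\pm}(x)+t-d(\cdot,\tilde\gamma(t))$ built from asymptotes are \emph{lower} supports for $b^{\pm}$ (they lie below and touch at $x$), which is the correct direction for certifying $\Delta b^{\pm}\geq 0$ in the barrier sense.
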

In Lorentzian geometry one usually assumes only a bound on the timelike
Ricci curvature, i.e., we want to look at spacetimes $(M,g)$ where
\[
\mathbf{Ric}(v,v)\geq-(n-1)\kappa\, g(v,v)\:\:\mathrm{for\, all\, timelike\,}v\in TM
\]
for some $\kappa\in\mathbb{R}$.

So far most results have been focused on spacetimes having non-negative
timelike Ricci curvature (i.e., satisfying the \emph{strong energy
condition}), the exception being Andersson, Galloway and Howard (\citep{AGH1997}) who looked at the
case $\kappa>0$. A nice overview of past work can also be found in
\citep[Ch.~14]{BEE96}. 

The first Lorentzian splitting theorem for spacetimes using a bound
on the Ricci curvature instead of the sectional curvatures (for non-positive
timelike sectional curvatures the first such result was obtained by
Beem, Ehrlich, Markvorsen and Galloway in 1985, \citep{BEMG1985})
was due to Eschenburg in 1988 (\citep{eschenburg1988}) who additionally
assumed both global hyperbolicity and timelike geodesic completeness.
Shortly thereafter Galloway showed that the assumption of only global
hyperbolicity is sufficient (\citep{galloway1989_3}) and a year later
Newman gave a proof assuming timelike geodesic completeness but not
global hyperbolicity (\citep{newman1990}). These three results are
summarized as follows:
\begin{thm*}
[Lorentzian splitting theorem] Let $(M,g)$ be a spacetime of dimension
$n\geq2$ that
\begin{enumerate}
\item is either globally hyperbolic or timelike geodesically complete
\item satisfies the strong energy condition and
\item contains a complete timelike line (i.e., a curve maximizing the distance
between any of its points).
\end{enumerate}
Then $(M,g)$ splits isometrically as a product $(\mathbb{R}\times V,-dt^{2}\oplus h)$,
where $(V,h)$ is a complete Riemannian manifold.
\end{thm*}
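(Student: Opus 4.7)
The approach is the standard Busemann-function strategy inherited from the Cheeger-Gromoll proof, adapted to Lorentzian signature. Given the complete timelike line $\gamma:\mathbb{R}\to M$ parametrized by arclength, the first step is to define the forward and backward Busemann functions
\[
b^{+}(x) := \lim_{t\to\infty}\bigl(t - d(x,\gamma(t))\bigr),\qquad b^{-}(x) := \lim_{t\to-\infty}\bigl(-t - d(\gamma(t),x)\bigr),
\]
on $I^{-}(\gamma)$ and $I^{+}(\gamma)$ respectively. Existence of the limits follows from the reverse triangle inequality (which makes the two expressions monotone in $t$) together with $\gamma$ being maximizing (which provides a uniform bound). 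A further application of the reverse triangle inequality yields $b^{+}+b^{-}\le 0$ on the overlap $I^{+}(\gamma)\cap I^{-}(\gamma)$, with equality along $\gamma$ itself, where $b^{\pm}(\gamma(s))=\pm s$.

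The heart of the proof is to upgrade this inequality to $b^{+}+b^{-}\equiv 0$ on the overlap and simultaneously to obtain smoothness of $b^{+}$. I would realize the level sets of $b^{+}$ as limits of past-directed Lorentzian distance spheres centered at $\gamma(t)$, $t\to\infty$, constructed from asymptotes (co-rays) of $\gamma$ emanating from a base point. A Riccati analysis of the shape operator of these approximating hypersurfaces, fed by the strong energy condition $\mathbf{Ric}(v,v)\ge 0$ through the Raychaudhuri equation and comparison with the flat Minkowski model, forces a one-sided mean-curvature bound on the horospheres that makes $b^{+}$ a supersolution (in a supporting-hypersurface sense) of the Lorentzian analogue of $\Delta u = 0$, and symmetrically makes $b^{-}$ a subsolution. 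Since $b^{+}+b^{-}$ attains its maximum $0$ along $\gamma$, a geometric maximum principle in the Calabi-Eschenburg tradition, applied at a contact point of the supporting spacelike hypersurfaces, forces $b^{+}+b^{-}\equiv 0$ in a neighborhood of $\gamma$. Standard elliptic regularity then upgrades $b^{+}$ to a smooth function satisfying $g(\nabla b^{+},\nabla b^{+})=-1$ with vanishing Hessian.

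This maximum-principle and regularity step is where hypothesis (i) enters and is where I expect the main obstacle to lie. Global hyperbolicity gives continuity of $d$ and the existence of maximizing geodesics realizing the asymptotes, permitting the $C^{0}$ support arguments of Eschenburg and Galloway; without it one must instead use timelike geodesic completeness together with limit-curve constructions to produce asymptotes (Newman's route), and considerably more care is required to prevent their breakdown at the boundary of the overlap. Once smoothness and the affine property of $b^{+}$ are established, the splitting is extracted by setting $V:=(b^{+})^{-1}(0)$ with its induced (Riemannian) metric $h$, letting $\Phi$ be the flow of $\nabla b^{+}$, and verifying that $(t,p)\mapsto\Phi_{t}(p)$ is a global isometry $(\mathbb{R}\times V,-dt^{2}\oplus h)\to(M,g)$. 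Surjectivity uses that the entire line $\gamma$ lies in the image together with a standard continuation argument, while completeness of $(V,h)$ is inherited from the completeness or global hyperbolicity assumption on $(M,g)$.
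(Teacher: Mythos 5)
This statement is one of the background theorems the paper quotes from the literature (Eschenburg, Galloway, Newman) and does not prove; so there is no in-paper proof to compare against. Your sketch is the standard and correct strategy of those references, and it is exactly the machinery (Busemann functions of a ray, asymptotes, mean-curvature bounds on horospheres via Riccati/Raychaudhuri comparison, and the Eschenburg--Galloway/Andersson--Galloway--Howard geometric maximum principle for $\mathcal{C}^{0}$ spacelike hypersurfaces) that the paper itself adapts in Section 5 for its hypersurface version. One concrete slip: because the Lorentzian time separation satisfies the \emph{reverse} triangle inequality, the excess function has the opposite sign from the Riemannian Cheeger--Gromoll case, i.e.\ $\tau_{\gamma(-s)}(x)+\tau_{x}(\gamma(t))\leq t+s$ gives $b^{+}+b^{-}\geq0$ on the overlap, with the minimum (not maximum) value $0$ attained along $\gamma$; the maximum-principle step must be set up accordingly (which support hypersurface lies to the future of which). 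Beyond that, your outline correctly locates the genuinely hard points --- regularity of $b^{\pm}$ via the maximum principle on spacelike level sets rather than any global elliptic theory, and the passage from a local splitting near $\gamma$ to a global one, which is precisely where the two hypotheses in (1) lead to the two different proofs of Galloway and Newman --- but as written it is a roadmap rather than a proof of those steps.
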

While $\kappa=0$ certainly is the most important case from a physical
point of view, it nevertheless seems to be interesting to give a complete
description under which curvature assumptions similar results hold.
To allow for spacetimes that behave differently in one time direction
than in the other (e.g., ones that are incomplete to the future but
not to the past) we assume the existence of a smooth, acausal spacelike
hypersurface $\Sigma$ that is future causally complete (cf. Def.\
\ref{def: FCC}) and look at both a lower bound $\kappa$ on the timelike
Ricci curvature and an upper bound $\beta$ for the mean curvature
$H_{\Sigma}$ of $\Sigma$. This combination has so far only been
studied for $\kappa=\beta=0$, which again is a case of exceptional
physical interest because for $\kappa=0$ and $\beta<0$ these are
exactly the curvature assumptions in the Hawking singularity theorem.
Here \citep{GALLOWAY1989_4} showed
\begin{thm*}
Let $M$ be a space-time which obeys the strong energy condition containing
a smooth acausal maximal (i.e., zero mean curvature) spacelike hypersurface
$\Sigma$, which is either geodesically complete or future causally
complete. Assume $J^{+}(\Sigma)$ is future timelike geodesically
complete. If $\gamma$ is a future complete $\Sigma$-ray such that
$I^{-}(\gamma)\cap J^{+}(\Sigma)$ is globally hyperbolic then $J^{+}(\Sigma)$
is isometric to $([0,\infty)\times\Sigma,-dt^{2}\oplus h)$, where
$h$ is the induced metric on $\Sigma$.
\end{thm*}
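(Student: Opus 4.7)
The plan is to run a Busemann function argument of Cheeger--Gromoll / Galloway type, with the comparison done against the Lorentzian distance from $\Sigma$ rather than against a line. For $p$ in a suitable open subset of $I^{-}(\gamma)\cap J^{+}(\Sigma)$ I would define
\[
b(p)=\lim_{t\to\infty}\bigl(t-d(p,\gamma(t))\bigr),\qquad s(p)=d(\Sigma,p).
\]
Since $\gamma$ is a $\Sigma$-ray we have $d(\Sigma,\gamma(t))=t$, and the reverse triangle inequality together with the maximality of $\gamma\vert_{[t_1,t_2]}$ shows that $t\mapsto t-d(p,\gamma(t))$ is non-increasing and bounded below by $s(p)$. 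Hence $b$ is well defined, upper semicontinuous, and satisfies $b\geq s$ everywhere on its domain, with equality $b(\gamma(t))=s(\gamma(t))=t$ along the ray itself. In particular $b-s$ attains its minimum, zero, along $\gamma$.

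The next step is the pair of opposing mean curvature estimates that drive the maximum principle. Where $s$ is smooth, its level sets are flowed out by the future normal congruence from $\Sigma$; applying Raychaudhuri to this congruence, using the initial condition $H_{\Sigma}=0$ and the strong energy condition $\mathrm{Ric}(\dot\gamma,\dot\gamma)\geq 0$, yields $\theta'\leq 0$ along the congruence, so the level sets of $s$ have (future) mean curvature $\leq 0$. For $b$ one argues in the support sense: for every $p$ and every $\varepsilon>0$, a maximizing segment from $p$ to some $\gamma(t_k)$ with $t_k$ large provides, via Raychaudhuri run backwards from $\gamma(t_k)$, a smooth spacelike hypersurface through $p$ whose level set supports $\{b=b(p)\}$ from the past and whose mean curvature at $p$ exceeds $-\varepsilon$. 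Global hyperbolicity of $I^{-}(\gamma)\cap J^{+}(\Sigma)$ together with future timelike geodesic completeness of $J^{+}(\Sigma)$ are used here to ensure the maximizers and the limit object exist.

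With these barriers in place, I would invoke Eschenburg's / Galloway's geometric maximum principle for $C^0$ spacelike hypersurfaces: the level set of $s$ through $\gamma(t)$ supports $\{b=t\}$ from below, mean curvatures $\leq 0$ and $\geq 0$ are sandwiched, and touching forces equality $b\equiv s$ locally around $\gamma$, smoothness of both, and rigidity in Raychaudhuri ($\sigma=0$, $\mathrm{Ric}(\dot\gamma,\dot\gamma)=0$ on the congruence, and $\theta\equiv 0$). Consequently the normal geodesics to $\Sigma$ remain maximizing and focal-point free, and $\nabla s$ is a smooth, future-directed, unit timelike, parallel vector field whose level sets are totally geodesic copies of $\Sigma$.

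The final paragraph of the proof would extend this local splitting to all of $J^{+}(\Sigma)$. Since the set of points of $\Sigma$ from which the future normal geodesic is maximizing for all time is closed by limit arguments and open by the rigidity just obtained, a connectedness argument yields that it is all of $\Sigma$; the normal exponential map $[0,\infty)\times\Sigma\to J^{+}(\Sigma)$ is then a diffeomorphism that pulls $g$ back to $-dt^2\oplus h$. The main technical obstacle I expect is the maximum principle step: one must produce smooth one-sided support hypersurfaces for the only $C^0$ function $b$ with sufficiently sharp mean curvature control, and then ensure the Hopf-type conclusion is genuinely available in the spacelike setting. Subsidiary difficulties are the well-definedness and finiteness of $b$ on a large enough domain (relying on global hyperbolicity of $I^{-}(\gamma)\cap J^{+}(\Sigma)$) and the final extension of the local splitting, where one must rule out focal behavior of normal geodesics emanating from $\Sigma$ far from $\gamma$.
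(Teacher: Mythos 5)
Your proposal is correct in outline and follows essentially the same route as the paper's machinery (and Galloway's original argument, which the paper only cites): the Busemann function of the $\Sigma$-ray bounded below by $\tau_{\Sigma}$, opposing mean-curvature barriers from the Raychaudhuri/Riccati comparison with $H_{\Sigma}=0$ and the strong energy condition, the geometric maximum principle for $\mathcal{C}^{0}$ spacelike hypersurfaces, and an open--closed extension of the resulting local splitting. The one refinement to note is that the past support hypersurfaces for the Busemann level sets must be taken as distance spheres centered on points of the \emph{asymptote} (the limit of the maximizers to $\gamma(t_{k})$), not on the $\gamma(t_{k})$ themselves, so that they touch $\{b=b(p)\}$ exactly rather than up to $\varepsilon_{k}$ --- precisely the point you flag as the main technical obstacle and which Prop.~\ref{prop:properties of buseman} resolves.
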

For general $\kappa,\beta$ there is recent work by Treude and Grant
(\citep{TG}) using Riccati comparison theorems from \citep{EH} to
derive comparison results regarding the time evolution of the area
and volume of subsets of $\Sigma$, comparing them to the evolution
in fixed Lorentzian warped product manifolds. Similar comparison
techniques have been used in the past with the Raychaudhuri equation
to show the Hawking singularity theorem, or more precisely that no
timelike geodesic starting at $\Sigma$ can have length greater than
$-\frac{n-1}{\beta}$ if $\kappa=0$ and $\beta<0$ (see, e.g., \citep{Seno1}
for an overview). Those same techniques can be used to show that
this length is bounded from above by a constant $b_{\kappa,\beta}\leq\infty$
for arbitrary $\kappa,\beta$. Concrete values for $b_{\kappa,\beta}$
can be found in Table \ref{tab:Warping-functions-for}. Our first
goal is to investigate under which conditions the existence of an
inextendible geodesic maximizing the distance to $\Sigma$ of length
exactly $b_{\kappa,\beta}$ already implies that $I^{+}(\Sigma)$
is isometric to the warped product $(0,b_{\kappa,\beta})\times_{f_{\kappa,\beta}}(\Sigma,\frac{1}{f_{\kappa,\beta}(0)}g|_{\Sigma})$
(with $f_{\kappa,\beta}$ from Table \ref{tab:Warping-functions-for}).

For $\kappa=\beta=0$ this question is basically answered positively
by \citep[Thm.~C]{GALLOWAY1989_4} (see above), their methods relying
on the value of $b_{\kappa,\beta}$ going to infinity from below as
$\beta\nearrow0$ (and remains infinity for all $\beta\geq0$). For
$\kappa<0$ the same transition happens at $\beta=-(n-1)\sqrt{\left|\kappa\right|}$,
hence the methods used in \citep{GALLOWAY1989_4} for $\kappa=\beta=0$
would carry over to $\kappa<0,\,\beta=-(n-1)\sqrt{\left|\kappa\right|}$.
For other values $\kappa,\beta$ with $b_{\kappa,\beta}=\infty$,
i.e., $\kappa\leq0$ and $\beta>-(n-1)\sqrt{\left|\kappa\right|}$,
it is easy to see that similar results are false (see Example \ref{ex: other kappa beta do not work},
the spacetime containing an inextendible maximizing geodesic is nothing
``special'' in that case). For the remaining variations of $\kappa,\beta$
(with $b_{\kappa,\beta}<\infty$, i.e., $\kappa>0$ or $\beta<-(n-1)\sqrt{\left|\kappa\right|}$)
analogues remain true, but the proof requires a stronger (i.e., low
regularity) version of the maximum principle shown in \citep{AGH1997},
which also simplifies the proof in the second boundary case, so we
are not going to treat this case separately.

At this point one should also briefly mention recent results of Bernal
and Sánchez (\citep{bernalSanchez_globHypSplitting}), who showed
that actually any globally hyperbolic spacetime $(M,g)$ admits a
smooth time function $\mathcal{T}$ with smooth Cauchy hypersurfaces
$\Sigma_{\mathcal{T}}$ as level sets and thus splits isometrically
as $M\cong\mathbb{R}\times\Sigma$ with $g=-\beta d\mathcal{T}^{2}+h_{\mathcal{T}}$,
where $\Sigma$ is a smooth Cauchy hypersurface for $M$, $\beta:\mathbb{R}\times\Sigma\to\mathbb{R}_{+}$
is smooth and $h_{\mathcal{T}}$ is a Riemannian metric on $\Sigma_{\mathcal{T}}$.
Their work improves upon a classical topological splitting result
obtained by Geroch in 1970 (\citep{Geroch1970uw}). They refined their
arguments further to also show that given any spacelike Cauchy hypersurface
$\Sigma$ there exists a Cauchy temporal function $\mathcal{T}:M\to\mathbb{R}$
such that $\Sigma=\mathcal{T}^{-1}(0)$ (see \citep{BernalSanchez2006}).
One should note, however, that these results require neither curvature
nor any maximality assumptions and thus there is no additional information
on $\beta$ or the time evolution of $h_{\mathcal{T}}$ and the product
structure obtained this way will in general not be a warped product.

\bigskip{}

The outline of the paper is as follows. In sections \ref{sec:Definitions}
and \ref{sec:Comparison-results} we review basic definitions and
the comparison results presented in \citep{TG}. We also include a
table (Table \ref{tab:Warping-functions-for}) giving a detailed description
of the comparison spaces (introduced by \citep{TG}) that we will
use. 

In section \ref{sec:Maximal-injectivity-radius} we show that maximality
in the injectivity radius already implies that $M$ is (isometric
to) a warped product: While this seems to be a somewhat well-known
fact a detailed proof is hard to find and it ties in nicely with the
following results. 

In section \ref{sec:A-splitting-theorem for maximal ray} we use a
combination of arguments from \citep{eschenburg1988}, \citep{galloway1989_3},
\citep{GALLOWAY1989_4} and \citep{AGH1997} to show our main result,
which is that for $\kappa<0$ or $\beta\leq-(n-1)\sqrt{\left|\kappa\right|}$
the existence of an inextendible geodesic maximizing the distance
to $\Sigma$ of length exactly $b_{\kappa,\beta}$ already implies
that $I^{+}(\Sigma)\cong(0,b_{\kappa,\beta})\times_{f_{\kappa,\beta}}(\Sigma,\frac{1}{f_{\kappa,\beta}(0)^{2}}g|_{\Sigma})$
(with $f_{\kappa,\beta}$ from Table \ref{tab:Warping-functions-for}). 

Then in section \ref{sec:Maximal-volume-rigidity} we give an elementary
proof (that requires neither the Busemann function nor the maximum
principle) of the same result under the slightly stronger assumption
of maximality in certain volumes instead of the existence of a ray
of maximal length.

\subsection*{Notation}

Throughout, $M$ will always be a connected, Hausdorff and second
countable smooth manifold of dimension $n\geq2$ with a Lorentzian
metric $g$ and a time orientation. We also always assume that $(M,g)$
is globally hyperbolic. The curvature tensor of the metric is defined
with the convention $\mathbf{R}(X,Y)Z=\left(\left[\nabla_{X},\nabla_{Y}\right]-\nabla_{[X,Y]}\right)Z$
and we denote the Ricci tensor of $g$ by $\mathbf{Ric}$. Given a
spacelike, acausal hypersurface $\Sigma\subset M$ with future pointing
unit normal $\mathbf{n}_{\Sigma}$ we define the shape operator with
sign convention $\mathbf{S}_{\Sigma}=\nabla\mathbf{n}_{\Sigma}$ and
the mean curvature as $H_{\Sigma}=\mathrm{tr\,}\mathbf{S}_{\Sigma}$.

\section{\label{sec:Definitions}Definitions}

As usual we define causal (timelike) curves to be locally Lipschitz
continuous maps $\gamma:I\to M$ ($I$ being an interval) with $\dot{\gamma}\neq0$
and $g(\dot{\gamma},\dot{\gamma})\leq0$ ($<0$) a.e.\ and a causal
curve is called future (past) directed if $\dot{\gamma}$ is future
(past) pointing almost everywhere. For $p,q\in M$ we write $p\ll q$
if there is a future directed (f.d.) timelike curve from $p$ to $q$
and $p\leq q$ if either $p=q$ or there exists a f.d.\ causal curve
from $p$ to $q$ and we set
\begin{align*}
I^{+}(p): & =\left\{ q\in M:\, p\ll q\right\} \\
J^{+}(p): & =\{q\in M:\, p\leq q\}.
\end{align*}

\begin{defn}
[Signed time separation] Let $p\in M$. Then for $q\in M$ the \emph{future
time separation }to $p$ is defined by 
\begin{equation}
\tau_{p}(q):=\sup(\left\{ L(\gamma):\gamma\,\text{is a f.d. causal curve form }p\text{ to }q\right\} \cup\{0\}),\label{eq:point time sep}
\end{equation}
where $L(\gamma)$ denotes the Lorentzian arc-length of $\gamma$,
i.e., for a curve $\gamma:(t_{1},t_{2})\to M$ one has $L(\gamma):=\int_{t_{1}}^{t_{2}}\sqrt{|g(\dot{\gamma(t)},\dot{\gamma(t)})|}dt$. 

Similarly one defines the \emph{signed time separation} to an acausal
subset $\Sigma$ by
\begin{align}
\tau_{\Sigma}(p): & =\begin{cases}
\sup_{q\in\Sigma}\tau(q,p) & p\in I^{+}(\Sigma)\\
-\sup_{q\in\Sigma}\tau(p,q) & p\in I^{-}(\Sigma)\\
0 & \mathrm{otherwise}
\end{cases}.\label{eq:subset time sep}
\end{align}

\end{defn}
It is easy to see that both the time separation to a point and to
an acausal subset satisfy the reverse triangle inequality
\begin{equation}
\tau_{p}(q)+\tau_{q}(r)\leq\tau_{p}(r)\:\mathrm{and}\:\tau_{\Sigma}(q)+\tau_{q}(r)\leq\tau_{\Sigma}(r)\label{eq: RTI}
\end{equation}
for $p\leq q\leq r$ and $r\geq q\in I^{+}(\Sigma)$, respectively.

If $(M,g)$ is globally hyperbolic (i.e., it contains no closed causal
curves and $J^{+}(p)\cap J^{-}(q)$ is compact for all $p,q\in M$)
then any two points $p,q\in M$ with $p\leq q$ can be connected by
a maximizing curve (\citep[Prop.~14.19]{ONeill_SRG}). If an acausal
subset has the following property, one also gets the existence of
maximizing curves to this subset.
\begin{defn}
[Future causally complete] \label{def: FCC}A subset $\Sigma\subset M$
is called \emph{future causally complete} (\emph{FCC}) if for any
$p\in J^{+}(\Sigma)$ the set $J^{-}(p)\cap\Sigma$ has compact relative
closure in $\Sigma$.\end{defn}
\begin{rem}
Note that any smooth spacelike Cauchy hypersurface for $M$ is also
a smooth, spacelike, acausal, FCC hypersurface (see \citep[Lem.~14.40,~14.42~and~14.43]{ONeill_SRG}).
If $\Sigma$ is also past causally complete and $(M,g)$ is globally
hyperbolic then $\Sigma$ is a (smooth, spacelike) Cauchy hypersurface.
\end{rem}
The following Proposition sums up some common knowledge about the
(future) time-separation to an acausal ($p\leq q$ implies $p=q$
for any $p,q\in\Sigma$), FCC subset (see \citep[Thm.~2]{TG}).
\begin{prop}
\label{prop: time sep continuous}Let $\Sigma\subset M$ be an acausal,
FCC subset. Then the future time-separation $\tau_{\Sigma}:\, M\to\mathbb{R}$
to $\Sigma$ is finite-valued and continuous and for any $p\in J^{+}(\Sigma)\setminus\Sigma$
there exists $q\in\Sigma$ and a causal curve $\gamma$ from $q$
to $p$ with $\tau_{\Sigma}(p)=\tau(q,p)=L(\gamma)$. Any such maximizing
curve $\gamma$ has to be a (reparametrization of) a geodesic, which
is timelike for $p\in I^{+}(\Sigma)$ and null otherwise. If $\Sigma\subset M$
is, additionally, a spacelike hypersurface, then any maximizing geodesic
has to start orthogonally to $\Sigma$ (so in particular $I^{+}(\Sigma)=J^{+}(\Sigma)$).
\end{prop}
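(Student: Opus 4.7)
The plan is to establish the four assertions sequentially, relying throughout on global hyperbolicity of $(M,g)$ and the FCC property of $\Sigma$.

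First, for finiteness and the existence of a maximiser I would observe that for $p\in J^{+}(\Sigma)$ any causal curve reaching $p$ from $\Sigma$ has its initial point in $J^{-}(p)\cap\Sigma$, which by global hyperbolicity (so that $J^{-}(p)$ is closed in $M$) together with the FCC hypothesis is a compact subset $K\subseteq\Sigma$. The pointwise Lorentzian time separation $\tau\colon M\times M\to[0,\infty)$ is finite and continuous on a globally hyperbolic spacetime, so $\tau_{\Sigma}(p)=\sup_{q\in K}\tau(q,p)$ is finite and attained at some $q^{*}\in K$. Then \citep[Prop.~14.19]{ONeill_SRG} produces a maximal causal curve $\gamma$ from $q^{*}$ to $p$ realising $\tau(q^{*},p)$, which is automatically a reparametrised causal geodesic; it is timelike iff $\tau(q^{*},p)>0$, i.e.\ iff $p\in I^{+}(\Sigma)$, and null otherwise. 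For $p\notin J^{+}(\Sigma)\cup I^{-}(\Sigma)$ we have $\tau_{\Sigma}(p)=0$ by definition, and the case $p\in I^{-}(\Sigma)$ is symmetric.

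For continuity I would split into upper and lower semicontinuity. For USC at $p$, pick $p_{n}\to p$ in $J^{+}(\Sigma)$ together with maximisers $\gamma_{n}$ from $q_{n}\in\Sigma$ to $p_{n}$. Choosing a relatively compact neighbourhood $U$ of $p$ and, by global hyperbolicity, a point $p'\in M$ with $\overline{U}\subseteq J^{-}(p')$, the FCC property applied at $p'$ forces the $q_{n}$ into the compact set $\overline{J^{-}(p')\cap\Sigma}^{\Sigma}$ for all large $n$. The standard limit curve lemma in globally hyperbolic spacetimes then yields, along a subsequence, a limiting causal curve $\gamma_{\infty}$ from some $q_{\infty}\in\Sigma$ to $p$, and upper semicontinuity of the Lorentzian length functional under this kind of convergence gives $\limsup L(\gamma_{n})\leq L(\gamma_{\infty})\leq\tau_{\Sigma}(p)$. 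Lower semicontinuity at $p\in I^{+}(\Sigma)$ is easier: for a fixed maximiser from $q\in\Sigma$ to $p$, openness of $I^{-}$ together with continuity of $\tau$ on $M\times M$ yields $\tau(q,p_{n})\to\tau(q,p)=\tau_{\Sigma}(p)$ as $p_{n}\to p$; at points with $\tau_{\Sigma}(p)=0$ lower semicontinuity is automatic.

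When $\Sigma$ is additionally a smooth spacelike hypersurface and $\gamma\colon[0,a]\to M$ is a maximising causal geodesic starting at $q:=\gamma(0)\in\Sigma$, I would apply the first variation formula to a one-parameter variation obtained by sliding $\gamma(0)$ along an arbitrary smooth curve in $\Sigma$ while keeping $\gamma(a)=p$ fixed; vanishing of the first variation forces $g(\dot{\gamma}(0),V)=0$ for every $V\in T_{q}\Sigma$, so $\dot{\gamma}(0)\in T_{q}\Sigma^{\perp}$. Since $\Sigma$ is spacelike, $T_{q}\Sigma^{\perp}$ is the one-dimensional timelike line spanned by $\mathbf{n}_{\Sigma}(q)$, and any nonzero causal vector in a timelike line must itself be timelike; thus $\gamma$ is timelike and $p\in I^{+}(\Sigma)$. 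This shows $J^{+}(\Sigma)\setminus\Sigma\subseteq I^{+}(\Sigma)$ and in particular rules out the null case whenever $\Sigma$ is a smooth spacelike hypersurface. The main obstacle I expect is the upper semicontinuity step: without FCC the base points $q_{n}\in\Sigma$ could escape to infinity along $\Sigma$, producing limit curves of spuriously large length, and pinning them down in a fixed compact set is precisely where FCC is used in an essential way.
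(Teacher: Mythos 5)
The paper offers no proof of this proposition at all --- it is quoted from Treude--Grant (their Thm.~2) --- so there is nothing internal to compare against; your argument reconstructs the standard proof, and its first three parts are sound. Finiteness and attainment follow, as you say, from compactness of $\overline{J^{-}(p)\cap\Sigma}\cap\Sigma$ (FCC plus closedness of $J^{-}(p)$ in a globally hyperbolic spacetime) together with continuity of the two-point time separation; upper semicontinuity via the limit-curve argument works because your choice of $p'\in I^{+}(p)$ and $\overline{U}\subseteq I^{-}(p')$ really does trap the base points $q_{n}$ in a fixed compact subset of $\Sigma$, which is exactly where FCC is needed; lower semicontinuity from a fixed maximizer is correct.

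The one step that fails as written is the last one. The first variation of arc length is available only for \emph{timelike} maximizers: at a null geodesic the integrand $\sqrt{|g(\dot{\gamma},\dot{\gamma})|}$ is not differentiable and nearby variation curves need not even be causal, so you cannot conclude orthogonality of a null maximizer this way --- yet the null case is precisely the one you must exclude to obtain $J^{+}(\Sigma)\setminus\Sigma\subseteq I^{+}(\Sigma)$, and as written the argument is circular there (orthogonality is used to prove timelikeness, but its derivation presupposes timelikeness). The standard fix is the deformation lemma (see, e.g., the corresponding results in \citep[Ch.~10]{ONeill_SRG}): a causal curve from a spacelike hypersurface $\Sigma$ to $p$ that is not a null geodesic meeting $\Sigma$ orthogonally can be deformed to a timelike curve from $\Sigma$ to $p$; since no null vector is orthogonal to a spacelike hypersurface, every $p\in J^{+}(\Sigma)\setminus\Sigma$ lies in $I^{+}(\Sigma)$, after which all maximizers are timelike and your first-variation argument legitimately yields orthogonality. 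A minor further point: your ``symmetric'' treatment of $p\in I^{-}(\Sigma)$ would require past causal completeness, which is not assumed; the proposition concerns the future time separation (vanishing outside $J^{+}(\Sigma)$), so that case should simply be dropped rather than claimed by symmetry.
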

An important tool will be the normal exponential map to $\Sigma$.
\begin{defn}
[Normal exponential map] Let $\mathcal{D}^{N}\subset T\Sigma^{\perp}$
be the set of all $w\in T\Sigma^{\perp}$ such that $w\in\mathrm{dom}(\exp_{\pi(w)})$.
The normal exponential map $\exp^{N}:\mathcal{D}^{N}\to M$ to $\Sigma$
is defined by
\[
\exp^{N}(w):=\exp_{\pi(w)}(w)=\gamma_{v}(t)
\]
for $t\in\mathbb{R}$ and $v\in S^{+}N\Sigma$ such that $w=tv$.
\end{defn}
For any $v\in TM$ we denote by $\gamma_{v}$ the unique inextendible
geodesic starting at $\pi(v)$ with initial velocity $v$. For $v\in T\Sigma^{\perp}$
each $\gamma_{v}|_{[0,t]}$ maximizes the distance to $\Sigma$ for
small $t$, but it may not remain maximizing for larger $t$. We write
$S^{+}N\Sigma$ for the (future) unit normal bundle to $\Sigma$,
i.e.\, 
\[
S^{+}N\Sigma:=\left\{ v\in TM|_{\Sigma}:\, v\, f.p.,\, g(v,w)=0\,\forall w\in T_{\pi(v)}\Sigma\:\text{and}\: g(v,v)=-1\right\} \subset T\Sigma^{\perp},
\]
and define
\begin{defn}
[Cut function]The function 
\begin{align*}
s_{\Sigma}^{+}:S^{+}N\Sigma & \to[0,\infty]\\
s_{\Sigma}^{+}(v) & :=\sup\left\{ t>0:\,\tau_{\Sigma}(\gamma_{v}(t))=L(\gamma_{v}|_{\left[0,t\right]})\right\} 
\end{align*}
is called future cut function.
\end{defn}
An easy adaptation (looking at a hypersurface instead of a point)
of arguments from \citep[Prop.~9.7~and~Thm.~9.8]{BEE96} (see also
\citep[3.2.29]{Treude_Diplomarbeit}) shows
\begin{lem}
\label{lem:The-cut-function is continuous}The cut function $s_{\Sigma}^{+}$
is lower semi-continuous and continuous at points $v$ where $s_{\Sigma}^{+}(v)=\infty$
or $s_{\Sigma}^{+}(v)v\in\mathcal{D}^{N}$. \end{lem}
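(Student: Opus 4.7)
The plan is to adapt the standard timelike-cut-function proof from \citep[Prop.~9.7 and Thm.~9.8]{BEE96} to the hypersurface setting, replacing compactness of causal diamonds by the FCC hypothesis (cf.~Def.~\ref{def: FCC}) and using continuity of $\tau_\Sigma$ from Prop.~\ref{prop: time sep continuous} in place of continuity of the point time-separation $\tau_p$.

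For lower semi-continuity I argue by contradiction. Suppose $v_k \to v$ in $S^+N\Sigma$ with $s_k := s_\Sigma^+(v_k) \to s^* < s_\Sigma^+(v)$; choose $t_k \searrow s^*$ with $t_k > s_k$. A short application of the reverse triangle inequality \eqref{eq: RTI} shows that $\tau_\Sigma(\gamma_{v_k}(t_k)) > t_k$, so by Prop.~\ref{prop: time sep continuous} there is a maximizing $\Sigma$-geodesic $\sigma_k : [0, \ell_k] \to M$ from some $q_k \in \Sigma$ to $\gamma_{v_k}(t_k)$ of length $\ell_k > t_k$. Continuity of $\tau_\Sigma$ together with $\gamma_{v_k}(t_k) \to \gamma_v(s^*)$ forces $\ell_k \to \tau_\Sigma(\gamma_v(s^*)) = s^*$. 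The FCC hypothesis ensures that $\{q_k\}$ eventually lies in the compact set $\overline{J^-(\gamma_v(s^* + 1)) \cap \Sigma}$, so after passing to a subsequence $q_k \to q \in \Sigma$; since $S^+N\Sigma$ is the image of $\Sigma$ under the smooth section $\mathbf{n}_\Sigma$, the initial tangents $w_k := \dot\sigma_k(0)$ also converge, to some $w \in S^+N\Sigma|_q$, and in the limit $\gamma_w|_{[0, s^*]}$ is a maximizing $\Sigma$-geodesic ending at $\gamma_v(s^*)$. If $(q,w) \neq (\pi(v), v)$, then $\gamma_w|_{[0, s^*]}$ and $\gamma_v|_{[0, s^*]}$ are two distinct maximizing $\Sigma$-geodesics meeting at $\gamma_v(s^*)$; concatenating the former with $\gamma_v|_{[s^*, s^* + \epsilon]}$ (which is still maximizing since $s^* < s_\Sigma^+(v)$) and rounding the corner via a first variation argument yields a strictly longer causal curve from $\Sigma$ to $\gamma_v(s^* + \epsilon)$, contradicting $\tau_\Sigma(\gamma_v(s^* + \epsilon)) = s^* + \epsilon$. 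If instead $(q,w) = (\pi(v), v)$, then $\gamma_v|_{[0, s^*]}$ carries no focal point (since $s^* < s_\Sigma^+(v)$), so $\exp^N$ is a local diffeomorphism near $s^* v$; but $\exp^N(t_k v_k) = \exp^N(\ell_k w_k)$ for $k$ large with $t_k v_k \neq \ell_k w_k$ (different norms) and both vectors converging to $s^* v$, contradicting local injectivity.

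For continuity, the case $s_\Sigma^+(v) = \infty$ is immediate from lower semi-continuity. When $s_\Sigma^+(v) < \infty$ and $s_\Sigma^+(v) v \in \mathcal{D}^N$, openness of $\mathcal{D}^N$ places $(s_\Sigma^+(v) + \epsilon) v \in \mathcal{D}^N$ for all sufficiently small $\epsilon > 0$. If some subsequence satisfied $s_\Sigma^+(v_k) \geq s_\Sigma^+(v) + \epsilon$, then $\tau_\Sigma(\gamma_{v_k}(s_\Sigma^+(v) + \epsilon)) = s_\Sigma^+(v) + \epsilon$; passing to the limit via continuity of $\tau_\Sigma$ would give that $\gamma_v|_{[0, s_\Sigma^+(v) + \epsilon]}$ is maximizing, contradicting the definition of $s_\Sigma^+(v)$. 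The main obstacle in this plan is the corner-rounding step in the LSC argument, which needs a careful local construction in a convex normal neighborhood of $\gamma_v(s^*)$ rather than a routine computation; the remaining steps reduce to continuity of $\tau_\Sigma$, the FCC hypothesis, and the focal-point characterization of the maximizing interval.
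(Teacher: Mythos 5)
Your argument is correct and is precisely the adaptation of \citep[Prop.~9.7 and Thm.~9.8]{BEE96} to the hypersurface setting that the paper invokes without writing out: FCC replaces compactness of causal diamonds to extract a convergent sequence of foot points, continuity of $\tau_{\Sigma}$ replaces continuity of $\tau_{p}$, and the endgame is the standard dichotomy between two distinct maximizing normal geodesics meeting at the limit point and a failure of local injectivity of $\exp^{N}$ at a non-focal point. The corner-rounding step you single out as the main obstacle is already subsumed by Prop.~\ref{prop: time sep continuous} (a maximizing curve must be an unbroken geodesic, and the broken concatenation has two distinct tangents at the corner), so no additional local construction is required.
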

\begin{defn}
[Cut locus]The (future) cut locus of $\Sigma$ is defined as the
image of the tangential cut locus under the normal exponential map:
\[
\mathrm{Cut}^{+}(\Sigma):=\left\{ \exp^{N}(s_{\Sigma}^{+}(v)v):\, v\in S^{+}N\Sigma\:\:\mathrm{and}\, s_{\Sigma}^{+}(v)v\in\mathcal{D}^{N}\right\} .
\]

\end{defn}
An important fact is that $\mathrm{Cut}^{+}(\Sigma)$ has measure
zero, is closed and $\exp^{N}|_{J_{T}(\Sigma)^{\circ}}$ (where $J_{T}(\Sigma):=\left\{ tv:\, v\in S^{+}N\Sigma\:\:\mathrm{and}\, t\in[0,s_{\Sigma}^{+}(v))\right\} $)
is a diffeomorphism onto $I^{+}(\Sigma)\setminus\mathrm{Cut}^{+}(\Sigma)$
(\citep[Thm.~3]{TG}).

\section{\label{sec:Comparison-results}Comparison results}

In this section we will briefly review the comparison results from
\citep{TG}. We will generally omit the proofs, but may give a sketch
if it will be helpful later on. First, we need to define the sets
of whose areas respectively volumes will be estimated.
\begin{defn}
[Future spheres and balls] \label{def: spheres and balls}For any
$t>0$ and $A\subset\Sigma$ we define the spheres $S_{A}^{+}(t)$
and balls $B_{A}^{+}(t)$ of time $t$ above $A$ by
\begin{align*}
S_{A}^{+}(t): & =\{p\in I^{+}(\Sigma):\,\exists q\in A\,\mathrm{with}\, d(q,p)=\tau_{\Sigma}(p)=t\}\:\mathrm{and}\\
B_{A}^{+}(t): & =\bigcup_{s\in\left(0,t\right)}S_{A}^{+}(s)
\end{align*}
We also set $\mathcal{I}^{+}(\Sigma):=I^{+}(\Sigma)\setminus\mathrm{Cut}^{+}(\Sigma)$
and 
\begin{align*}
\EuScript S_{A}^{+}(t): & =S_{A}^{+}(t)\setminus\mathrm{Cut}^{+}(\Sigma)\:\:\mathrm{and}\\
\mathcal{B}_{A}^{+}(t): & =B_{A}^{+}(t)\setminus\mathrm{Cut}^{+}(\Sigma).
\end{align*}

\end{defn}
Second, we need appropriate curvature conditions.
\begin{defn}
[Cosmological comparison condition] \label{def: CCC}Let $\kappa,\beta\in\mathbb{R}$.
We say that $\left(M,g,\Sigma\right)$ satisfies the cosmological
comparison condition $CCC(\kappa,\beta)$ if
\begin{enumerate}
\item $\left(M,g\right)$ is a globally hyperbolic spacetime and $\Sigma\subset M$
is a smooth, connected, spacelike, acausal, FCC hypersurface,
\item the mean curvature $H_{\Sigma}$ of $\Sigma$ satisfies $H_{\Sigma}\leq\beta$
and
\item $\mathbf{Ric}(v,v)\geq-\left(n-1\right)\kappa\, g(v,v)$ for all timelike
$v\in TM$.
\end{enumerate}
\end{defn}
Under these assumptions \citep{TG} showed various estimates for mean
curvature, area and volume, comparing them to the respective quantities
in certain warped products $M_{\kappa,\beta}=(a_{\kappa,\beta},b_{\kappa,\beta})\times_{f_{\kappa,\beta}}\Sigma_{\kappa,\beta}$
where $a_{\kappa,\beta},b_{\kappa,\beta}\in\mathbb{R}$, $f_{\kappa,\beta}:(a_{\kappa,\beta},b_{\kappa,\beta})\mapsto\mathbb{R}\setminus\{0\}$
is the warping function and $\Sigma_{\kappa,\beta}$ is the (unique)
simply connected, complete $(n-1)$-dimensional Riemannian manifold
with constant curvature $k_{\kappa,\beta}\in\{-1,0,1\}$ (i.e., $\Sigma_{\kappa,\beta}$
is either hyperbolic space $H^{n-1}$, euclidean space $\mathbb{R}^{n-1}$
or the sphere $S^{n-1}$). These comparison spaces are listed in Table
\ref{tab:Warping-functions-for}. 
\begin{table}

\begin{centering}
\begin{tabular}{cccccc}
\multicolumn{6}{c}{Table for $\kappa<0$}\tabularnewline
\midrule
\midrule 
$\beta$ & $\Sigma_{\kappa,\beta}$ & $c$ & $f_{\kappa,\beta}(t)$ & $\frac{1}{n-1}H_{\kappa,\beta}(t)$ & $b_{\kappa,\beta}$\tabularnewline
\midrule
\midrule 
$\frac{\left|\beta\right|}{(n-1)\sqrt{\left|\kappa\right|}}<1$ & $S^{n-1}$ & $\tanh^{-1}(\frac{\beta}{(n-1)\sqrt{\left|\kappa\right|}})$ & $\frac{1}{\sqrt{\left|\kappa\right|}}\cosh(\sqrt{\left|\kappa\right|}t+c)$ & $\sqrt{\left|\kappa\right|}\tanh(\sqrt{\left|\kappa\right|}t+c)$ & $\infty$\tabularnewline
\midrule
$\frac{\left|\beta\right|}{(n-1)\sqrt{\left|\kappa\right|}}=1$ & $\mathbb{R}^{n-1}$ & $0$ & $\exp(\mathrm{sgn}(\beta)\sqrt{\left|\kappa\right|}t)$ & $\mathrm{sgn}(\beta)\sqrt{\left|\kappa\right|}$ & $\infty$\tabularnewline
\midrule
$\frac{\beta}{(n-1)\sqrt{\left|\kappa\right|}}>1$ & $H^{n-1}$ & $\coth^{-1}(\frac{\beta}{(n-1)\sqrt{\left|\kappa\right|}})$ & $\frac{1}{\sqrt{\left|\kappa\right|}}\sinh(\sqrt{\left|\kappa\right|}t+c)$ & $\sqrt{\left|\kappa\right|}\coth(\sqrt{\left|\kappa\right|}t+c)$ & $\infty$\tabularnewline
\midrule
$\frac{\beta}{(n-1)\sqrt{\left|\kappa\right|}}<-1$ & $H^{n-1}$ & $\coth^{-1}(\frac{\beta}{(n-1)\sqrt{\left|\kappa\right|}})$ & $\frac{1}{\sqrt{\left|\kappa\right|}}\sinh(\sqrt{\left|\kappa\right|}t+c)$ & $\sqrt{\left|\kappa\right|}\coth(\sqrt{\left|\kappa\right|}t+c)$ & $-\frac{c}{\sqrt{\left|\kappa\right|}}$\tabularnewline
\midrule
\midrule 
 &  &  &  &  & \tabularnewline
\multicolumn{6}{c}{Table for $\kappa=0$}\tabularnewline
\midrule
\midrule 
$\beta$ & $\Sigma_{\kappa,\beta}$ & $c$ & $f_{\kappa,\beta}(t)$ & $\frac{1}{n-1}H_{\kappa,\beta}(t)$ & $b_{\kappa,\beta}$\tabularnewline
\midrule
\midrule 
$\beta=0$ & $\mathbb{R}^{n-1}$ & $0$ & $1$ & 0 & $\infty$\tabularnewline
\midrule
$\beta>0$ & $H^{n-1}$ & $\frac{n-1}{\beta}$ & $t+c$ & $\frac{1}{t+c}$ & $\infty$\tabularnewline
\midrule
$\beta<0$ & $H^{n-1}$ & $\frac{n-1}{\beta}$ & $t+c$ & $\frac{1}{t+c}$ & $-\frac{n-1}{\beta}$\tabularnewline
\midrule
\midrule 
 &  &  &  &  & \tabularnewline
\multicolumn{6}{c}{Table for $\kappa>0$}\tabularnewline
\midrule
\midrule 
$\beta$ & $\Sigma_{\kappa,\beta}$ & $c$ & $f_{\kappa,\beta}(t)$ & $\frac{1}{n-1}H_{\kappa,\beta}(t)$ & $b_{\kappa,\beta}$\tabularnewline
\midrule
\midrule 
$\beta>0$ & $H^{n-1}$ & $\cot^{-1}(\frac{\beta}{(n-1)\sqrt{\kappa}})$ & $\frac{1}{\sqrt{\kappa}}\sin(\sqrt{\kappa}t+c)$ & $\sqrt{\kappa}\cot(\sqrt{\kappa}t+c)$ & $\frac{-c+\pi}{\sqrt{\kappa}}$\tabularnewline
\midrule 
$\beta<0$ & $H^{n-1}$ & $\cot^{-1}(\frac{\beta}{(n-1)\sqrt{\kappa}})$ & $\frac{1}{\sqrt{\kappa}}\sin(\sqrt{\kappa}t+c)$ & $\sqrt{\kappa}\cot(\sqrt{\kappa}t+c)$ & $\frac{-c}{\sqrt{\kappa}}$\tabularnewline
\midrule 
$\beta=0$ & $H^{n-1}$ & $\frac{\pi}{2}$ & $\frac{1}{\sqrt{\kappa}}\cos(\sqrt{\kappa}t)$ & $\sqrt{\kappa}\tan(\sqrt{\kappa}t)$ & $\frac{\pi}{2\sqrt{\kappa}}$\tabularnewline
\bottomrule
\end{tabular}
\par\end{centering}

\medskip{}

\caption{\label{tab:Warping-functions-for}Warping functions for different
values of $\kappa,\beta$. The mean curvature is given by $H_{\kappa,\beta}=(n-1)\frac{f'}{f}$
and $b_{\kappa,\beta}$ is the upper bound of the interval containing
zero on which $f_{\kappa,\beta}\neq0$ and $c=c(\kappa,\beta)$ is
a ($\kappa,\beta$ dependent) constant. This table is based on \citep[Table~1]{TG}.}
\end{table}

Now we are ready to state some of the relevant results of \citep{TG}:
Their arguments are based on a comparison result for Riccati equations
from \citep{EH}, which they then adapt to their needs \citep[Thm.\ 6]{TG}.
\begin{thm}
[Riccati comparison] \label{thm: riccati comp}Let $R:\mathbb{R}\to S(E)$
(self-adjoint operators from an $n$-dimensional vector space $E$
into itself) be smooth and assume that $\mathrm{tr}\, R\geq n\kappa$
for some $\kappa\in\mathbb{R}$. Furthermore, let $S:(0,b)\to S(E)$
be a solution of $S'+S^{2}+R=0$, and $s_{\kappa}:(0,b_{\kappa})\to\mathbb{R}$
a solution of $s_{\kappa}'+s_{\kappa}^{2}+\kappa=0$ that can not
be extended beyond $b_{\kappa}$. If $\lim_{t\searrow0}(s_{\kappa}(t)-\frac{1}{n}\mathrm{tr}S)$
exists and is non-negative, then $b\leq b_{\kappa}$ and 
\[
\mathrm{tr}\, S(t)\leq ns_{\kappa}(t)
\]
for all $t\in(0,b)$. Moreover, if equality holds for some $t_{0}\in(0,b)$,
then equality holds for all $t<t_{0}$. In this case, we also have
$S(t)=s_{\kappa}(t)\mathrm{Id}_{E}$ and $R(t)=\kappa\mathrm{Id}_{E}$
for all $t\in(0,t_{0}]$.
\end{thm}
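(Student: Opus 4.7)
The plan is to reduce the matrix Riccati inequality to a scalar one by tracing, and then to apply a Gronwall-type comparison to the scalar problem. Setting $\sigma(t) := \frac{1}{n}\mathrm{tr}\, S(t)$ and tracing the matrix Riccati equation $S' + S^2 + R = 0$ gives $n\sigma' + \mathrm{tr}(S^2) + \mathrm{tr}\, R = 0$. The Cauchy--Schwarz inequality for self-adjoint operators on $E$ gives $\mathrm{tr}(S^2) \geq (\mathrm{tr}\, S)^2/n = n\sigma^2$, with equality exactly when $S$ is a scalar multiple of $\mathrm{Id}_E$. Combined with the curvature hypothesis $\mathrm{tr}\, R \geq n\kappa$, this yields the scalar Riccati differential inequality
\[
\sigma' + \sigma^2 + \kappa \leq 0 \qquad \text{on } (0,b),
\]
in which equality forces both $\mathrm{tr}\, R = n\kappa$ and $S = \sigma\,\mathrm{Id}_E$.

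To compare $\sigma$ with the model solution $s_\kappa$ I would examine the difference $\psi := s_\kappa - \sigma$. Subtracting the Riccati equation for $s_\kappa$ from the above inequality and using $\sigma^2 - s_\kappa^2 = -(s_\kappa+\sigma)\psi$ yields $\psi' + (s_\kappa + \sigma)\psi \geq 0$. On any subinterval $[\epsilon, T] \subset (0, \min(b, b_\kappa))$ the coefficient $s_\kappa + \sigma$ is continuous, so multiplying by the positive integrating factor $\mu(t) := \exp\bigl(\int_\epsilon^t (s_\kappa + \sigma)\bigr)$ yields $(\mu\psi)' \geq 0$, whence $\mu(t)\psi(t) \geq \psi(\epsilon)$ for all $t \in [\epsilon, T]$. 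Sending $\epsilon \searrow 0$ and using the hypothesis $\lim_{t\searrow 0}\psi(t) \geq 0$ then gives $\psi \geq 0$, i.e., $\mathrm{tr}\, S \leq n\,s_\kappa$, on the common domain of definition.

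The bound $b \leq b_\kappa$ follows from the structure of scalar Riccati solutions: any forward-maximal solution of $s_\kappa' = -s_\kappa^2 - \kappa$ defined on $(0, b_\kappa)$ with $b_\kappa < \infty$ must blow down to $-\infty$ at $b_\kappa$, since a large positive value decreases superlinearly through the $-s_\kappa^2$ term while a large negative value drives $s_\kappa' \to -\infty$ and produces finite-time blowup below. Thus if one had $b > b_\kappa$, the already proven $\sigma \leq s_\kappa$ would force $\sigma \to -\infty$ at $b_\kappa$, contradicting the smoothness of $S$ on $(0,b)$. For the rigidity claim, if $\mathrm{tr}\, S(t_0) = n\,s_\kappa(t_0)$ then $\psi(t_0) = 0$, and since $\mu\psi$ is nonnegative and nondecreasing with a zero at $t_0$ it vanishes identically on $(\epsilon, t_0]$; letting $\epsilon \searrow 0$ gives $\psi \equiv 0$ on $(0,t_0]$. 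Equality throughout then saturates the two inequalities used: Cauchy--Schwarz forces $S(t) = \sigma(t)\,\mathrm{Id}_E = s_\kappa(t)\,\mathrm{Id}_E$, and $\mathrm{tr}\, R = n\kappa$ combined with the matrix Riccati equation gives $R(t) = -(s_\kappa'(t) + s_\kappa(t)^2)\,\mathrm{Id}_E = \kappa\,\mathrm{Id}_E$.

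The delicate point I expect is the limit $\epsilon \searrow 0$ in the Gronwall step: $s_\kappa$ and $\sigma$ may individually blow up to $+\infty$ at $t = 0$ (as is typical for Riccati solutions issuing from a focal hypersurface), so the integrating factor $\mu$ cannot be anchored at $t=0$ directly, and only the hypothesis that the \emph{difference} $s_\kappa - \sigma$ has a nonnegative limit makes the comparison survive the passage $\epsilon \searrow 0$. Working on $[\epsilon, T]$ throughout and only letting $\epsilon \searrow 0$ at the end circumvents this issue without requiring any control on $s_\kappa$ or $\sigma$ individually near zero.
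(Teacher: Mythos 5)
Your proof is correct and follows essentially the same route as the source the paper relies on for this statement (the paper gives no proof of its own but quotes \citep[Thm.~6]{TG}, which adapts the Eschenburg--Heintze Riccati comparison): trace plus Cauchy--Schwarz to obtain the scalar inequality $\sigma'+\sigma^2+\kappa\leq0$, an integrating-factor comparison for $\psi=s_\kappa-\sigma$, blow-down of non-global scalar Riccati solutions to get $b\leq b_\kappa$, and saturation of both inequalities for the rigidity. The only step worth making fully explicit is that the limit $\epsilon\searrow0$ goes through because $\mu_\epsilon(t)$ stays bounded away from $0$: $s_\kappa$ cannot tend to $-\infty$ as $t\searrow0$ (backward Riccati solutions are bounded below), and then the finiteness of $\lim_{t\searrow0}\psi$ forces the same for $\sigma$, so $\int_\epsilon^t(s_\kappa+\sigma)$ is bounded below uniformly in $\epsilon$.
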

It is easy to see that the shape operators $\mathbf{S}_{t}=-\nabla\mathrm{grad}\tau_{\Sigma}$
to the level sets $\Sigma_{t}:=\tau_{\Sigma}^{-1}(\{t\})$ satisfy
such a Riccati equation along each timelike geodesic $\gamma$ starting
orthogonally to $\Sigma$ with $R_{t}:\dot{\gamma}(t)^{\perp}\to\dot{\gamma}(t)^{\perp}$
given by $\mathbf{R}(.,\dot{\gamma}(t))\dot{\gamma}(t)$. This leads
to the following result about the mean curvatures $H_{t}:=H_{\Sigma_{t}}$
of the level sets $\Sigma_{t}$ \citep[Thm.\ 7]{TG}:
\begin{thm}
[Mean curvature comparison] \label{thm:mean curv comp}Let $\kappa,\beta\in\mathbb{R}$
and assume that $M$ and $\Sigma\subset M$ satisfy $CCC(\kappa,\beta)$.
Then
\begin{enumerate}
\item For any inextendible unit-speed geodesic $\gamma:[0,a)\to M$ maximizing
the distance to $\Sigma$, one has $H_{t}(\gamma(t))\leq H_{\kappa,H_{\Sigma}(\gamma(0))}(t)$
along $\gamma$, hence $a<b_{\kappa,H_{\Sigma}(\gamma(0))}$
\item For each $q\in\mathcal{I}^{+}(\Sigma)$, we have $\tau_{\Sigma}(q)<b_{\kappa,\beta}$
and $\mathrm{tr}\,\mathbf{S}_{\tau_{\Sigma}(q)}(q)=H_{\tau_{\Sigma}(q)}(q)\leq H_{\kappa,\beta}(\tau_{\Sigma}(q))=\mathrm{tr}\,\mathbf{S}_{\kappa,\beta}(\tau_{\Sigma}(q)).$ 
\item If $H_{\tau_{\Sigma}(q)}(q)=H_{\kappa,\beta}(\tau_{\Sigma}(q))$ and
$\gamma:[0,\tau_{\Sigma}(q)]\to M$ is the (unique unit-speed) geodesic
maximizing the distance from $q$ to $\Sigma$, then even $\mathbf{S}_{t}(\gamma(t))=\frac{1}{n-1}H_{\kappa,\beta}(t)\,\mathrm{id}$
for all $t\in[0,\tau_{\Sigma}(q)]$.
\end{enumerate}
\end{thm}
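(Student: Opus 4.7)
The overall plan is to transport the problem to a Riccati equation on $\dot\gamma(t)^{\perp}$ and feed it into Theorem \ref{thm: riccati comp}. The two ingredients we need are (a) the Riccati equation satisfied by the shape operators $\mathbf{S}_t$ of the level sets of $\tau_\Sigma$, and (b) a scalar comparison solution extracted from the warped product model of Table \ref{tab:Warping-functions-for}.

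For the setup, let $\gamma$ be a unit-speed timelike geodesic starting orthogonally to $\Sigma$ (which is the case for any maximizing $\Sigma$-ray by Proposition \ref{prop: time sep continuous}). On the $(n-1)$-dimensional orthogonal complement $\dot\gamma(t)^{\perp}$, a standard computation (Gauss--Codazzi for the level sets of the distance function, or equivalently the tidal equation for Jacobi fields) shows that $\mathbf{S}_t := -\nabla\mathrm{grad}\,\tau_\Sigma$ obeys $\mathbf{S}_t'+\mathbf{S}_t^{2}+R_t=0$, where $R_t=\mathbf{R}(\cdot,\dot\gamma(t))\dot\gamma(t)|_{\dot\gamma(t)^\perp}$. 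Because $\dot\gamma$ is unit timelike, the CCC Ricci bound gives $\mathrm{tr}\,R_t=\mathbf{Ric}(\dot\gamma,\dot\gamma)\geq(n-1)\kappa$. The initial condition at $t=0$ is $\mathrm{tr}\,\mathbf{S}_0=H_\Sigma(\gamma(0))$.

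For (1), set $\beta':=H_\Sigma(\gamma(0))$ and take the scalar function $s(t):=\tfrac{1}{n-1}H_{\kappa,\beta'}(t)=f_{\kappa,\beta'}'(t)/f_{\kappa,\beta'}(t)$. A direct check using $f_{\kappa,\beta'}''+\kappa f_{\kappa,\beta'}=0$ (which one reads off the table) shows $s'+s^{2}+\kappa=0$, with maximal domain exactly $[0,b_{\kappa,\beta'})$ and $s(0)=\beta'/(n-1)$. The initial matching $s(0)=\tfrac{1}{n-1}\mathrm{tr}\,\mathbf{S}_0$ lets us invoke Theorem \ref{thm: riccati comp} with $E=\dot\gamma(t)^{\perp}$, giving $\mathrm{tr}\,\mathbf{S}_t\leq(n-1)s(t)=H_{\kappa,\beta'}(t)$ and $a\leq b_{\kappa,\beta'}$. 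The strict inequality $a<b_{\kappa,\beta'}$ when $b_{\kappa,\beta'}<\infty$ follows because at $b_{\kappa,\beta'}$ the scalar solution $s$ blows up to $-\infty$, which by the trace bound would force a focal point of $\Sigma$ along $\gamma$ at time $a$, contradicting maximality of $\gamma$ on $[0,a)$.

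For (2), given $q\in\mathcal{I}^{+}(\Sigma)$, Proposition \ref{prop: time sep continuous} yields a unique maximizing unit-speed geodesic $\gamma:[0,\tau_\Sigma(q)]\to M$ from $\Sigma$ to $q$, and since $q\notin\mathrm{Cut}^{+}(\Sigma)$ this geodesic remains maximizing on a slightly larger interval. Applying (1) at $t=\tau_\Sigma(q)$ gives $H_{\tau_\Sigma(q)}(q)\leq H_{\kappa,\beta'}(\tau_\Sigma(q))$ with $\beta':=H_\Sigma(\gamma(0))\leq\beta$. By the standard ODE monotonicity for the autonomous Riccati equation $s'=-s^{2}-\kappa$, solutions are ordered by their initial values, which gives $H_{\kappa,\beta'}(t)\leq H_{\kappa,\beta}(t)$ and correspondingly $b_{\kappa,\beta'}\leq b_{\kappa,\beta}$; chaining the inequalities yields both conclusions of (2). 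For (3), the hypothesis $H_{\tau_\Sigma(q)}(q)=H_{\kappa,\beta}(\tau_\Sigma(q))$ together with the chain in (2) forces equality throughout, in particular $\beta'=\beta$ and $\mathrm{tr}\,\mathbf{S}_{\tau_\Sigma(q)}(q)=(n-1)s(\tau_\Sigma(q))$ at some interior time of the Riccati comparison; the rigidity clause of Theorem \ref{thm: riccati comp} then gives $\mathbf{S}_t(\gamma(t))=s(t)\,\mathrm{id}=\tfrac{1}{n-1}H_{\kappa,\beta}(t)\,\mathrm{id}$ for all $t\in[0,\tau_\Sigma(q)]$.

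The main obstacle is purely bookkeeping: one has to carefully dimension-shift in Theorem \ref{thm: riccati comp} (the vector space $E$ is $(n-1)$-dimensional, not $n$-dimensional, so the comparison bound is $(n-1)\kappa$ rather than $n\kappa$), verify that each $f_{\kappa,\beta}$ in Table \ref{tab:Warping-functions-for} truly satisfies $f''+\kappa f=0$ with the correct initial conditions, and keep straight the monotonicity of $H_{\kappa,\cdot}$ and $b_{\kappa,\cdot}$ in $\beta$. The geometric content, by contrast, is essentially read off from Theorem \ref{thm: riccati comp}.
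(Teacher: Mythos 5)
Your proposal is correct and follows exactly the route the paper itself indicates: the paper does not prove this theorem but cites it as [TG, Thm.~7], prefaced only by the remark that the shape operators $\mathbf{S}_{t}=-\nabla\mathrm{grad}\,\tau_{\Sigma}$ of the level sets satisfy the Riccati equation $\mathbf{S}_{t}'+\mathbf{S}_{t}^{2}+R_{t}=0$ along each maximizing geodesic starting orthogonally to $\Sigma$, to which Theorem \ref{thm: riccati comp} is then applied. Your write-up is a correct fleshing-out of that sketch, including the points that actually need care: the dimension count $\dim E=n-1$, the verification that $f_{\kappa,\beta}'/f_{\kappa,\beta}$ solves the scalar Riccati equation with initial value $\beta/(n-1)$, and the monotonicity of $H_{\kappa,\cdot}$ and $b_{\kappa,\cdot}$ in $\beta$ used to pass from the pointwise bound $\beta'=H_{\Sigma}(\gamma(0))$ to the uniform bound $\beta$ in parts (2) and (3).
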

Not stated explicitly in \citep{TG} is an immediate corollary we
will use later on.
\begin{cor}
\label{cor: tau<b on I+}Actually $\tau_{\Sigma}(q)<b_{\kappa,\beta}$
for all $q\in I^{+}(\Sigma)$.\end{cor}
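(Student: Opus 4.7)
The plan is to bootstrap from Theorem~\ref{thm:mean curv comp}(2), which already gives the desired strict inequality on $\mathcal{I}^{+}(\Sigma)=I^{+}(\Sigma)\setminus\mathrm{Cut}^{+}(\Sigma)$, to the whole of $I^{+}(\Sigma)$. The case $b_{\kappa,\beta}=\infty$ is vacuous given the finiteness of $\tau_{\Sigma}$ provided by Proposition~\ref{prop: time sep continuous}, so I may focus on $b_{\kappa,\beta}<\infty$.

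Fix $q\in I^{+}(\Sigma)$. The idea is to push $q$ strictly into the chronological future, land in $\mathcal{I}^{+}(\Sigma)$, and then transfer the resulting bound back to $q$ via the reverse triangle inequality. Since $I^{+}(q)$ is open and non-empty, while $\mathrm{Cut}^{+}(\Sigma)$ is closed of measure zero and hence has empty interior, one can pick $q'\in I^{+}(q)\setminus\mathrm{Cut}^{+}(\Sigma)$. Transitivity of $\ll$ together with $q\in I^{+}(\Sigma)$ forces $q'\in I^{+}(\Sigma)$, so in fact $q'\in\mathcal{I}^{+}(\Sigma)$.

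By the reverse triangle inequality~(\ref{eq: RTI}) and the fact that $q\ll q'$ implies $\tau_{q}(q')>0$, one obtains the strict inequality
\[
\tau_{\Sigma}(q)<\tau_{\Sigma}(q)+\tau_{q}(q')\leq\tau_{\Sigma}(q').
\]
Combining this with Theorem~\ref{thm:mean curv comp}(2) applied at $q'\in\mathcal{I}^{+}(\Sigma)$ yields $\tau_{\Sigma}(q)<\tau_{\Sigma}(q')<b_{\kappa,\beta}$, as claimed. No substantive obstacle is anticipated: the argument uses only the closedness and measure-zero-ness of $\mathrm{Cut}^{+}(\Sigma)$ together with the previously established interior case.
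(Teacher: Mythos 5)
Your proof is correct. It pursues the same basic strategy as the paper --- transferring the strict bound from $\mathcal{I}^{+}(\Sigma)$ (Thm.~\ref{thm:mean curv comp}(2)) to all of $I^{+}(\Sigma)$ --- but the transfer mechanism differs slightly. The paper first uses density of $\mathcal{I}^{+}(\Sigma)$ in $I^{+}(\Sigma)$ together with continuity of $\tau_{\Sigma}$ to get the non-strict bound $\tau_{\Sigma}\leq b_{\kappa,\beta}$, and then excludes equality by a contradiction: at a hypothetical point with $\tau_{\Sigma}(q)=b_{\kappa,\beta}$ it extends the maximizing geodesic and uses the reverse triangle inequality to manufacture a point with $\tau_{\Sigma}>b_{\kappa,\beta}$. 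You instead argue directly: since $I^{+}(q)$ is open and non-empty while $\mathrm{Cut}^{+}(\Sigma)$ has measure zero and hence empty interior, you find $q'\in\mathcal{I}^{+}(\Sigma)$ strictly to the chronological future of $q$ and chain $\tau_{\Sigma}(q)<\tau_{\Sigma}(q)+\tau_{q}(q')\leq\tau_{\Sigma}(q')<b_{\kappa,\beta}$ via (\ref{eq: RTI}). This buys you a one-step proof that needs neither the continuity of $\tau_{\Sigma}$ nor the existence and extendibility of a maximizing geodesic to $q$; both arguments ultimately rest on the same ingredients (the interior case, the reverse triangle inequality, and the structure of the cut locus), just deployed in a different order. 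The preliminary case split at $b_{\kappa,\beta}=\infty$ is harmless but not needed, since your main argument works uniformly.
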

\begin{proof}
From $\tau_{\Sigma}(q)<b_{\kappa,\beta}$ for any $q\in\mathcal{I}^{+}(\Sigma)$
it follows from density of $\mathcal{I}^{+}(\Sigma)$ in $I^{+}(\Sigma)$
that $\tau_{\Sigma}(q)\leq b_{\kappa,\beta}$ for any $q\in I^{+}(\Sigma)$.
Now assume there exists a $q\in I^{+}(\Sigma)$ with$\tau_{\Sigma}(q)=b_{\kappa,\beta}$
and let $\gamma:[0,b_{\kappa,\beta}]\to M$ be a geodesic maximizing
the distance from $\Sigma$ to $q$. By extending this geodesic we
get a point $q'\in I^{+}(\Sigma)$ with $\tau_{\Sigma}(q')>b_{\kappa,\beta}$,
arriving at a contradiction.
\end{proof}
Next \citep{TG} use a standard result on the variation of area (see
\citep[Ch.~2]{Simon_GeomMeasure}).
\begin{prop}
[First variation of area] \label{prop:first var of area}Let $K\subset\EuScript S_{t}$
be compact and let $\varepsilon>0$ be such that the flow, $\Phi$,
of $\mathbf{n}$ is defined on $[-\varepsilon,\varepsilon]\times K$.
Set $K_{s}:=\Phi_{s}(K)\subset\EuScript S_{t+s}$ for each $s\in[-\varepsilon,\varepsilon]$.
Then
\begin{equation}
\left.\frac{d}{ds}\right|_{s=0}\mathrm{area}K_{s}=\int_{K}\mathrm{tr}\,\mathbf{S}_{t}d\mu_{t},\label{eq:var of area}
\end{equation}
where $\mu_{t}$ denotes the Riemannian volume measure on $\EuScript S_{t}$
induced by $g$.
\end{prop}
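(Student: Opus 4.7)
The plan is to reduce the claim to the standard Riemannian first variation of area computation, which applies cleanly here because each level set $\mathcal{S}_t\subset\mathcal{I}^+(\Sigma)$ is a smooth spacelike hypersurface with a Riemannian induced metric. First I would identify the unit normal: since $\tau_\Sigma$ is smooth on $\mathcal{I}^+(\Sigma)$ with $g(\mathrm{grad}\,\tau_\Sigma,\mathrm{grad}\,\tau_\Sigma)=-1$, the field $\mathbf{n}:=-\mathrm{grad}\,\tau_\Sigma$ is the future-pointing timelike unit normal to the level sets, its integral curves are precisely the unit-speed $\Sigma$-maximizing geodesics, and the shape operator compatible with the paper's sign convention is $\mathbf{S}_t=\nabla\mathbf{n}$ (matching $\mathbf{S}_t=-\nabla\mathrm{grad}\,\tau_\Sigma$). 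In particular $\mathbf{S}_t$ is self-adjoint on each $T_p\mathcal{S}_t$ because $\nabla\mathbf{n}$ is (minus) the Hessian of $\tau_\Sigma$.

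Next I would use $\Phi$ to pull everything back to $K$: writing $g(s):=\Phi_s^*\bigl(g|_{T\mathcal{S}_{t+s}\times T\mathcal{S}_{t+s}}\bigr)$ as a family of Riemannian metrics on $K$, a change of variables gives
\begin{equation*}
\mathrm{area}\,K_s=\int_K\sqrt{\det g(s)/\det g(0)}\,d\mu_t .
\end{equation*}
The standard Jacobi-type identity $\frac{d}{ds}\log\det g(s)=\mathrm{tr}_{g(s)}(g(s)^{-1}g'(s))$ then reduces the problem to computing $g'(0)$.

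For tangent vectors $X,Y$ to $\mathcal{S}_t$ one has $g'(0)(X,Y)=(\mathcal{L}_{\mathbf{n}}g)(X,Y)=g(\nabla_X\mathbf{n},Y)+g(X,\nabla_Y\mathbf{n})=2g(\mathbf{S}_t X,Y)$, using self-adjointness in the last step. Taking the trace with respect to the Riemannian metric $g(0)$ on $\mathcal{S}_t$ yields $\mathrm{tr}_{g(0)}g'(0)=2\,\mathrm{tr}\,\mathbf{S}_t$, so differentiating under the integral produces
\begin{equation*}
\left.\frac{d}{ds}\right|_{s=0}\mathrm{area}\,K_s=\int_K \tfrac{1}{2}\,\mathrm{tr}_{g(0)}g'(0)\,d\mu_t=\int_K\mathrm{tr}\,\mathbf{S}_t\,d\mu_t,
\end{equation*}
which is the claim. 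The differentiation under the integral is justified by smoothness of $\Phi$ and compactness of $K$, and the only place care is needed is the sign bookkeeping coming from $\mathbf{n}=-\mathrm{grad}\,\tau_\Sigma$ (past-pointing gradient in Lorentzian signature) — no genuine obstacle, since the computation is purely tensorial on the spacelike tangent space. All of this is the standard first variation formula from \citep[Ch.~2]{Simon_GeomMeasure} transplanted verbatim to the induced Riemannian structure on $\mathcal{S}_t$, so beyond fixing conventions there is essentially nothing to do.
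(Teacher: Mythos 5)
Your computation is correct and is exactly the standard first variation argument; the paper itself gives no proof of this proposition, simply citing \citep[Ch.~2]{Simon_GeomMeasure}, and your write-up (pulling back the induced metrics by the flow, differentiating $\log\det$, and identifying $\tfrac12\mathcal{L}_{\mathbf{n}}g=g(\mathbf{S}_t\cdot,\cdot)$ on the spacelike tangent spaces) is precisely the argument that reference supplies. The sign bookkeeping with $\mathbf{n}=-\mathrm{grad}\,\tau_\Sigma$ also matches the paper's convention $\mathbf{S}_t=-\nabla\mathrm{grad}\,\tau_\Sigma$.
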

This allows them to proof the following area comparison theorem \citep[Thm.~8]{TG}.
\begin{thm}
[Area comparison]\label{thm:area comp} Let $\kappa,\beta\in\mathbb{R}$
and assume that $\left(M,g,\Sigma\right)$ satisfies $CCC(\kappa,\beta)$.
Then, for any  $B\subset\Sigma_{\kappa,\beta}$ with finite, non-zero
measure and any measurable $A\subset\Sigma$, the function 
\[
t\mapsto\frac{\mathrm{area}\,\EuScript S_{A}^{+}(t)}{\mathrm{area}_{\kappa,\beta}S_{B}^{+}(t)},
\]
is nonincreasing on $[0,b_{\kappa,\beta})$. Further, for $t\searrow0$
this function converges to $\frac{\mathrm{area}A}{\mathrm{area}_{\kappa,\beta}B}$,
so
\[
\mathrm{area}\,\EuScript S_{A}^{+}(t)\leq\frac{\mathrm{area}_{\kappa,\beta}S_{B}^{+}(t)}{\mathrm{area}_{\kappa,\beta}B}\,\mathrm{area}A.
\]
\end{thm}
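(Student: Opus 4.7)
The plan is to parametrize $\EuScript S_A^+(t)$ via the normal exponential map and reduce the area estimate to a pointwise Jacobian comparison on $\Sigma$. Since $\exp^N|_{J_T(\Sigma)^\circ}$ is a diffeomorphism onto $\mathcal{I}^+(\Sigma)$, for each $t \in (0, b_{\kappa,\beta})$ the map $\phi_t(p) := \exp^N(t\,\mathbf{n}_\Sigma(p))$ is a smooth diffeomorphism from the set $\{p \in \Sigma : s_\Sigma^+(\mathbf{n}_\Sigma(p)) > t\}$ (which is open by lower semi-continuity of $s_\Sigma^+$, Lemma \ref{lem:The-cut-function is continuous}) onto an open subset of $\Sigma_t \cap \mathcal{I}^+(\Sigma)$. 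In particular $\phi_t(A(t)) = \EuScript S_A^+(t)$ for the measurable set $A(t) := A \cap \{p \in \Sigma : s_\Sigma^+(\mathbf{n}_\Sigma(p)) > t\}$, and the family $\{A(t)\}_t$ is monotonically decreasing in $t$.

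Next I would establish a pointwise Jacobian comparison. Let $J(t,p)$ denote the Jacobian of $\phi_t$ with respect to the Riemannian volume measures induced by $g$ on $\Sigma$ and on $\Sigma_t$. The Jacobi-field argument underlying the first-variation formula in Proposition \ref{prop:first var of area} --- applied at the level of the integrand rather than integrated over a compact $K$ --- yields
\[
\partial_t \log J(t,p) \;=\; \operatorname{tr}\mathbf{S}_t(\phi_t(p)) \;=\; H_t(\phi_t(p))
\]
for $t \in [0, s_\Sigma^+(\mathbf{n}_\Sigma(p)))$. Combining with the mean-curvature comparison Theorem \ref{thm:mean curv comp}(2), and using that $H_{\kappa,\beta}(t) = (n-1)\,f'_{\kappa,\beta}(t)/f_{\kappa,\beta}(t) = \partial_t \log f_{\kappa,\beta}(t)^{n-1}$, gives
\[
\partial_t \log \frac{J(t,p)}{f_{\kappa,\beta}(t)^{n-1}} \;\leq\; 0,
\]
so $t \mapsto J(t,p)/f_{\kappa,\beta}(t)^{n-1}$ is non-increasing on $[0, s_\Sigma^+(\mathbf{n}_\Sigma(p)))$ for each fixed $p$.

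Integrating and using $A(t_2) \subseteq A(t_1)$ for $t_1 \leq t_2$ then gives the desired monotonicity:
\[
\frac{\mathrm{area}\,\EuScript S_A^+(t_2)}{f_{\kappa,\beta}(t_2)^{n-1}} \;=\; \int_{A(t_2)} \frac{J(t_2,p)}{f_{\kappa,\beta}(t_2)^{n-1}}\,d\mathrm{vol}_\Sigma(p) \;\leq\; \int_{A(t_1)} \frac{J(t_1,p)}{f_{\kappa,\beta}(t_1)^{n-1}}\,d\mathrm{vol}_\Sigma(p) \;=\; \frac{\mathrm{area}\,\EuScript S_A^+(t_1)}{f_{\kappa,\beta}(t_1)^{n-1}}.
\]
In the warped product $M_{\kappa,\beta}$ one has $\mathrm{area}_{\kappa,\beta}\,S_B^+(t) = (f_{\kappa,\beta}(t)/f_{\kappa,\beta}(0))^{n-1}\,\mathrm{area}_{\kappa,\beta}\,B$, so the ratio in the statement is non-increasing. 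As $t \searrow 0$, monotone convergence --- using that $A(t) \nearrow A$ up to a $\Sigma$-null set and $J(t,p) \to J(0,p) = 1$ --- yields $\mathrm{area}\,\EuScript S_A^+(t) \to \mathrm{area}\,A$, while $\mathrm{area}_{\kappa,\beta}\,S_B^+(t) \to \mathrm{area}_{\kappa,\beta}\,B$ is immediate, giving the stated limit and the final inequality.

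The main obstacle will be justifying the pointwise identity $\partial_t \log J(t,p) = H_t(\phi_t(p))$ rigorously, which requires unpacking the proof of Proposition \ref{prop:first var of area} at the level of its integrand (via the Jacobi equation along the normal geodesic from $p$), together with the measurability considerations for $A(t)$ needed to apply change of variables when $A$ is only assumed measurable; the lower semi-continuity of $s_\Sigma^+$ is exactly what makes these sets well-behaved.
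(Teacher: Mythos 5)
Your proof is correct, and it rests on the same core input as the paper's: the logarithmic derivative of the area element along the normal flow equals $H_t$, which is bounded above by $H_{\kappa,\beta}(t)$ by Theorem \ref{thm:mean curv comp}. Where you genuinely differ is in how the integral is organized. The paper's (sketched) proof works downstream on the slices: it applies the integrated first variation of area (Proposition \ref{prop:first var of area}) to compact pieces, and deals with non-compact $A$ and the cut locus by exhausting $\EuScript S_{A}^{+}(t_{2})$ with compact sets $K_{i}$ and flowing them \emph{backward} to $t_{1}$, so that the domains of integration never shrink in the direction of the estimate. You instead pull everything back to $\Sigma$ via the normal exponential map and run a Bishop--Gromov style pointwise Jacobian comparison, absorbing the cut locus into the monotonically shrinking sets $A(t)=A\cap\{s_{\Sigma}^{+}(\mathbf{n}_{\Sigma}(\cdot))>t\}$, which are measurable because $\{s_{\Sigma}^{+}>t\}$ is open by Lemma \ref{lem:The-cut-function is continuous}. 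Your route handles merely measurable $A$ and the $t\searrow0$ limit more transparently (monotone convergence applies directly to the increasing family of integrands as $t\searrow0$), at the cost of having to justify the pointwise identity $\partial_{t}\log J(t,p)=H_{t}(\phi_{t}(p))$ --- which you correctly flag as the main technical point, and which is the standard formula $\frac{d}{dt}\log\det\mathcal{A}(t)=\operatorname{tr}(\mathcal{A}'\mathcal{A}^{-1})=\operatorname{tr}\mathbf{S}_{t}$ for the Jacobi tensor $\mathcal{A}$ along the normal geodesic, i.e.\ exactly the unintegrated content of Proposition \ref{prop:first var of area}. One small bookkeeping point: $f_{\kappa,\beta}$ can be negative on the relevant interval (e.g.\ $\kappa<0$ with $\beta<-(n-1)\sqrt{|\kappa|}$), so the comparison area element should be written as $\left(f_{\kappa,\beta}(t)/f_{\kappa,\beta}(0)\right)^{n-1}$ or $|f_{\kappa,\beta}(t)|^{n-1}$ throughout; since $f_{\kappa,\beta}$ has constant sign on $(a_{\kappa,\beta},b_{\kappa,\beta})$ this does not affect the argument.
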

\begin{proof}
We will only give a sketch here. If $A$ is compact and the flow $\Phi$
of the unit normal vector field $\mathbf{n}$ is defined on $[0,b_{\kappa,\beta})$,
then $\EuScript S_{A}^{+}(t)=\Phi_{t}(A)$ and one can use Prop.\
\ref{prop:first var of area} and Thm.\ \ref{thm:mean curv comp}
to calculate
\begin{equation}
\frac{d}{dt}\log(\mathrm{area}\,\EuScript S_{A}^{+}(t))=\frac{1}{\mathrm{area}\,\EuScript S_{A}^{+}(t)}\int_{\EuScript S_{A}^{+}(t)}H_{t}(q)d\mu_{t}(q)\leq H_{\kappa,\beta}(t)=\frac{d}{dt}\log(\mathrm{area}_{\kappa,\beta}S_{B}^{+}(t)),\label{eq:log area}
\end{equation}
proving the assertion. If this is not the case, one looks at $0<t_{1}<t_{2}<b_{\kappa,\beta}$
and a sequence of compact sets $K_{i}\subset\EuScript S_{A}^{+}(t_{2})$
with $\mathrm{area}K_{i}\nearrow\mathrm{area}\,\EuScript S_{A}^{+}(t_{2})$
and uses the sets $K_{i}(t):=\Phi_{t-t_{2}}(K_{i})$ (for $t\in[0,t_{2}]$)
instead of $\EuScript S_{A}^{+}(t)$ in (\ref{eq:log area}).
\end{proof}
The co-area formula (note that $\mathrm{Cut}^{+}(\Sigma)$ has measure
zero) implies
\begin{equation}
\mathrm{vol}B_{A}^{+}(t)=\int_{0}^{t}\mathrm{area}\EuScript S_{A}^{+}(\tau)d\tau,\label{eq:coarea formula}
\end{equation}
and some basic analysis regarding integrals of functions with a non-increasing
quotient gives \citep[Thm.~9]{TG}:
\begin{thm}
[Volume comparison]\label{thm:vol comp}Let $\kappa,\beta\in\mathbb{R}$
and assume $\left(M,g,\Sigma\right)$ satisfies $CCC(\kappa,\beta)$.
Then, for any  $B\subset\Sigma_{\kappa,\beta}$ with finite, non-zero
measure and any measurable $A\subset\Sigma$, the function 
\[
t\mapsto\frac{\mathrm{vol}B_{A}^{+}(t)}{\mathrm{vol}_{\kappa,\beta}B_{B}^{+}(t)},
\]
is nonincreasing. Further, for $t\searrow0$ this function converges
to $\frac{\mathrm{area}A}{\mathrm{area}_{\kappa,\beta}B}$, so
\begin{equation}
\mathrm{vol}B_{A}^{+}(t)\leq\frac{\mathrm{vol}_{\kappa,\beta}B_{B}^{+}(t)}{\mathrm{area}_{\kappa,\beta}B}\,\mathrm{area}A.\label{eq:vol leq sth}
\end{equation}

\end{thm}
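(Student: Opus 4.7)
The plan is to reduce the volume estimate to the area estimate of Theorem \ref{thm:area comp} via the co-area formula (\ref{eq:coarea formula}), then finish with an elementary monotonicity lemma for integrals of functions whose pointwise ratio is non-increasing. Writing $\varphi(\tau):=\mathrm{area}\,\EuScript S_{A}^{+}(\tau)$ and $\psi(\tau):=\mathrm{area}_{\kappa,\beta}\,S_{B}^{+}(\tau)$, Theorem \ref{thm:area comp} provides that $\varphi/\psi$ is non-increasing on $(0,b_{\kappa,\beta})$ with $\lim_{\tau\searrow 0}\varphi(\tau)/\psi(\tau)=L:=\mathrm{area}\,A/\mathrm{area}_{\kappa,\beta}B$; note $\psi>0$ throughout since $f_{\kappa,\beta}>0$ on $(0,b_{\kappa,\beta})$ and $\mathrm{area}_{\kappa,\beta}B>0$. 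The co-area formula identifies $\Phi(t):=\int_{0}^{t}\varphi\,d\tau=\mathrm{vol}\,B_{A}^{+}(t)$ and $\Psi(t):=\int_{0}^{t}\psi\,d\tau=\mathrm{vol}_{\kappa,\beta}B_{B}^{+}(t)$, so the theorem reduces to showing that $\Phi/\Psi$ is non-increasing with limit $L$ at $0$.

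The main (essentially only) step is the following lemma: if $\varphi/\psi$ is non-increasing on $(0,b_{\kappa,\beta})$ and $\psi>0$, then so is $\Phi/\Psi$. I would prove this directly. For $0<s<t<b_{\kappa,\beta}$, the desired inequality $\Phi(t)\Psi(s)\leq\Phi(s)\Psi(t)$ simplifies, after splitting each integral at $s$ and cancelling $\Phi(s)\Psi(s)$, to
\[
\Bigl(\int_{s}^{t}\varphi(\sigma)\,d\sigma\Bigr)\Bigl(\int_{0}^{s}\psi(\tau)\,d\tau\Bigr)\leq\Bigl(\int_{0}^{s}\varphi(\tau)\,d\tau\Bigr)\Bigl(\int_{s}^{t}\psi(\sigma)\,d\sigma\Bigr),
\]
which I would obtain by integrating the pointwise inequality $\varphi(\sigma)\psi(\tau)\leq\varphi(\tau)\psi(\sigma)$ (a restatement of the non-increase of $\varphi/\psi$ at $\tau\leq s\leq\sigma$, after multiplying through by $\psi(\tau)\psi(\sigma)>0$) over the product region $(\tau,\sigma)\in(0,s)\times(s,t)$.

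The limit $\Phi(t)/\Psi(t)\to L$ as $t\searrow 0$ then follows by a standard squeeze: given $\varepsilon>0$, monotonicity supplies $\delta>0$ with $(L-\varepsilon)\psi(\tau)\leq\varphi(\tau)\leq(L+\varepsilon)\psi(\tau)$ on $(0,\delta)$, and integrating over $(0,t)$ for $t<\delta$ and dividing by $\Psi(t)>0$ transfers the same bound to $\Phi(t)/\Psi(t)$. Combined with monotonicity of $\Phi/\Psi$, this yields $\Phi(t)/\Psi(t)\leq L$ on $(0,b_{\kappa,\beta})$, which is exactly the displayed inequality (\ref{eq:vol leq sth}). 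The degenerate case $\mathrm{area}\,A=0$ is immediate since then Theorem \ref{thm:area comp} already forces $\varphi\equiv 0$ and both sides of (\ref{eq:vol leq sth}) vanish. I do not expect any genuine obstacle; the only care needed is in the clean algebraic manipulation of the ratio lemma, which is the same mechanism underlying Bishop--Gromov-type volume comparisons.
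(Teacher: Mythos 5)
Your proposal is correct and follows exactly the route the paper indicates: reduce to the area comparison via the co-area formula (\ref{eq:coarea formula}) and then apply the elementary monotonicity lemma for ratios of integrals (the ``basic analysis regarding integrals of functions with a non-increasing quotient'' that the paper delegates to \citep[Thm.~9]{TG}). Your explicit product-region integration of $\varphi(\sigma)\psi(\tau)\leq\varphi(\tau)\psi(\sigma)$ and the squeeze argument for the limit are precisely the details being cited, so there is nothing further to add.
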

A similar result has also recently been shown for $\mathcal{C}^{1,1}$-metrics
(\citep{G2016}).

Moving away from the hypersurface case for a moment we will also need
a comparison theorem for the d'Alembertian of the distance function
to a point. This seems to be a well known result (see, e.g., \citep[Eq.~(14.29)]{BEE96}
for $\kappa=0$, the proof of \citep[Prop.~4.9]{AGH1997} for $\kappa<0$
or \citep[Thm.~3.3.5]{Treude_Diplomarbeit}). 
\begin{thm}
\label{thm:box of distance funct}Assume $M$ is globally hyperbolic
and its timelike Ricci curvature is bounded from below by $\kappa\in\mathbb{R}$.
Fix $p\in M$. Then for any $q\in I^{+}(p)\setminus\mathrm{Cut}^{+}(p)$
we have
\begin{equation}
-\Box\tau_{p}(q)\leq(n-1)s_{\kappa}(\tau_{p}(q)),\label{eq:box of distance}
\end{equation}
where
\begin{equation}
s_{\kappa}(t):=\begin{cases}
\sqrt{\kappa}\cot(\sqrt{\kappa}t) & \kappa>0\\
\frac{1}{t} & \kappa=0\\
\sqrt{\left|\kappa\right|}\coth(\sqrt{\left|\kappa\right|}t) & \kappa<0
\end{cases}.\label{eq:def s kappa}
\end{equation}
\end{thm}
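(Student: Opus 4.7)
The plan is to identify $-\Box\tau_p$ with the mean curvature of the level sets of $\tau_p$ and then apply the Riccati comparison theorem (Thm.\ \ref{thm: riccati comp}) to the shape operators of these level sets along the maximal geodesic from $p$ to $q$, using a point-like (rather than hypersurface-like) singular initial condition at $t=0$.

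First I would verify that, on $I^{+}(p)\setminus\mathrm{Cut}^{+}(p)$, $\tau_{p}$ is smooth with $g(\mathrm{grad}\,\tau_{p},\mathrm{grad}\,\tau_{p})=-1$, so $N:=-\mathrm{grad}\,\tau_{p}$ is a future-directed unit timelike field. Since $N$ is the velocity of the unique maximal timelike geodesics issuing from $p$, one has $\nabla_{N}N=0$, and $N$ is the future unit normal to the level sets $\Sigma_{t}:=\tau_{p}^{-1}(t)$. With the paper's convention $\mathbf{S}_{t}:=\nabla N$, and using that $\nabla_{X}N\in N^{\perp}$ for $X\in N^{\perp}$, the splitting $TM=N^{\perp}\oplus\mathbb{R}N$ gives
\[
-\Box\tau_{p}=\mathrm{div}\,N=\mathrm{tr}(\nabla N)=\mathrm{tr}\,\mathbf{S}_{t}.
\]

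Next, along the (unique) maximal unit-speed geodesic $\gamma:[0,\tau_{p}(q)]\to M$ from $p$ to $q$, the restriction of $\mathbf{S}_{t}$ to $E_{t}:=\dot{\gamma}(t)^{\perp}$ satisfies the Riccati equation $\mathbf{S}_{t}'+\mathbf{S}_{t}^{2}+R_{t}=0$ with $R_{t}X=\mathbf{R}(X,\dot{\gamma})\dot{\gamma}$ on the $(n-1)$-dimensional space $E_{t}$, and the Ricci bound gives $\mathrm{tr}\,R_{t}=\mathbf{Ric}(\dot{\gamma},\dot{\gamma})\geq(n-1)\kappa$ since $\dot{\gamma}$ is unit timelike. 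To feed this into Thm.\ \ref{thm: riccati comp} I must match the initial condition at $p$. Here I would use the standard Jacobi-tensor calculation in normal coordinates: the orthogonal Jacobi tensor $\mathcal{A}(t)$ along $\gamma$ with $\mathcal{A}(0)=0$ and $\mathcal{A}'(0)=\mathrm{id}$ satisfies $\mathcal{A}''+R\mathcal{A}=0$, so by Taylor expansion $\mathcal{A}(t)=t\,\mathrm{id}+O(t^{3})$, whence $\mathbf{S}_{t}=\mathcal{A}'\mathcal{A}^{-1}=\tfrac{1}{t}\mathrm{id}+O(t)$ and $\tfrac{1}{n-1}\mathrm{tr}\,\mathbf{S}_{t}=\tfrac{1}{t}+O(t)$ as $t\searrow 0$. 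Each of the comparison functions in \eqref{eq:def s kappa} also satisfies $s_{\kappa}(t)=\tfrac{1}{t}+O(t)$ near zero, so $\lim_{t\searrow 0}\bigl(s_{\kappa}(t)-\tfrac{1}{n-1}\mathrm{tr}\,\mathbf{S}_{t}\bigr)=0$, which is non-negative as required.

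All hypotheses of Thm.\ \ref{thm: riccati comp} are then in force (with its $n$ equal to $n-1$ here), and the theorem yields $\mathrm{tr}\,\mathbf{S}_{t}(t)\leq(n-1)s_{\kappa}(t)$ along $\gamma$. Evaluating at $t=\tau_{p}(q)$ and combining with the identification $-\Box\tau_{p}=\mathrm{tr}\,\mathbf{S}_{t}$ from the first step gives the desired inequality. The main obstacle, in contrast to the hypersurface version encoded in Thm.\ \ref{thm:mean curv comp}, is the singular initial data: both $\mathrm{tr}\,\mathbf{S}_{t}$ and $s_{\kappa}$ diverge at $t=0$, and one has to verify that their leading-order $1/t$ blow-ups cancel. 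The Jacobi-tensor asymptotic $\mathcal{A}(t)=t\,\mathrm{id}+O(t^{3})$ handles exactly this, and is really the only delicate point of the argument.
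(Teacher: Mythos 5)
Your proposal is correct and follows essentially the same route as the paper: identify $-\Box\tau_{p}$ with $\mathrm{tr}\,\mathbf{S}_{t}$ along the maximal geodesic, check that the singular initial data match those of $s_{\kappa}$ (the paper simply says this is ``seen by looking at Minkowski space,'' which your Jacobi-tensor expansion $\mathcal{A}(t)=t\,\mathrm{id}+O(t^{3})$ makes precise), and invoke Thm.~\ref{thm: riccati comp}. Your treatment of the initial condition is in fact more careful than the paper's sketch.
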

\begin{proof}
As in Thm.\ \ref{thm:mean curv comp} we have that along a maximizing,
unit speed geodesic $\gamma$ from $p$ to $q$ the function $f(t):=-\Box\tau_{p}(\gamma(t))=\mathrm{tr}\,\mathbf{S}_{\tau_{p}^{-1}(t)}(\gamma(t))$
is smooth ($q\notin\mathrm{Cut}^{+}(p)$) and satisfies
\[
f'+\frac{f^{2}}{2}\leq(n-1)\kappa\;\mathrm{and}\;(s_{\kappa}(t)-\frac{1}{n-1}f(t))\to0\,\mathrm{as}\, t\searrow0,
\]
where the limiting behavior is seen by looking at Minkowski space.
This gives (\ref{eq:box of distance}) by Thm.\ \ref{thm: riccati comp}.
\end{proof}

\section{\label{sec:Maximal-injectivity-radius}Maximality in the injectivity
radius}

In the next three sections we will investigate manifolds $(M,g,\Sigma$)
satisfying $CCC(\kappa,\beta)$ which are in a sense maximal with
respect to the bounds on distance, area and volume from Thm.\ \ref{thm:mean curv comp}-\ref{thm:vol comp}
implied by the curvature. 

The first (and simplest) involves maximality in the $\Sigma$-injectivity
radius of $M$ and although this seems to be a somewhat well-known
fact, we will nevertheless provide a detailed proof.
\begin{defn}
The future $\Sigma$-injectivity radius $\mathrm{inj}_{\Sigma}^{+}(M)$
is defined as the infimum over the future cut parameter of points
in $\Sigma$, i.e., 
\[
\mathrm{inj}_{\Sigma}^{+}(M):=\inf_{p\in\Sigma}s_{\Sigma}^{+}(p).
\]

\end{defn}
Note that $\exp^{N}|_{(0,\mathrm{inj}_{\Sigma}^{+}(M))\cdot S^{+}N\Sigma}$
will be a diffeomorphism onto $B_{\Sigma}^{+}(\mathrm{inj}_{\Sigma}^{+}(M))\setminus\mathrm{Cut}^{+}(\Sigma)$.

If $(M,g,\Sigma$) satisfies $CCC(\kappa,\beta)$ for some $\kappa,\beta$,
then Cor.\ \ref{cor: tau<b on I+} shows that $\tau_{\Sigma}(q)<b_{\kappa,\beta}$
for all $q\in I^{+}(\Sigma)$, which in turn implies $\mathrm{inj}_{\Sigma}^{+}(M)\leq b_{\kappa,\beta}$.
We will now show
\begin{thm}
[Maximal injectivity radius rigidity] \label{thm: max injectivity radius}Let
$(M,g)$ be globally hyperbolic and assume that $\left(M,g,\Sigma\right)$
satisfies $CCC(\kappa,\beta)$ with constants $\kappa,\beta$ such
that either $\kappa>0$ or $\beta\leq-(n-1)\sqrt{\left|\kappa\right|}$.
If $\mathrm{inj}_{\Sigma}^{+}(M)=b_{\kappa,\beta}$, then $I^{+}(\Sigma)$
is isometric to the warped product 
\begin{equation}
I^{+}(\Sigma)\cong(0,b_{\kappa,\beta})\times_{f_{\kappa,\beta}}(\Sigma,\frac{1}{f_{\kappa,\beta}(0)^{2}}\, g|_{\Sigma}).\label{eq:I+ isom max inj rad}
\end{equation}
\end{thm}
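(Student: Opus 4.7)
My plan is to combine the injectivity-radius hypothesis with the Riccati rigidity of Theorem~\ref{thm: riccati comp} so as to force pointwise equality in the mean-curvature comparison of Theorem~\ref{thm:mean curv comp}; the warped-product structure will then follow by integrating the identity $\partial_t h_t = 2\,g(\mathbf{S}_t\,\cdot\,,\,\cdot\,)$ in Fermi coordinates.

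First I would show that $s_\Sigma^+(v)=b_{\kappa,\beta}$ for every $v\in S^+N\Sigma$. The hypothesis gives $s_\Sigma^+(v)\ge b_{\kappa,\beta}$ directly, while the reverse inequality follows from Corollary~\ref{cor: tau<b on I+}: for any $t<s_\Sigma^+(v)$ one has $\tau_\Sigma(\gamma_v(t))=t<b_{\kappa,\beta}$. In particular the whole slab lies in $J_T(\Sigma)^\circ$ and $\mathrm{Cut}^+(\Sigma)\cap I^+(\Sigma)=\emptyset$, so the map
\[
\Phi\colon(0,b_{\kappa,\beta})\times\Sigma\to I^+(\Sigma),\qquad \Phi(t,p):=\exp^N\bigl(t\,\mathbf{n}_\Sigma(p)\bigr),
\]
is a local diffeomorphism and injective. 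Surjectivity onto $I^+(\Sigma)$ follows because Proposition~\ref{prop: time sep continuous} and Corollary~\ref{cor: tau<b on I+} provide, for every $q\in I^+(\Sigma)$, a point $p\in\Sigma$ with $q=\Phi(\tau_\Sigma(q),p)$ and $\tau_\Sigma(q)<b_{\kappa,\beta}$.

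The main step is the pointwise identity $\mathbf{S}_t(\gamma(t))=\frac{f'_{\kappa,\beta}(t)}{f_{\kappa,\beta}(t)}\,\mathrm{id}$ along every $\gamma=\gamma_v$. Writing $u(t):=\mathrm{tr}\,\mathbf{S}_t(\gamma(t))/(n-1)$, the Riccati equation together with the timelike Ricci bound yields the scalar inequality $u'\le -u^2-\kappa$, while $CCC(\kappa,\beta)$ gives $u(0)\le\beta/(n-1)$. Assume for contradiction $u(t_0)<H_{\kappa,\beta}(t_0)/(n-1)$ at some $t_0\in[0,b_{\kappa,\beta})$. I would then compare $u$ on $[t_0,b_{\kappa,\beta})$ to the maximal solution $\sigma$ of the \emph{equation} $\sigma'=-\sigma^2-\kappa$ with $\sigma(t_0)=u(t_0)$: a standard Grönwall argument applied to $w=u-\sigma$ gives $u\le\sigma$ on $[t_0,b_{\kappa,\beta})$, and monotone dependence of the blow-up time of this ODE on its initial datum forces $\sigma\to-\infty$ at some time $t_*<b_{\kappa,\beta}$. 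This is precisely the case-distinction $\kappa>0$ or $\beta\le-(n-1)\sqrt{|\kappa|}$: in each sub-case the comparison solution $H_{\kappa,\beta}/(n-1)$ is the \emph{last} Riccati solution that stays finite on $[0,b_{\kappa,\beta})$, and any strict defect in the initial condition or at an interior point produces an earlier blow-up, hence a focal point of $\gamma$ strictly inside $[0,b_{\kappa,\beta})$, contradicting $s_\Sigma^+(v)=b_{\kappa,\beta}$. Therefore $(n-1)u\equiv H_{\kappa,\beta}$ along every $\gamma_v$; in particular $H_\Sigma\equiv\beta$, and the rigidity clause of Theorem~\ref{thm: riccati comp} upgrades this scalar equality to the operator equality $\mathbf{S}_t=\frac{f'_{\kappa,\beta}(t)}{f_{\kappa,\beta}(t)}\,\mathrm{id}$.

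Finally, in the Fermi chart $\Phi$ the metric decomposes as $g=-dt^2+h_t$ with $\partial_t h_t=2\,g(\mathbf{S}_t\,\cdot\,,\,\cdot\,)$, and the scalar proportionality from the previous step integrates to $h_t=\bigl(f_{\kappa,\beta}(t)/f_{\kappa,\beta}(0)\bigr)^{2}\,g|_\Sigma$, which is precisely (\ref{eq:I+ isom max inj rad}). I expect the principal obstacle to be the Riccati blow-up step: Theorem~\ref{thm: riccati comp} only asserts that equality at a single point propagates \emph{backwards}, so one cannot merely invoke it; instead one has to convert the absence of focal points on the whole interval $[0,b_{\kappa,\beta})$ into full equality, via the monotonicity of the Riccati blow-up time in the initial datum. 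The case hypothesis $\kappa>0$ or $\beta\le-(n-1)\sqrt{|\kappa|}$ enters exactly here, because it is what makes $H_{\kappa,\beta}/(n-1)$ the unique Riccati solution with maximal existence interval $[0,b_{\kappa,\beta})$ among those with initial datum $\le\beta/(n-1)$.
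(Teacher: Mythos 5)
Your proposal is correct and follows essentially the same route as the paper: empty cut locus makes $\exp^N$ a global diffeomorphism, a restarted Riccati comparison plus the strict monotonicity of the blow-up time in the initial datum (which is exactly where the hypothesis $\kappa>0$ or $\beta\le-(n-1)\sqrt{|\kappa|}$ enters) forces $H_t\equiv H_{\kappa,\beta}(t)$ by contradiction with the absence of focal points on $[0,b_{\kappa,\beta})$, the Riccati rigidity clause upgrades this to $\mathbf{S}_t=\frac{f'_{\kappa,\beta}}{f_{\kappa,\beta}}\,\mathrm{id}$, and integrating $\partial_t h_{ij}=2\frac{f'_{\kappa,\beta}}{f_{\kappa,\beta}}h_{ij}$ yields the warped product. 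The only cosmetic difference is that you phrase the key step as monotone dependence of the scalar Riccati blow-up time on the initial datum, whereas the paper phrases it as strict monotonicity of $\beta\mapsto b_{\kappa,\beta}$ read off from Table \ref{tab:Warping-functions-for}; these are the same fact.
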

\begin{proof}
Since $\mathrm{inj}_{\Sigma}^{+}(M)=b_{\kappa,\beta}$ and $\tau_{\Sigma}(p)<b_{\kappa,\beta}$
for all $p\in I^{+}(\Sigma)$ one has $\mathrm{Cut}^{+}(\Sigma)=\emptyset$
and hence the normal exponential map is a diffeomorphism $\exp^{N}:\,(0,b_{\kappa,\beta})\cdot S^{+}N\Sigma\to I^{+}(\Sigma)$.
Identifying $(0,b_{\kappa,\beta})\cdot S^{+}N\Sigma$ with $(0,b_{\kappa,\beta})\times\Sigma$
in the usual way and pulling back the metric $g$ on $I^{+}(\Sigma)$
we obtain a metric $\bar{g}$ on $(0,b_{\kappa,\beta})\times\Sigma$
that is of the form $\bar{g}=-dt^{2}+h(t,x)$, where $h(t,.)$ denotes
the induced Riemannian metric on the time slice $\{t\}\times\Sigma$.
It remains to show that $\bar{g}=-dt^{2}+\frac{f_{\kappa,\beta}(t)^{2}}{f_{\kappa,\beta}(0)^{2}}\, h_{ij}(0,x)$.

Next we show that $S_{t}(q)=\frac{f'_{\kappa,\beta}(t)}{f_{\kappa,\beta}(t)}\,\mathrm{id}$
for all $t<b_{\kappa,\beta}$ and $q\in S_{\Sigma}^{+}(t)=\EuScript S_{\Sigma}^{+}(t)$.
From Thm.\ \ref{thm:mean curv comp} we know that it suffices to
show that $\frac{1}{n-1}\, H_{t}(q)=\frac{1}{n-1}H_{\kappa,\beta}(t)=\frac{f'_{\kappa,\beta}(t)}{f_{\kappa,\beta}(t)}$.
Assume to the contrary that there exists $q_{0}\in S_{\Sigma}^{+}(t_{0})$
with $\tilde{\beta}:=H_{t}(\gamma(t_{0}))<H_{\kappa,\beta}(t_{0})=:\beta_{t_{0}}$
and let $\gamma$ be the unique geodesic $\gamma$ starting orthogonally
to $\Sigma$ with $\gamma(t_{0})=q_{0}$ (any such curve maximizes
the distance due to $\mathrm{Cut}^{+}(\Sigma)=\emptyset$). Then starting
the Riccati comparison argument not at $\gamma(0)$ but at $\gamma(t_{0})$
(note that $\Sigma_{t_{0}}$ is again a smooth, acausal, spacelike,
FCC hypersurface) we see that $H_{\Sigma_{t}}(\gamma(t-t_{0}))\leq H_{\kappa,\tilde{\beta}}(t-t_{0})$
for $t>t_{0}$. Looking at table \ref{tab:Warping-functions-for}
(or Thm.\ \ref{thm: riccati comp} and \ref{thm:mean curv comp})
we see that $H_{\kappa,\tilde{\beta}}(t-t_{0})\to-\infty$ for $t-t_{0}\nearrow b_{\kappa,\tilde{\beta}}$
and that the map $\beta\to b_{\kappa,\beta}$ is strictly increasing
on $\mathbb{R}$ for $\kappa<0$ and on $(-\infty,-(n-1)\sqrt{\left|\kappa\right|}]$
for $\kappa\leq0$, hence in all cases we are considering one has
$b_{\kappa,\tilde{\beta}}<b_{\kappa,\beta_{t_{0}}}$. Using that $f_{\kappa,\beta_{t_{0}}}(t-t_{0})=f_{\kappa,\beta}(t)$
by uniqueness of solutions of ODE we see that $b_{\kappa,\beta_{t_{0}}}=b_{\kappa,\beta}-t_{0}$.
This gives $b_{\kappa,\tilde{\beta}}<b_{\kappa,\beta}-t_{0}$, i.e.,
$H_{t}(\gamma(t))\to-\infty$ for $t\nearrow b_{\kappa,\tilde{\beta}}+t_{0}<b_{\kappa,\beta}$
, which contradicts $\gamma$ not having a focal point before $b_{\kappa,\beta}$.

Now (\ref{eq:I+ isom max inj rad}) follows from 
\begin{multline}
\frac{d}{dt}h_{ij}(t,x)=\frac{d}{dt}g(\partial_{x_{i}},\partial_{x_{j}})=\nabla_{\partial_{t}}(g(\partial_{x_{i}},\partial_{x_{j}}))=g(\nabla_{\partial_{t}}\partial_{x_{i}},\partial_{x_{j}})+g(\partial_{x_{i}},\nabla_{\partial_{t}}\partial_{x_{j}})=\\
=g(\nabla_{\partial_{x_{i}}}\partial_{t},\partial_{x_{j}})+g(\partial_{x_{i}},\nabla_{\partial_{x_{j}}}\partial_{t})=g(S_{i}^{k}(t,x)\partial_{x_{k}},\partial_{x_{j}})+g(\partial_{x_{i}},S_{j}^{k}(t,x)\partial_{x_{k}})=2\frac{f'_{\kappa,\beta}(t)}{f_{\kappa,\beta}(t)}h_{ij}(t,x),\label{eq:curv calc}
\end{multline}
as the solution of this equation is given by $h_{ij}(t,x)=\frac{h_{ij}(0,x)}{f_{\kappa,\beta}(0)^{2}}\, f_{\kappa,\beta}(t)^{2}$.\end{proof}
\begin{rem}
As mentioned above this result in itself is not surprising. One can
find a related result in \citep[Thm.~5.3]{AnderssonHoward} and similar
calculations also appear in \citep{eschenburg1988}. In general, if
$\Sigma$ is a spacelike hypersurface in $M$ the normal exponential
map is defined on $(0,\mathrm{inj}_{\Sigma}^{+}(M))\cdot S^{+}N\Sigma$,
$\exp^{N}((0,\mathrm{inj}_{\Sigma}^{+}(M))\cdot S^{+}N\Sigma)\cong(0,\mathrm{inj}_{\Sigma}^{+}(M))\times\Sigma$
and the induced metric on $(0,\mathrm{inj}_{\Sigma}^{+}(M))\times\Sigma$
is adapted to this product structure (as defined by \citep[Def.~5.1]{AnderssonHoward}):
For any $p\in\Sigma$ the curve $t\mapsto c_{p}(t):=(t,p)\cong\exp^{N}(t\mathbf{n}_{p})$
is a unit speed geodesic which shows that $g$ is locally of the form
$-dt^{2}+\sum_{i,j=1}^{n-1}g_{ij}(t,x)dx_{i}dx_{j}$. So in the case
of a maximal $\Sigma$-injectivity radius, while it remains to actually
calculate the $g_{ij}(t,x)$, one gets an ``almost'' warped product
for free. This will no longer be the case if one looks at maximality
in the volume as will be done in section \ref{sec:Maximal-volume-rigidity}.\end{rem}
\begin{example}
\label{ex: other kappa beta do not work}For $\kappa\geq0$ and $\beta>-(n-1)\sqrt{\left|\kappa\right|}$
the analogue of Thm.\ \ref{thm: max injectivity radius} is false:
Obviously the warped product spacetimes $M_{\kappa,\tilde{\beta}}:=(a_{\kappa,\beta},\infty)\times_{f_{\kappa,\tilde{\beta}}}(\Sigma,\frac{1}{f_{\kappa,\tilde{\beta}}(0)}h)$
with $\tilde{\beta}\in[\beta,-(n-1)\sqrt{\left|\kappa\right|})$ satisfy
$CCC(\kappa,\beta)$, $\mathrm{inj}_{\Sigma}^{+}(M_{\kappa,\tilde{\beta}})=\infty$
and $g_{\kappa,\tilde{\beta}}|_{\Sigma}=h$ but they are not isometric
to $M_{\kappa,\beta}$ unless $\tilde{\beta}=\beta$.
\end{example}

\section{\label{sec:A-splitting-theorem for maximal ray}A splitting theorem
for hypersurfaces with a maximal ray}

The goal of this section is to show that one does not need $\mathrm{inj}_{\Sigma}^{+}(M)=b_{\kappa,\beta}$
to obtain a splitting result and that indeed the existence of only
one $\Sigma$-ray of length $b_{\kappa,\beta}$ is sufficient. As
mentioned in the introduction the proof will be a rather straightforward
combination of arguments from \citep{eschenburg1988}, \citep{galloway1989_3},
\citep{GALLOWAY1989_4} and \citep{AGH1997}. 
\begin{defn}
[(Maximal length) $\Sigma$-rays] Let $\Sigma\subset M$ be an acausal
subset. A timelike future inextendible unit-speed geodesic $\gamma:[0,a)\to M$
is called a $\Sigma$-ray if $\gamma(0)\in\Sigma$ and $\gamma$ maximizes
distance to $\Sigma$, i.e., $L(\gamma|_{[0,t]})=\tau_{\Sigma}(\gamma(t))$
for all $t\in[0,a)$. If $(M,g,\Sigma)$ satisfies $CCC(\kappa,\beta)$
we say a $\Sigma$-ray $\gamma$ has \emph{maximal length }if $a=b_{\kappa,\beta}$.
\end{defn}
To any ray one can define asymptotes:
\begin{defn}
[Asymptotes] For $p\in M$ we call an inextendible geodesic $\alpha_{p}:[0,\bar{a})\to M$
an asymptote to the ($\Sigma$-)ray $\gamma:[0,a)\to M$ at $p$ if
$\alpha_{p}(0)=p$ and $\dot{\alpha}_{p}(0)=\lim_{n\to\infty}\dot{\alpha}_{p,s_{n}}(0)$
for some sequence $s_{n}\to a$, where $\alpha_{p,s}$ denotes the
maximizing unit speed geodesic from $p$ to $\gamma(s)$. So $\alpha_{p}$
arises as a limit curve of a sequence $\alpha_{p,s_{n}}$ of maximizing
curves from $p$ to $\gamma(s_{n})$ as $s_{n}\to a$.
\end{defn}
Given a ray $\gamma:[0,a)\to M$ we define the Busemann function
$b$ associated to this ray. 
\begin{defn}
[Busemann function] Given a $\Sigma$-ray $\gamma:[0,a)\to M$ one
defines its \emph{Busemann function} $b$ as the limit
\begin{equation}
b(x):=\lim_{r\to a}r-\tau_{x}(\gamma(r))\label{eq:def busem}
\end{equation}
for $x\in I^{-}(\gamma)\cap I^{+}(\Sigma)$. \end{defn}
\begin{rem}
That this limit actually exists is seen as follows: By the reverse
triangle inequality (\ref{eq: RTI}) one has $\tau_{x}(\gamma(r))\geq\tau_{x}(\gamma(s))+r-s$
for $r\geq s\geq r_{0}$ with $r_{0}$ such that $x\in I^{-}(\gamma(r_{0}))$
so the map $r\mapsto r-\tau_{x}(\gamma(r))$ is monotonously decreasing
and using $\tau_{x}(\gamma(r))\leq\tau_{\gamma(0)}(\gamma(r))-\tau_{\Sigma}(x)$
it is easy to see that $r-\tau_{x}(\gamma(r))\geq\tau_{\Sigma}(x)$
for all $x\in I^{-}(\gamma)\cap I^{+}(\Sigma)$. This also shows 
\begin{equation}
b(x)\geq\tau_{\Sigma}(x).\label{eq:busem geq dist}
\end{equation}

\end{rem}
Before we summarize the most important facts about the Busemann function
in the following Proposition we need one more definition.

We say that a set $N\subset M$ in a spacetime $(M,g)$ is \emph{edgeless
}if for all $p\in N$ and all neighborhoods $V$ of $p$ in $M$ any
timelike curve from $I^{-}(p,V)$ to $I^{+}(p,V)$ must meet $N$.
 The following definition was introduced in \citep{EG1992}.
\begin{defn}
[$\mathcal{C}^0$ spacelike hypersurface] A subset $N\subset M$ of
a spacetime $(M,g)$ is called \emph{$\mathcal{C}^{0}$ spacelike
hypersurface} if for each $p\in N$ there is a neighborhood $U$ of
$p$ in $M$ such that $N\cap U$ is acausal and edgeless in $U$.
Note that this implies that $N$ is a topological hypersurface by
\citep[Prop.~14.25]{ONeill_SRG}.\end{defn}
\begin{prop}
[Properties of the Busemann function] \label{prop:properties of buseman}Let
$(M,g)$ be a globally hyperbolic spacetime, $\Sigma\subset M$ an
acausal, FCC, spacelike hypersurface and $\gamma:[0,a)\to M$ a $\Sigma$-ray.
Then for any $t\in(0,a)$ there is a neighborhood $U$ of $\gamma(t)$
(called a \emph{nice neighborhood}) such that the following holds:
\begin{enumerate}
\item The Busemann function $b$ is continuous on $U$ and if $q\in J^{+}(p)$
one has
\begin{equation}
b(q)\geq b(p)+\tau_{p}(q)\label{eq:b increasing}
\end{equation}

\item For any given Riemannian background metric $h$ there exists a constants
$C$ and $t<T<a$ such that for any maximizing geodesic $\alpha_{p,s}$
from a point $p\in U$ to $\gamma(s)$ with $s\geq T$ 
\begin{equation}
h(\dot{\alpha}_{p,s}(0),\dot{\alpha}_{p,s}(0))\leq C,\label{eq:blubbbbbbbb}
\end{equation}
i.e., the set $\{\dot{\alpha}_{p,s}(0):p\in U,T\leq s<a\}\subset TM$
is contained in a compact set.
\item For any $p\in U$ there exists a timelike, unit-speed asymptote $\alpha_{p}:[0,a-b(p))\to M$
at $p$ that is future inextendible, maximizing, and satisfies
\begin{equation}
b(\alpha_{p}(t))=t+b(p).\label{eq:b along asymptote}
\end{equation}

\item The level set $N_{t}:=\{x\in U:b(x)=t\}$ of $b$ in $U$ is edgeless
and acausal, i.e., a $\mathcal{C}^{0}$ spacelike hypersurface in
$U$.
\end{enumerate}
\end{prop}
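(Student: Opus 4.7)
Choose $U$ to be a relatively compact open neighborhood of $\gamma(t)$ with $\bar U\subset I^{+}(\Sigma)\cap I^{-}(\gamma(T_{0}))$ for some $t<T_{0}<a$; this is possible since both sets are open and contain $\gamma(t)$. For (1), the monotonicity \eqref{eq:b increasing} is an immediate consequence of the reverse triangle inequality \eqref{eq: RTI}: for $r$ large enough that $q\ll\gamma(r)$, one has $\tau_{p}(\gamma(r))\geq\tau_{p}(q)+\tau_{q}(\gamma(r))$, hence $r-\tau_{p}(\gamma(r))\leq(r-\tau_{q}(\gamma(r)))-\tau_{p}(q)$, and letting $r\to a$ gives the claim. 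For continuity, the functions $f_{r}(x):=r-\tau_{x}(\gamma(r))$ are continuous in $x$ (since $\tau$ is continuous on globally hyperbolic spacetimes), monotonically decreasing in $r$ (again by \eqref{eq: RTI}), and converge pointwise to $b$ on $\bar U$; Dini's theorem then gives uniform convergence, whence $b$ is continuous on $U$.

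For (2), the compact set $K:=J^{+}(\bar U)\cap J^{-}(\gamma(T_{0}))$ contains every $\alpha_{p,s}$ with $p\in\bar U$ and $T\leq s\leq T_{0}$, and the associated initial velocities form a relatively compact subset of $TM$ by a standard limit-curve argument: if a sequence $\dot{\alpha}_{p_{n},s_{n}}(0)$ had $h$-norm tending to infinity, normalizing yields (after passing to a subsequence) an $h$-unit limit vector $v_{\infty}$ at some $p_{\infty}\in\bar U$ with $g(v_{\infty},v_{\infty})=0$; reparametrizing and applying the limit-curve lemma would then produce a null limit curve, contradicting the fact that the $\alpha_{p_{n},s_{n}}$ are unit-speed timelike with length $\tau_{p_{n}}(\gamma(s_{n}))\geq\tau_{p_{n}}(\gamma(T))$ uniformly bounded below by $\inf_{p\in\bar U}\tau_{p}(\gamma(T))>0$. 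For sequences with $s_{n}\to a$, the same argument runs in the expanding compact sets $J^{+}(\bar U)\cap J^{-}(\gamma(T_{0,n}))$ with $T_{0,n}\nearrow a$.

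For (3), using (2) extract a subsequence so that $\dot{\alpha}_{p,s_{n}}(0)\to v$ as $s_{n}\to a$; since each such vector is future-pointing and $g$-unit, so is $v$, and $\alpha_{p}:=\gamma_{v}$ is future-directed unit-speed timelike. The limit-curve lemma shows $\alpha_{p}$ is maximizing on its parameter range and future inextendible (the endpoints $\gamma(s_{n})$ escape every compact set, since $\gamma$ is future inextendible). Since $r-\tau_{p}(\gamma(r))\to b(p)$, one has $L(\alpha_{p,s_{n}})=\tau_{p}(\gamma(s_{n}))\to a-b(p)$, fixing the maximizing domain as $[0,a-b(p))$. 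The identity \eqref{eq:b along asymptote} has its $\geq$ direction from (1) applied to $p$ and $\alpha_{p}(t)$; for the $\leq$ direction, the segment of $\alpha_{p,s_{n}}$ from parameter $t$ to its endpoint gives $\tau_{\alpha_{p,s_{n}}(t)}(\gamma(s_{n}))\geq\tau_{p}(\gamma(s_{n}))-t$, so
\[
b(\alpha_{p,s_{n}}(t))\leq s_{n}-\tau_{\alpha_{p,s_{n}}(t)}(\gamma(s_{n}))\leq t+(s_{n}-\tau_{p}(\gamma(s_{n})))\longrightarrow t+b(p)
\]
as $n\to\infty$, and continuity of $b$ at $\alpha_{p}(t)$ (from (1)) finishes the argument.

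For (4), edgelessness follows from continuity of $b$ combined with strict monotonicity along timelike curves: by \eqref{eq:b increasing}, $b<t$ on $I^{-}(y)$ and $b>t$ on $I^{+}(y)$ for any $y\in N_{t}$, so any timelike curve from $I^{-}(y,V)$ to $I^{+}(y,V)$ meets $N_{t}$ by the intermediate value theorem. For acausality, suppose $x\neq y\in N_{t}$ with $y\in J^{+}(x)$. If $y\in I^{+}(x)$, \eqref{eq:b increasing} gives $b(y)>t$, a contradiction. Otherwise $y$ is null-related to $x$: pick $s_{0}>0$ so small that the asymptote from (3) satisfies $\alpha_{y}(s_{0})\in I^{+}(y)$; then $x\leq y\ll\alpha_{y}(s_{0})$, and since the broken null-then-timelike path from $x$ to $\alpha_{y}(s_{0})$ cannot coincide with any maximizing geodesic (geodesics have constant causal character), the reverse triangle inequality is strict: $\tau_{x}(\alpha_{y}(s_{0}))>\tau_{x}(y)+\tau_{y}(\alpha_{y}(s_{0}))=0+s_{0}$. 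Applying \eqref{eq:b increasing} with $p=x$, $q=\alpha_{y}(s_{0})$ together with \eqref{eq:b along asymptote} at $y$ yields $t+s_{0}=b(\alpha_{y}(s_{0}))\geq b(x)+\tau_{x}(\alpha_{y}(s_{0}))>t+s_{0}$, the desired contradiction. The main obstacle is making the compactness/limit-curve steps in (2) fully rigorous, especially the case $s_{n}\to a$ in which the endpoints leave every compact set; once that is set up, the rest unwinds mechanically.
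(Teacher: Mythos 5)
There is a genuine gap in your argument for the continuity of $b$ in part (1). You invoke Dini's theorem for the decreasing family $f_{r}(x)=r-\tau_{x}(\gamma(r))$, but Dini's theorem \emph{presupposes} that the pointwise limit is continuous --- it upgrades pointwise to uniform convergence only under that hypothesis, so the argument is circular. What you actually get for free from a decreasing limit of continuous functions is upper semicontinuity of $b$; lower semicontinuity is the hard half, and it genuinely requires geometric input. This is why the paper defers to \citep[Lem.~3.3]{eschenburg1988}, where (Lipschitz) continuity of $b$ is deduced \emph{from} the uniform velocity bound of part (2): the estimate \eqref{eq:blubbbbbbbb} shows the maximizers $\alpha_{p,s}$ are uniformly timelike near $U$, which gives a uniform Lipschitz bound on $x\mapsto\tau_{x}(\gamma(s))$ (hence on $f_{s}$ and its infimum $b$). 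Your logical order --- proving (1) before (2) --- cannot work; continuity must come after, or together with, the velocity estimate.

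Relatedly, part (2) is exactly where you concede the difficulty, and the concession is not cosmetic: the whole point of \eqref{eq:blubbbbbbbb} is a bound $C$ \emph{uniform in $s$ up to $a$}, while your curves $\alpha_{p,s_{n}}$ with $s_{n}\to a$ leave every compact set, so ``running the same argument in expanding compact sets'' does not by itself produce a single $C$. Eschenburg's Lemma 3.2 obtains the bound by a direct quantitative use of the reverse triangle inequality (comparing $\tau_{p}(\gamma(s))$ with $\tau_{p'}(\gamma(s))$ for $p'$ displaced slightly in a fixed timelike direction, and using that $\tau_{p}(\gamma(s))$ grows linearly in $s$), not by a soft limit-curve contradiction. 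Your parts (3) and (4) are essentially the arguments of the paper (and of \citep[Lem.~2.3]{galloway1989_3}) and are fine modulo the above: the corner argument for acausality and the two-sided estimate for \eqref{eq:b along asymptote} are correct, but both use the continuity of $b$ whose proof is the missing piece.
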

\begin{proof}
This was shown in \citep{eschenburg1988}, \citep{galloway1989_3}
and \citep{GALLOWAY1989_4}, see specifically \citep[Lem.~3.3]{eschenburg1988}
for Lipschitz continuity, \citep[Lem.~3.2]{eschenburg1988} for the
estimate (\ref{eq:blubbbbbbbb}) and \citep[Lem.~2.3]{galloway1989_3}
for the properties of $N_{t}$. The existence of timelike, unit speed
asymptotes follows from (\ref{eq:blubbbbbbbb}). By a standard result
about the length functional regarding limits of curves contained in
a common compact set one then has$\limsup_{s\to a}L(\alpha_{p,s}|_{[0,t]})\leq L(\alpha_{p}|_{[0,t]})$
(for any $t>0$ such that there exists $s_{0}$ such that $\alpha_{p,s}$
is well defined on $[0,t]$ for $s_{0}<s<a$). This shows that the
asymptote is maximzing and has length at least $\limsup_{s\to a}L(\alpha_{p,s})=\lim_{s\to a}\tau_{p}(\gamma(s))=\lim_{s\to a}\left(s-(s-\tau_{p}(\gamma(s)))\right)=a-b(p)$.
Finally, because $\gamma(s)\to\infty$ (i.e., leaves every compact
set) for $s\to a$ the asymptote $\alpha_{p}:[0,a-b(p))\to M$ is
inextendible. Equations (\ref{eq:b increasing}) and (\ref{eq:b along asymptote})
are immediate consequences of the reverse triangle inequality (\ref{eq: RTI}). 

The statement is also included in \citep{AGH1997}. Note that all
of this is independent of any curvature assumptions. 
\end{proof}
The main argument we use from \citep{AGH1997} will be a theorem about
$\mathcal{C}^{0}$ spacelike hypersurfaces with curvature bounds.
Given two $\mathcal{C}^{0}$ spacelike hypersurfaces in $(M,g)$ which
meet at a point $q$ we say that $N_{0}$ is locally to the future
of $N_{1}$ near $q$ if they meet at $q$ and for some neighborhood
$U$ of $q$ in which $N_{1}$ is acausal and edgeless, $N_{0}\cap U\subset J^{+}(N_{1},U)$.
Now one can define mean curvature bounds of such a $\mathcal{C}^{0}$
spacelike hypersurface as follows
\begin{defn}
Let $N$ be a $\mathcal{C}^{0}$ spacelike hypersurface in the spacetime
$(M,g)$ and $H_{0}$ a constant. Then
\begin{enumerate}
\item $N$ has \emph{mean curvature $\leq H_{0}$ in the sense of support
hypersurfaces} if for all $q\in N$ and $\varepsilon>0$ there is
a $\mathcal{C}^{2}$ future support hypersurface $S_{q,\varepsilon}$
(i.e., $q\in S_{q,\varepsilon}$ and $S_{q,\varepsilon}$ is locally
to the future of $N$ near $q$) such that
\[
H_{S_{q,\varepsilon}}(q)\leq H_{0}+\varepsilon.
\]

\item $N$ has \emph{mean curvature $\geq H_{0}$ in the sense of support
hypersurfaces with one-sided Hessian bounds} if for all compact sets
$K\subset N$ there exists a compact set $\hat{K}\subset TM$ and
a constant $C>0$ such that for all $q\in K$ there is a $\mathcal{C}^{2}$
past support hypersurface $P_{q,\varepsilon}$ (i.e., $q\in P_{q,\varepsilon}$
and $P_{q,\varepsilon}$ is locally to the past of $N$ near $q$)
such that the future pointing unit normal $\mathbf{n}_{P_{q,\varepsilon}}(q)$
is in $\hat{K}$, the second fundamental form $h_{P_{q,\varepsilon}}$
satisfies 
\begin{equation}
h_{P_{q,\varepsilon}}(q)\geq-C_{K}g|_{P_{q,\varepsilon}}(q)\label{eq:sec fund form hessian bounds}
\end{equation}
and
\[
H_{P_{q,\varepsilon}}(q)\geq H_{0}-\varepsilon.
\]

\end{enumerate}
\end{defn}
This definition was introduced in \citep{AGH1997} and allows them
to prove a Lorentzian geometric maximum principle for $\mathcal{C}^{0}$
spacelike hypersurfaces. 
\begin{thm}
[Lorentzian Geometric Maximum Principle] \label{thm:geom max princ}Let
$N_{0}$ and $N_{1}$ be $\mathcal{C}^{0}$ spacelike hypersurfaces
in a spacetime $(M,g)$ which meet at a point $q_{0}$, such that
$N_{0}$ is locally to the future of $N_{1}$ near $q_{0}$. Assume
for some constant $H_{0}$:
\begin{enumerate}
\item $N_{0}$ has mean curvature $\leq H_{0}$ in the sense of support
hypersurfaces and
\item $N_{1}$ has mean curvature $\geq H_{0}$ in the sense of support
hypersurfaces with one-sided Hessian bounds.
\end{enumerate}
Then $N_{0}=N_{1}$ near $q_{0}$, i.e., there is a neighborhood $U$
of $q_{0}$ such that $N_{0}\cap U=N_{1}\cap U$. Moreover, $N_{0}\cap U=N_{1}\cap U$
is a smooth spacelike hypersurface with mean curvature $H_{0}$.\end{thm}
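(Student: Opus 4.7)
The plan is to localize the problem, represent both hypersurfaces as graphs over a common smooth spacelike reference hypersurface, and then apply a viscosity strong maximum principle for a quasilinear elliptic operator whose linearization is made tractable precisely by the one-sided Hessian bound on $N_{1}$.

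First I would choose, after shrinking to a coordinate neighborhood of $q_{0}$, a smooth spacelike hypersurface $\Sigma_{0}$ through $q_{0}$ (e.g.\ $\{t=0\}$ in Gaussian normal coordinates based at $q_{0}$) transverse to the time direction. Since $N_{0}$ and $N_{1}$ are $\mathcal{C}^{0}$ spacelike, hence acausal topological hypersurfaces, they can be written as graphs $N_{i}=\mathrm{graph}(u_{i})$ of continuous functions $u_{0},u_{1}$ on an open set $V\subset\Sigma_{0}$ containing $q_{0}$, and the hypothesis that $N_{0}$ is locally to the future of $N_{1}$ translates into $u_{0}\geq u_{1}$ on $V$ with equality at $q_{0}$. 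On this model, the mean curvature of a smooth graph is a quasilinear second-order operator
\[
\mathcal{H}[u]=a^{ij}(Du)\,\partial_{i}\partial_{j}u+b(u,Du),
\]
uniformly elliptic on any family of functions admitting a common spacelike gradient bound.

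Next I would reformulate the two support-hypersurface hypotheses as one-sided differential inequalities. Each $\mathcal{C}^{2}$ future support of $N_{0}$ at $q$ corresponds to a $\mathcal{C}^{2}$ function $\phi^{0}_{q,\varepsilon}\geq u_{0}$ touching from above with $\mathcal{H}[\phi^{0}_{q,\varepsilon}](q)\leq H_{0}+\varepsilon$, so $u_{0}$ is a viscosity supersolution of $\mathcal{H}[u]=H_{0}$. Symmetrically, the past supports of $N_{1}$ give $\phi^{1}_{q,\varepsilon}\leq u_{1}$ with $\mathcal{H}[\phi^{1}_{q,\varepsilon}](q)\geq H_{0}-\varepsilon$, making $u_{1}$ a viscosity subsolution. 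The bound (\ref{eq:sec fund form hessian bounds}) together with the compact control of the normals $\mathbf{n}_{P_{q,\varepsilon}}(q)\in\hat{K}$ converts into a uniform lower Hessian bound $D^{2}\phi^{1}_{q,\varepsilon}(q)\geq-C'\,\mathrm{Id}$ in our chart, so $u_{1}$ is locally semiconvex with a uniform constant.

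The core step is a strong maximum principle applied to $w:=u_{0}-u_{1}\geq 0$, which vanishes at $q_{0}$. For each $q$ near $q_{0}$ I would subtract matched supports, setting $\psi_{q,\varepsilon}:=\phi^{0}_{q,\varepsilon}-\phi^{1}_{q,\varepsilon}\geq 0$ with $\psi_{q,\varepsilon}(q)=w(q)$, and use a mean-value identity
\[
\mathcal{H}[\phi^{0}_{q,\varepsilon}]-\mathcal{H}[\phi^{1}_{q,\varepsilon}]=A^{ij}_{q,\varepsilon}\partial_{i}\partial_{j}\psi_{q,\varepsilon}+B^{i}_{q,\varepsilon}\partial_{i}\psi_{q,\varepsilon},
\]
where the coefficients depend on an interpolation between the derivatives of the two supports. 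Semiconvexity of $u_{1}$, continuity of $u_{0}$, and the uniform normal bound keep $A^{ij}_{q,\varepsilon}$ uniformly elliptic and $B^{i}_{q,\varepsilon}$ bounded as $\varepsilon\to 0$. This exhibits $w$ as a nonnegative upper semicontinuous viscosity supersolution of a uniformly elliptic linear operator $L$ with bounded measurable coefficients; the classical Hopf/Alexandrov--Bakelman--Pucci strong maximum principle then forces $w\equiv 0$ in a neighborhood of $q_{0}$. Once $u_{0}=u_{1}$, the common graph function is simultaneously a viscosity sub- and supersolution of the smooth quasilinear equation $\mathcal{H}[u]=H_{0}$, and Evans--Krylov / Schauder regularity (bootstrapping from the $C^{1,1}$ control already provided by the two-sided support bounds) promotes it to a smooth solution, yielding the asserted smooth spacelike hypersurface of mean curvature $H_{0}$.

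The main obstacle is securing uniform ellipticity of the linearized operator $L$: without the one-sided Hessian bound on $N_{1}$ the Hessians of the lower supports could diverge as $\varepsilon\to 0$, the coefficients $A^{ij}_{q,\varepsilon}$ would degenerate, and the strong maximum principle would fail. The technical heart of the proof is therefore the careful verification that (\ref{eq:sec fund form hessian bounds}), combined with the compact normal set $\hat{K}$, survives the passage from the intrinsic second fundamental form to the coordinate Hessian with a constant independent of $q$ and $\varepsilon$.
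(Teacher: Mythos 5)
The paper does not prove this theorem at all: it is quoted verbatim from Andersson--Galloway--Howard and the ``proof'' consists of the citation \citep[Thm.~3.6]{AGH1997}. Your outline is in fact a faithful reconstruction of the strategy of that reference: localize, write $N_{0}$ and $N_{1}$ as graphs $u_{0}\geq u_{1}$ over a common smooth spacelike reference slice, translate the support-hypersurface hypotheses into one-sided second-order differential inequalities for the quasilinear mean curvature operator, subtract matched supports to obtain a nonnegative difference $w=u_{0}-u_{1}$ annihilated by a linear uniformly elliptic operator, conclude $w\equiv0$ from a strong maximum principle, and finish with elliptic regularity. The role you assign to the one-sided Hessian bound and the compact set $\hat{K}$ of normals (keeping the linearized coefficients uniformly elliptic and bounded as $\varepsilon\to0$) is exactly their role in \citep{AGH1997}.

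The one step you cannot take off the shelf is the invocation of the ``classical Hopf/Alexandrov--Bakelman--Pucci strong maximum principle'' for $w$. After subtracting supports, $w$ is \emph{not} a distributional or classical supersolution of $Lw=0$; all you know is that at each point $q$ there exists a $\mathcal{C}^{2}$ function $\psi_{q,\varepsilon}\geq w$ touching at $q$ with $L\psi_{q,\varepsilon}(q)\leq2\varepsilon$, i.e.\ $w$ is a supersolution only in the sense of support functions (note also that this existential condition is not the same as the universal viscosity condition you appeal to). Proving a strong maximum principle valid in precisely this weak setting --- using the uniform semiconvexity coming from (\ref{eq:sec fund form hessian bounds}) to get Alexandrov twice-differentiability almost everywhere and then running an ABP-type measure-theoretic argument --- is the actual technical content of the Andersson--Galloway--Howard paper, so as written your argument is circular at its decisive step. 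Likewise, the final regularity claim needs more care: the two-sided support bounds by themselves do not give $C^{1,1}$ control, since only the past supports carry a Hessian bound; the $C^{1,1}$ regularity of the common graph has to be extracted from the equality case of the maximum principle before Schauder theory can be applied.
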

\begin{proof}
See \citep[Thm.~3.6]{AGH1997}.
\end{proof}
We are now going to show the analogue to \citep[Prop. 4.9, 4.]{AGH1997}
for our situation.
\begin{prop}
Let $(M,g)$ be a globally hyperbolic spacetime, $\Sigma\subset M$
an acausal, FCC and spacelike hypersurface, $\gamma:[0,a)\to M$ a
$\Sigma$-ray and let $U$ be a nice neighborhood of $\gamma(t)$.
If $\mathbf{Ric}(v,v)\geq-\left(n-1\right)\kappa\, g(v,v)$ for all
timelike $v\in TM$, then
\begin{equation}
H_{N_{t}}\geq-(n-1)s_{\kappa}(a-t)=\begin{cases}
-\sqrt{\kappa}\cot(\sqrt{\kappa}(a-t)) & \kappa>0\\
-\frac{1}{a-t} & \kappa=0\\
-\sqrt{\left|\kappa\right|}\coth(\sqrt{\left|\kappa\right|}(a-t)) & \kappa<0
\end{cases}\label{eq:lower bound on H N}
\end{equation}
in the sense of support hypersurfaces with one-sided Hessian bounds.
Note that by Thm.\ \ref{thm:mean curv comp} $a=\infty$ can only
happen if $\kappa=0$ or $\kappa<0$ (for $\kappa>0$ one has $b_{\kappa,\beta}<\infty$
for any $\beta\in\mathbb{R}$) in which cases the functions behave
nicely at infinity and we set $\frac{1}{a-t}:=0$ and $-\sqrt{\left|\kappa\right|}\coth(\sqrt{\left|\kappa\right|}(a-t)):=-\sqrt{\left|\kappa\right|}$,
respectively.\end{prop}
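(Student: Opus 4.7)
The plan is to build, for each $q\in N_t$ and $\varepsilon>0$, an explicit $C^2$ past support hypersurface of $N_t$ at $q$ realizing the desired lower mean-curvature bound, in the spirit of \citep[Prop.~4.9]{AGH1997}. The key ingredient is the timelike asymptote $\alpha_q:[0,a-t)\to M$ supplied by Proposition~\ref{prop:properties of buseman}(3) together with the d'Alembertian comparison of Theorem~\ref{thm:box of distance funct}.

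For $s\in(0,a-t)$ set $p_s:=\alpha_q(s)$ and consider the past sphere of radius $s$ around $p_s$,
\[
P_{q,s}:=\{x\in M:\tau_x(p_s)=s\}.
\]
Since $\alpha_q|_{[0,s]}$ is a maximizing geodesic from $q$ to $p_s$, the point $q$ is not a past cut point of $p_s$, so $x\mapsto\tau_x(p_s)$ is smooth near $q$ and $P_{q,s}$ is a $C^\infty$ spacelike hypersurface through $q$ with future-pointing unit normal $\dot\alpha_q(0)$. To see that $P_{q,s}$ is locally to the past of $N_t$ near $q$, note that for $x\in P_{q,s}\cap I^-(p_s)$ the Busemann monotonicity (\ref{eq:b increasing}) gives $b(x)\leq b(p_s)-\tau_x(p_s)=(t+s)-s=t$; and any $x$ with $b(x)\leq t$ can be moved into $N_t$ along its own asymptote in time $t-b(x)<a-b(x)$, placing $x$ in $J^-(N_t)$.

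For the mean curvature at $q$, set $f(x):=\tau_x(p_s)$. A direct calculation (using that the integral curves of $\mathrm{grad}\,f$ are geodesics, so $\mathrm{Hess}\,f(\mathbf{n},\mathbf{n})=0$ for $\mathbf{n}=\mathrm{grad}\,f$) gives $H_{P_{q,s}}(q)=\Box f|_q$. Applying Theorem~\ref{thm:box of distance funct} in the time-reversed spacetime---valid because the lower timelike Ricci bound is preserved under time reversal---yields $-\Box f|_q\leq(n-1)s_\kappa(f(q))=(n-1)s_\kappa(s)$, whence
\[
H_{P_{q,s}}(q)\geq-(n-1)s_\kappa(s).
\]
Since $s_\kappa$ is continuous and strictly decreasing on $(0,b_{\kappa,\beta})$, for any $\varepsilon>0$ I choose $s_\varepsilon\in(0,a-t)$ close enough to $a-t$ that $(n-1)(s_\kappa(s_\varepsilon)-s_\kappa(a-t))<\varepsilon$; then $P_{q,\varepsilon}:=P_{q,s_\varepsilon}$ satisfies $H_{P_{q,\varepsilon}}(q)\geq-(n-1)s_\kappa(a-t)-\varepsilon$, as required.

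Finally, the uniform conditions over a compact $K\subset N_t$: the future normals $\{\dot\alpha_q(0):q\in K\}$ lie in a fixed compact $\hat K\subset TM$, immediately from the estimate (\ref{eq:blubbbbbbbb}), and one needs a single constant $C_K$ with $II_{P_{q,\varepsilon}}(q)\geq-C_K g|_{P_{q,\varepsilon}}(q)$ for all $q\in K$ and all $\varepsilon>0$. Verifying the latter is the main technical obstacle: describing $\mathbf{S}_{P_{q,s}}(q)$ by Jacobi fields along $\alpha_q|_{[0,s]}$, the maximality of $\alpha_q$ on $[0,a-t)$ precludes focal points and makes the shape operators depend continuously on $(q,s)\in K\times(0,a-t]$, from which compactness yields the required $C_K$.
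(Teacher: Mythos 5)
Your construction is the same as the paper's: the support hypersurfaces are exactly the past spheres $S^{-}_{\alpha_p(s)}(s)$ along the asymptote, the mean curvature bound comes from the (time-reversed) d'Alembertian comparison Thm.\ \ref{thm:box of distance funct}, the ``locally to the past'' property comes from (\ref{eq:b increasing}) and (\ref{eq:b along asymptote}), and the compactness of the normals comes from (\ref{eq:blubbbbbbbb}). Two remarks. First, for the support property the paper is slightly more careful than you are: rather than ``moving $x$ into $N_t$ along its own asymptote'' (which presupposes that this asymptote actually meets the level set $N_t$ \emph{inside} the neighborhood $U$ where $N_t$ is defined), it deduces $S^{-}_{\alpha_p(s)}(s)\cap I^{+}(N_t)=\emptyset$ from the Busemann inequalities and then uses that $N_t$ is edgeless and acausal in its (open) Cauchy development to conclude $S^{-}_{\alpha_p(s)}(s)\cap D\subset J^{-}(N_t,D)$; your shortcut should be replaced by this argument.

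Second, the one-sided Hessian bound is where your proposal has a genuine gap. You invoke continuity of the shape operators on $K\times(0,a-t]$ and then ``compactness,'' but that set is not compact: the interval is open at $0$, the endpoint $s=a-t$ is never attained by the asymptote (and equals $\infty$ when $a=\infty$), and you need the bound precisely for $s$ arbitrarily close to $a-t$ in order to run your $\varepsilon$-approximation of the mean curvature. So continuity-plus-compactness does not deliver a single $C_K$. The paper closes this by citing \citep[Prop.~3.5]{AGH1997}, whose only inputs are the compactness of $\{\dot\alpha_p(0):p\in K\}$ and the absence of focal points along the maximizing segments; if you want to avoid the citation, the clean repair is the monotonicity of the spheres in $s$: by the reverse triangle inequality $P_{q,s'}$ lies locally to the past of $P_{q,s}$ for $s'\leq s$ with tangency at $q$, so $h_{P_{q,s}}(q)\geq h_{P_{q,s_0}}(q)$ for all $s\geq s_0$, and a single fixed $s_0$ together with compactness of $K$ then yields $C_K$.
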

\begin{proof}
The proof is completely analogous to \citep[Prop. 4.9, 4.]{AGH1997}.
Given any $p\in N_{t}$ there exists a timelike asymptote $\alpha_{p}:[0,a-t)\to M$
by Prop.\ \ref{prop:properties of buseman}. Now we look at $S_{\alpha_{p}(s)}^{-}(s):=\{x\in M:\tau_{x}(\alpha_{p}(s))=s\}$.
Clearly $S_{\alpha_{p}(s)}^{-}(s)$ is a smooth hypersurface for any
$s\in(0,a-t)$, $p\in S_{\alpha_{p}(s)}^{-}(s)$, and by Thm.\ \ref{thm:box of distance funct}
\[
H_{S_{\alpha_{p}(s)}^{-}(s)}\geq-(n-1)s_{\kappa}(s).
\]
From (\ref{eq:b increasing}) and (\ref{eq:b along asymptote}) we
get immediately that $b(x)\leq b(\alpha_{p}(s))-\tau_{x}(\alpha_{p}(s))=t$
for all $x\in S_{\alpha_{p}(s)}^{-}(s)$ and invoking (\ref{eq:b increasing})
again this shows $S_{\alpha_{p}(s)}^{-}(s)\cap I^{+}(N_{t})=\emptyset$.
Since $N_{t}$ is an acausal topological hypersurface in $U$ its
Cauchy development $D$ (in $U$) must be open (\citep[Lem.~14.43]{ONeill_SRG}),
$N_{t}$ is edgeless and acausal in $D$ and $S_{\alpha_{p}(s)}^{-}(s)\cap D\subset J^{-}(N_{t},D)$
(because as noted above $S_{\alpha_{p}(s)}^{-}(s)\cap I^{+}(N_{t})=\emptyset$),
hence the $S_{\alpha_{p}(s)}^{-}(s)$ lie locally to the past of $N_{t}$
near $p$ for any $s\in(0,a-t)$. But this means that they are past
support hypersurfaces with the right curvature bounds. By (\ref{eq:blubbbbbbbb})
the unit normals $\alpha_{p}'(0)$ are contained in a compact set
for all $p\in N_{t}\cap U$, so we can use \citep[Prop.~3.5]{AGH1997}
to see hat they also satisfy the estimate (\ref{eq:sec fund form hessian bounds})
on the second fundamental form.
\end{proof}
Combining the above with the mean curvature comparison Thm.\ \ref{thm:mean curv comp}
and the geometric maximum principle (Thm.\ \ref{thm:geom max princ},
\citep[Prop.~4.6]{AGH1997}) yields the following analogue to \citep[Lem.~3.2]{GALLOWAY1989_4}:
\begin{prop}
\label{prop:ein ray kommt selten allein}Assume that $\left(M,g,\Sigma\right)$
satisfies $CCC(\kappa,\beta)$ with either $\kappa>0$ or $\beta\leq-(n-1)\sqrt{\left|\kappa\right|}$.
If $\gamma:[0,b_{\kappa,\beta})\to M$ is a maximal $\Sigma$-ray,
then there exists a neighborhood $U$ of $\gamma(0)$ in $\Sigma$
such that any inextendible (f.d., unit-speed) geodesic $\sigma$ with
$\sigma'(0)\in T\Sigma^{\perp}|_{U}$ is also a $\Sigma$-ray.\end{prop}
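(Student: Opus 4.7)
The plan is to apply the Lorentzian Geometric Maximum Principle (Thm.~\ref{thm:geom max princ}) to compare the $\tau_\Sigma$-level set $\Sigma_{t_0}:=\tau_\Sigma^{-1}(\{t_0\})$ with the Busemann level set $N_{t_0}:=\{b=t_0\}\cap U_0$ at some $t_0\in(0,b_{\kappa,\beta})$, where $b$ is the Busemann function of $\gamma$ and $U_0$ is a nice neighborhood of $\gamma(t_0)$ from Prop.~\ref{prop:properties of buseman}. Both surfaces pass through $\gamma(t_0)$: $\tau_\Sigma(\gamma(t_0))=t_0$ because $\gamma$ is a $\Sigma$-ray, and $b(\gamma(t_0))=t_0$ follows from (\ref{eq: RTI}) and the fact that $\gamma$ is a maximizing unit-speed geodesic. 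Since $t_0<b_{\kappa,\beta}$ and $\gamma$ maximizes to $\Sigma$ up to time $t_0$, we have $\gamma(t_0)\notin\mathrm{Cut}^+(\Sigma)$, so $\Sigma_{t_0}$ is a smooth spacelike hypersurface near $\gamma(t_0)$.

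The crucial input is the matching of the curvature bounds at level $t_0$. Thm.~\ref{thm:mean curv comp} gives $H_{\Sigma_{t_0}}(\gamma(t_0))\leq H_{\kappa,\beta}(t_0)$, while the previous proposition yields $H_{N_{t_0}}\geq -(n-1)s_\kappa(b_{\kappa,\beta}-t_0)$ in the sense of support hypersurfaces with one-sided Hessian bounds. Under the hypothesis $\kappa>0$ or $\beta\leq-(n-1)\sqrt{|\kappa|}$ one in fact has the identity $H_{\kappa,\beta}(t)=-(n-1)s_\kappa(b_{\kappa,\beta}-t)$, as a direct case-by-case check using Table~\ref{tab:Warping-functions-for} and (\ref{eq:def s kappa}) confirms; thus both bounds coincide with a common value $H_0$. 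This identity is exactly what fails in the complementary regime (cf.\ Example~\ref{ex: other kappa beta do not work}) and is the single analytic fact that forces the curvature restriction in the statement.

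With the two bounds pinched to $H_0$, the inequality $b\geq\tau_\Sigma$ (\ref{eq:busem geq dist}) and the monotonicity of $b$ along f.d.\ causal curves (\ref{eq:b increasing}) show that, after shrinking $U_0$ if necessary, $\Sigma_{t_0}$ lies locally to the future of $N_{t_0}$ near $\gamma(t_0)$: every $p\in\Sigma_{t_0}\cap U_0$ satisfies $b(p)\geq\tau_\Sigma(p)=t_0$ and hence lies in $J^+(N_{t_0},U_0)$. The Lorentzian geometric maximum principle (Thm.~\ref{thm:geom max princ}) then produces a neighborhood $V$ of $\gamma(t_0)$ in which $\Sigma_{t_0}=N_{t_0}$, this common surface being smooth with constant mean curvature $H_0$.

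To extract the desired family of rays from $V$, for each $p\in V$ I consider the unique f.d.\ unit-speed $\Sigma$-minimizer $\sigma_p:[0,t_0]\to M$ (Prop.~\ref{prop: time sep continuous}), starting orthogonally at some $q_p\in\Sigma$, together with a maximizing, f.d., unit-speed, future-inextendible asymptote $\alpha_p:[0,b_{\kappa,\beta}-t_0)\to M$ at $p$ (Prop.~\ref{prop:properties of buseman}, available because $b(p)=t_0$). Since both $\dot\sigma_p(t_0)$ and $\dot\alpha_p(0)$ are future-directed unit vectors orthogonal to the smooth spacelike hypersurface $\Sigma_{t_0}=N_{t_0}$ at $p$, they coincide, so the concatenation $\tilde\sigma:[0,b_{\kappa,\beta})\to M$ is a single smooth f.d.\ unit-speed geodesic, future-inextendible, starting orthogonally from $q_p\in\Sigma$; the reverse triangle inequality (\ref{eq: RTI}) combined with $\tau_\Sigma\leq b$ and $b(\alpha_p(r))=t_0+r$ forces $\tau_\Sigma(\tilde\sigma(s))=s$ throughout, so $\tilde\sigma$ is a maximal $\Sigma$-ray. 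Because $\gamma(t_0)\notin\mathrm{Cut}^+(\Sigma)$, the normal exponential is a local diffeomorphism at $t_0\mathbf{n}_{\gamma(0)}$, hence $p\mapsto q_p$ maps $V$ onto an open neighborhood $U$ of $\gamma(0)$ in $\Sigma$, and for every $q\in U$ the curve $\tilde\sigma$ built from $p=\exp^N(t_0\mathbf{n}_q)$ is the unique f.d.\ inextendible unit-speed geodesic with initial velocity in $T_q\Sigma^\perp$. The hardest step I anticipate is the second one, namely verifying $H_{\kappa,\beta}(t)=-(n-1)s_\kappa(b_{\kappa,\beta}-t)$ case by case: this computation, though elementary, is the sole reason the theorem is restricted to the stated curvature regime.
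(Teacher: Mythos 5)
Your proposal is correct and follows essentially the same route as the paper: identify the $\tau_\Sigma$-level set with the Busemann level set near a point of $\gamma$ via the geometric maximum principle (using the mean curvature comparison on one side, the previous proposition on the other, and the identity $H_{\kappa,\beta}(t)=-(n-1)s_\kappa(b_{\kappa,\beta}-t)$ in the stated curvature regime), then concatenate normal geodesics with asymptotes and use $b\geq\tau_\Sigma$ to see the concatenation is maximizing, hence an unbroken $\Sigma$-ray. The only cosmetic differences are that the paper works at a small parameter $\delta$ with $\Sigma_\delta=\exp^N(V\times\{\delta\})$ rather than a general $t_0$, and that it lets the maximizing property alone certify smoothness of the concatenation, making your normal-matching step unnecessary.
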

\begin{proof}
Choose a neighborhood $V$ of $\gamma(0)$ in $\Sigma$ and $\delta>0$
small enough such that $\exp^{N}$ is smooth on $V\times(-\delta,\delta)$
and denote by $\Sigma_{\delta}$ the hypersurface $\exp^{N}(V\times\{\delta\})$.
Note that by shrinking $V$ if necessary we can assume $\Sigma_{\delta}\subset U$
for a nice neighborhood $U$ of $\gamma(\delta)$. From (\ref{eq:busem geq dist})
it follows that $b(p)\geq\delta$ for all $p\in\Sigma_{\delta}$.
Since $b$ is strictly increasing along timelike curves (again (\ref{eq:b increasing}))
this shows $\Sigma_{\delta}\cap I^{-}(N_{\delta})=\emptyset$, so
as before looking at the Cachy development $D$ of $U\cap N_{\delta}$
we see that $\Sigma_{\delta}\subset J^{+}(N_{\delta},D)$ and obviously
$\gamma(\delta)\in\Sigma_{\delta}\cap N_{\delta}$, so $\Sigma_{\delta}$
lies locally to the future of $N_{\delta}$. Now Thm.\ \ref{thm:mean curv comp}
shows $H_{\Sigma_{\delta}}\leq H_{\kappa,\beta}(\delta)$ and comparing
the definition of $s_{\kappa}$ in (\ref{eq:def s kappa}) with Table
\ref{tab:Warping-functions-for} shows $H_{\kappa,\beta}(\delta)=-(n-1)s_{\kappa}(b_{\kappa,\beta}-\delta)$
if $b_{\kappa,\beta}<\infty$ (i.e., $\kappa>0$ or $\beta<-(n-1)\sqrt{\left|\kappa\right|}$)
and $H_{\Sigma_{\delta}}\leq H_{\kappa,\beta}(\delta)\leq\beta=-(n-1)\sqrt{\left|\kappa\right|}=\lim_{r\to\infty}-(n-1)s_{\kappa}(r)$
if $b_{\kappa,\beta}=\infty$ (i.e., $\beta=-(n-1)\sqrt{\left|\kappa\right|}$).
Thus, taking into acount the lower bound (\ref{eq:lower bound on H N})
on $H_{N_{\delta}}$) we can apply Thm.\ \ref{thm:geom max princ}
to obtain $N_{\delta}=H_{\Sigma_{\delta}}$. 

Now for any $p\in V$ we look at the curve $\tilde{\alpha}_{p}:[0,b_{\kappa,\beta})\to M$
given by
\[
\tilde{\alpha}_{p}(t):=\begin{cases}
\exp^{N}(t\mathbf{n}_{p}) & 0\leq t\leq\delta\\
\alpha_{\exp^{N}(\delta\mathbf{n}_{p})}(t-\delta) & \delta\leq t<b_{\kappa,\beta}
\end{cases}.
\]
This curve satisfies $\tau_{\Sigma}(\tilde{\alpha}_{p}(t))=t$: By
(\ref{eq:b along asymptote}) one has $b(\tilde{\alpha}_{p}(t))=t-\delta+b(\exp^{N}(\delta\mathbf{n}_{p}))=t$
and the claim follows from (\ref{eq:busem geq dist}). Because $\tilde{\alpha}_{p}$
is parametrized by arc-length this shows that $\tilde{\alpha}_{p}$
always maximizes the distance to $\Sigma$ so it has to be a geodesic
starting orthogonally to $\Sigma$ and a $\Sigma$-ray.
\end{proof}
The previous result allows us to prove a local splitting via Thm.\
\ref{thm: max injectivity radius}. To extend this to a global one
we need one more Lemma.
\begin{lem}
\label{lem:I- gamma ist alles for comparison}Let $\kappa,\beta\in\mathbb{R}$
with either $\kappa>0$ or $\beta\leq-(n-1)\sqrt{\left|\kappa\right|}$,
let $(\Sigma,h)$ be an $(n-1)$-dimensional Riemannian manifold,
and let $M:=[0,b_{\kappa,\beta})\times_{f_{\kappa,\beta}}\Sigma$.
Then for any $t\in[0,b_{\kappa,\beta})$ and any $r>0$ there exists
$\tilde{t}\in(t,b_{\kappa,\beta})$ such that $\{t\}\times B_{p}(r)\subset J^{-}((\tilde{t},p))$
for all $p\in\Sigma$. Furthermore $M=J^{-}([0,b_{\kappa,\beta})\times\{p\})$
for any $p\in\Sigma$.\end{lem}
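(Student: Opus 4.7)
The plan is to reduce everything to a single integral inequality for the warping function. If I write a curve in $M$ in the form $\alpha(s) = (s, \eta(s))$ parametrized by the coordinate-time $s$, then $\alpha$ is future-directed causal precisely when $f_{\kappa,\beta}(s)^2\, |\dot\eta(s)|_h^2 \leq 1$. Hence any piecewise smooth $h$-path $\eta_0 : [0, L] \to \Sigma$ of $h$-arclength $L$ joining $x$ to $p$ can be retraced by a future-directed timelike curve defined on $[t,\tilde t]$ provided $\int_t^{\tilde t} ds/|f_{\kappa,\beta}(s)| > L$.

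The key analytic input is the claim
\[
\int_t^{b_{\kappa,\beta}}\!\frac{ds}{|f_{\kappa,\beta}(s)|} = \infty
\]
for every $t \in [0, b_{\kappa,\beta})$ in each case covered by the hypothesis ``$\kappa > 0$ or $\beta \leq -(n-1)\sqrt{|\kappa|}$''. I would verify this directly from Table \ref{tab:Warping-functions-for}: whenever $b_{\kappa,\beta}$ is finite (the three $\kappa > 0$ subcases, $\kappa = 0$ with $\beta < 0$, and $\kappa < 0$ with $\beta < -(n-1)\sqrt{|\kappa|}$), $f_{\kappa,\beta}$ has a simple zero at $s = b_{\kappa,\beta}$ (coming from $\sin$, an affine function, or $\sinh$ respectively), so $1/|f_{\kappa,\beta}|$ fails to be integrable at the endpoint; and in the remaining case $\kappa < 0$, $\beta = -(n-1)\sqrt{|\kappa|}$ one has $b_{\kappa,\beta} = \infty$ and $f_{\kappa,\beta}(s) = e^{-\sqrt{|\kappa|}s}$, so $1/f_{\kappa,\beta}$ grows exponentially and the integral diverges at infinity. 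Note that in the cases \emph{excluded} by our hypothesis the warping function instead stays bounded away from zero, so this divergence is exactly what distinguishes the regime of interest.

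Given the divergence, the construction is straightforward. Pick $\tilde t \in (t, b_{\kappa,\beta})$ with $\int_t^{\tilde t} ds/|f_{\kappa,\beta}| > r$; this $\tilde t$ then works uniformly in $p$. For any $p \in \Sigma$ and any $x \in B_p(r)$, choose a piecewise smooth curve $\eta_0 : [0, L] \to \Sigma$ parametrized by $h$-arclength from $x$ to $p$ with $L < r$ (possible since $d_h(x, p) < r$), and define $\psi : [t, \tilde t] \to [0, L]$ by $\psi(t) = 0$ and
\[
\dot\psi(s) := \frac{L}{|f_{\kappa,\beta}(s)|\, \int_t^{\tilde t} d\sigma/|f_{\kappa,\beta}(\sigma)|},
\]
so that $\psi(\tilde t) = L$. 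Then $\alpha(s) := (s, \eta_0(\psi(s)))$ is a future-directed timelike curve from $(t, x)$ to $(\tilde t, p)$ because
\[
g(\dot\alpha, \dot\alpha) = -1 + f_{\kappa,\beta}(s)^2 \dot\psi(s)^2 = -1 + \bigl(L / \textstyle\int_t^{\tilde t} d\sigma/|f_{\kappa,\beta}|\bigr)^2 < 0.
\]
The second assertion follows at once: for arbitrary $(t, x) \in M$ and any fixed $p \in \Sigma$, connectedness of $\Sigma$ (which we assume implicitly, as in the CCC setup) gives some finite $h$-length path from $x$ to $p$; applying the first part with any $r$ exceeding this length yields a $\tilde t$ with $(t, x) \in J^-((\tilde t, p))$.

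The only real obstacle is the bookkeeping in the analytic step, i.e.\ confirming that $1/|f_{\kappa,\beta}|$ fails to be integrable up to $b_{\kappa,\beta}$ in every admissible $(\kappa,\beta)$-subcase. Once that is in hand the statement is just an elementary reparametrization argument that exploits the warped-product structure of the metric.
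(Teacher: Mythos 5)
Your proof is correct and follows essentially the same route as the paper's: both construct explicit causal curves in the warped product whose $\Sigma$-projection covers the required $h$-distance, exploiting the degeneration of $f_{\kappa,\beta}$ at $b_{\kappa,\beta}$ (the paper via the null-curve ODE $c_{0}'=|f_{\kappa,\beta}(c_{0})|$ and monotonicity of $f_{\kappa,\beta}^{2}$ near the endpoint, you via the equivalent divergence $\int^{b_{\kappa,\beta}}ds/|f_{\kappa,\beta}(s)|=\infty$). If anything, your explicit integral criterion is the cleaner quantitative packaging of the same fact and delivers the uniform $\tilde{t}$ for arbitrary $t$ and $r$ somewhat more directly than the paper's linear bound $c_{0}(s)\leq|f_{\kappa,\beta}(t)|s+t$.
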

\begin{proof}
We look at (future directed) null curves $c=(c_{0},\bar{c})$ starting
at a point $(t,p)$ such that the projection $\bar{c}$ is a unit-speed
curve in $(\Sigma,h)$. This yields the ODE $c_{0}'(s)^{2}=f_{\kappa,\beta}^{2}(c_{0}(s))\left|\bar{c}'(s)\right|_{h}^{2}=f_{\kappa,\beta}^{2}(c_{0}(s))$
with $c_{0}(0)=t$. Since we want $c$ to be future directed, we need
$c'_{0}>0$, so the ODE becomes $c_{0}'(s)=\left|f_{\kappa,\beta}(c_{0}(s))\right|$.
Noting that $f_{\kappa,\beta}^{2}$ is monotonously decreasing on
$[r_{\kappa,\beta},b_{\kappa,\beta})$ for some $r_{\kappa,\beta}<b_{\kappa,\beta}$(see
Table \ref{tab:Warping-functions-for}) this gives that $c_{0}(s)\leq\left|f_{\kappa,\beta}(t)\right|s+t$
for $t\geq r_{\kappa,\beta}$. So given any radius $r$ there exists
$t$ such that $c_{0}(r)\leq\left|f_{\kappa,\beta}(t)\right|r+t<b_{\kappa,\beta}$
Now let $p,q\in\Sigma$ with $q\in S_{p}(\bar{r})$ for $\bar{r}<r$
then there is a future directed null curve $c:[0,\tilde{r}]\to M$
(with $r>\tilde{r}\geq\bar{r}$ since there may not exist a curve
from $p$ to $q$ in $\Sigma$ of minimal length) from $(t,q)$ to
$(c_{0}(\tilde{r}),p)$, i.e., $(t,q)\in J^{-}((c_{0}(\tilde{r}),p))\subset J^{-}((c_{0}(r),p))$.
This finishes the proof.
\end{proof}
Now we are ready to prove the theorem. 
\begin{thm}
\label{thm:maximal ray splitting}Assume that $\left(M,g,\Sigma\right)$
satisfies $CCC(\kappa,\beta)$ with constants $\kappa,\beta$ such
that $\kappa>0$ or $\beta\leq-(n-1)\sqrt{\left|\kappa\right|}$.
If $M$ contains a maximal $\Sigma$-ray $\gamma:[0,b_{\kappa,\beta})\to M$,
then $I^{+}(\Sigma)$ is isometric to the warped product 
\begin{equation}
I^{+}(\Sigma)\cong(0,b_{\kappa,\beta})\times_{f_{\kappa,\beta}}(\Sigma,\frac{1}{f_{\kappa,\beta}(0)^{2}}\, g|_{\Sigma}).\label{eq:I+ isom max ray-1}
\end{equation}
\end{thm}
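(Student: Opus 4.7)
The plan is to leverage Proposition \ref{prop:ein ray kommt selten allein} to show that \emph{every} inextendible orthogonal geodesic from $\Sigma$ is a maximal $\Sigma$-ray, so that the desired warped-product splitting follows from Theorem \ref{thm: max injectivity radius}. Concretely, I introduce
\[
A := \{p\in\Sigma : \text{the inextendible f.d.\ unit-speed geodesic } \sigma_p \text{ orthogonal to } \Sigma \text{ at } p \text{ is a maximal } \Sigma\text{-ray}\}.
\]
Since $\gamma(0)\in A$, $A$ is nonempty. Once $A=\Sigma$ is established one gets $s_{\Sigma}^{+}(\mathbf n_p)\ge b_{\kappa,\beta}$ for every $p\in\Sigma$; combined with the opposite inequality from Corollary \ref{cor: tau<b on I+} this forces $\mathrm{Cut}^{+}(\Sigma)=\emptyset$ and $\mathrm{inj}_\Sigma^{+}(M)=b_{\kappa,\beta}$, and Theorem \ref{thm: max injectivity radius} then directly yields the isometry \eqref{eq:I+ isom max ray-1}. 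Hence the task reduces to showing that $A$, being an open-and-closed subset of the connected manifold $\Sigma$, equals all of $\Sigma$.

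Openness of $A$ follows immediately from Proposition \ref{prop:ein ray kommt selten allein}: for $p_0\in A$ that proposition applied to the maximal $\Sigma$-ray $\sigma_{p_0}$ yields an open neighborhood $V\subset\Sigma$ of $p_0$ such that any inextendible orthogonal geodesic from $V$ is a $\Sigma$-ray; tracing the construction of $\tilde\alpha_q$ in its proof shows that these are parametrized on the full interval $[0,b_{\kappa,\beta})$, so $V\subset A$.

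The main step is closedness. Given $p_n\in A$ with $p_n\to p\in\Sigma$ and $t\in(0,b_{\kappa,\beta})$ fixed, I choose $p_{n_0}\in A$ very close to $p$ and combine Proposition \ref{prop:ein ray kommt selten allein} with the local content of Theorem \ref{thm: max injectivity radius} (whose derivation is purely a Riccati comparison along each individual ray, hence applies on any subset where every orthogonal geodesic is a maximal $\Sigma$-ray) to obtain, on a neighborhood $V\subset A$ of $p_{n_0}$, an isometric identification of $\exp^N([0,b_{\kappa,\beta})\times V)$ with $[0,b_{\kappa,\beta})\times_{f_{\kappa,\beta}}(V,g|_V/f_{\kappa,\beta}(0)^{2})$. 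Applying Lemma \ref{lem:I- gamma ist alles for comparison} inside this local model produces, for a fixed small radius $r>0$ with $\overline{B_{p_{n_0}}(r)}\subset V$ compact, some $\tilde t\in(t,b_{\kappa,\beta})$ with $\sigma_q(t)\in J^{-}(\sigma_{p_{n_0}}(\tilde t))$ for all $q\in\overline{B_{p_{n_0}}(r)}$. Global hyperbolicity of $(M,g)$ then makes $J^{+}(\overline{B_{p_{n_0}}(r)})\cap J^{-}(\sigma_{p_{n_0}}(\tilde t))$ compact, and for $n$ large $p_n\in B_{p_{n_0}}(r)$, confining the points $\sigma_{p_n}(t)$ to this common compactum. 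A standard parametrized limit-curve extraction, together with upper semi-continuity of the Lorentzian length functional in globally hyperbolic spacetimes and continuity of $\tau_\Sigma$ (Proposition \ref{prop: time sep continuous}), then produces a maximizing unit-speed f.d.\ causal limit curve from $p$ of length $t$; by continuity of the orthogonal normal $p\mapsto\mathbf n_p$ and uniqueness of geodesics this limit is $\sigma_p|_{[0,t]}$. As $t\in(0,b_{\kappa,\beta})$ was arbitrary, $\sigma_p$ is defined and maximizes distance to $\Sigma$ on the whole interval $[0,b_{\kappa,\beta})$, so $p\in A$.

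I expect the main obstacle to be precisely this compactness/limit-curve step: one must arrange that $p_{n_0}\in A$ can be chosen with its Proposition \ref{prop:ein ray kommt selten allein}-neighborhood $V$ large enough around $p_{n_0}$ so that the ball $B_{p_{n_0}}(r)$ on which Lemma \ref{lem:I- gamma ist alles for comparison} provides its uniform causal control actually contains $p_n$ for $n$ large --- a matching problem between the (possibly small) neighborhood from Proposition \ref{prop:ein ray kommt selten allein} and $d(p_{n_0},p)$ that requires some care when $p$ sits on $\partial A$. This is precisely where the uniformity-in-the-base statement of Lemma \ref{lem:I- gamma ist alles for comparison} is essential: it translates the uniform past-cone structure in the comparison warped product into a uniform causal enclosure in $M$, allowing the limit-curve extraction to go through.
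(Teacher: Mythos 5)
Your overall strategy is the paper's: propagate the maximal-ray property over $\Sigma$ using Proposition \ref{prop:ein ray kommt selten allein}, get the local warped product from the Riccati argument of Theorem \ref{thm: max injectivity radius}, and use Lemma \ref{lem:I- gamma ist alles for comparison} plus global hyperbolicity to rule out premature breakdown of the orthogonal geodesics. But your closedness step has a genuine gap, exactly at the point you flag, and the fix you offer does not work. To confine $\sigma_{p_n}(t)$ for large $n$ you need the tail of $(p_n)$ to lie in $B_{p_{n_0}}(r)$ with $\overline{B_{p_{n_0}}(r)}\subset V$, where $V$ is the neighborhood supplied by Proposition \ref{prop:ein ray kommt selten allein} around $p_{n_0}$. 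Since $p_n\to p$, this forces $p\in\overline{B_{p_{n_0}}(r)}\subset V\subset A$, which is the conclusion you are trying to reach --- the argument is circular unless the radius of $V$ is bounded below as $p_{n_0}\to p$, and nothing in Proposition \ref{prop:ein ray kommt selten allein} provides such a uniform lower bound. The ``uniformity in the base'' of Lemma \ref{lem:I- gamma ist alles for comparison} cannot rescue this: that lemma is a statement about the comparison warped product, so in $M$ it only controls geodesics emanating from the region already known to carry the warped-product metric, i.e.\ from $V$ (or more generally from $A$), and the points $p_n$ sufficiently close to $p$ need not lie within a controlled intrinsic distance of $p_{n_0}$ inside that region.

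The paper sidesteps this by never working with an abstract open set $A$: it takes $U=B_r(\gamma(0))$ to be the \emph{largest metric ball centered at the fixed point} $\gamma(0)$ on which the splitting holds. A failure point $p$ then lies on $\partial U$ at distance exactly $r$ from the center, so $\sigma_p$ is a limit of the geodesics $\sigma_q$ with $q\in U$, and a single application of Lemma \ref{lem:I- gamma ist alles for comparison} over the whole ball (base point $\gamma(0)$, radius $r$) gives $\sigma_p\subset\overline{\exp^{N}([0,T)\cdot S^{+}N\Sigma|_{U})}\subset J^{-}(\gamma(\tilde t))$, whence compactness of $J^{+}(p)\cap J^{-}(\gamma(\tilde t))$ contradicts inextendibility; the case where $\sigma_p$ keeps existing but stops maximizing is excluded separately via continuity of the cut function (Lemma \ref{lem:The-cut-function is continuous}). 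You could repair your version along the same lines --- either by replacing $A$ with the maximal ball about $\gamma(0)$, or by applying Lemma \ref{lem:I- gamma ist alles for comparison} with base the full region $A$ and then proving that the intrinsic $A$-distances from a fixed basepoint to the $p_n$ stay bounded --- but as written the compactness needed for your limit-curve extraction is not established.
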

\begin{proof}
Let $U\subset\Sigma$ be as in Prop.\ \ref{prop:ein ray kommt selten allein}
and let $j:\mathbb{R}\times\Sigma\to T\Sigma^{\perp}$denote the map
$(t,p)\mapsto t\mathbf{n}_{p}$. Then $\exp^{N}\circ j:(0,b_{\kappa,\beta})\times U\to M$
is a diffeomorphism onto its image and by Thm.\ \ref{thm: max injectivity radius}
even an isometry if we equip $(0,b_{\kappa,\beta})\times U$ with
the metric $-dt^{2}+\frac{f_{\kappa,\beta}(t)^{2}}{f_{\kappa,\beta}(0)^{2}}g|_{U}$.
Now let $r>0$ be such that $U=B_{r}(\gamma(0))$ is the largest open
ball in $\Sigma$ such that $\exp^{N}\circ j|_{(0,b_{\kappa,\beta})\times U}$
is a diffeomorphism. If $U=\Sigma$ we are done. Otherwise there exists
a point $p\in\partial U$ such that $t\mapsto\exp^{N}(t\mathbf{n}_{p})=:\sigma(t)$
either stops existing or being maximizing before $b_{\kappa,\beta}$.

If it stops being maximizing but not existing the cut function $s_{\Sigma}^{+}:S^{+}N\Sigma\to(0,\infty]$
is continuous at $\dot{\sigma}(0)$ by Lem.\ \ref{lem:The-cut-function is continuous},
so we find a neighborhood $V$ of $p$ such that all f.d., unit-speed
geodesics starting in $V$ orthogonally to $\Sigma$ also have a cut
parameter less than $b_{\kappa,\beta}$, which contradicts $p\in\partial U$. 

If it stops existing at $T<b_{\kappa,\beta}$, then $\sigma\subset\overline{\exp^{N}([0,T)\cdot S^{+}N\Sigma|_{U})}$.
Now by Lem.\ \ref{lem:I- gamma ist alles for comparison} there exists
$\tilde{t}<b_{\kappa,\beta}$ such that $\{T\}\times B_{r}(\gamma(0))\subset J^{-}((\tilde{t},\gamma(0)))$,
hence $[0,T]\times B_{r}(\gamma(0))\subset J^{-}((\tilde{t},\gamma(0)))$.
But this shows $\sigma\subset\overline{\exp^{N}([0,T)\cdot S^{+}N\Sigma|_{U})}\subset J^{-}(\gamma(\tilde{t}))$,
so $\sigma$ is contained in the compact set $J^{+}(p)\cap J^{-}(\gamma(\tilde{t}))$,
contradicting its inextendibility.
\end{proof}

\section{\label{sec:Maximal-volume-rigidity}A splitting theorem for maximal
volume}

In this section we are going to look at spacetimes that are in a sense
maximal in volume, specifically we want the volume of distance balls
$B_{A}^{+}(t)$ over a set $A\subset\Sigma$ to be maximal. Obviously
this volume depends on the area of the base set, so we first introduce
a function $v_{\kappa,\beta}$ on our comparison spaces giving the
volume of future balls over a subset $A\subset\Sigma_{\kappa,\beta}$
in $M_{\kappa,\beta}$ relative to the area of $A$. 
\begin{defn}
Given $\kappa,\beta\in\mathbb{R}$ and any measurable set $A\subset\Sigma_{\kappa,\beta}$
with non-zero measure we define 
\[
v_{\kappa,\beta}(t):=\frac{\mathrm{vol}_{\kappa,\beta}B_{A}^{+}(t)}{\mathrm{area}_{\kappa,\beta}A}.
\]
Note that $\mathrm{tr}\,\mathbf{S}_{\kappa,\beta}=H_{\kappa,\beta}=(n-1)\frac{f'}{f}$
for warped products (\citep[Prop.~7.35]{ONeill_SRG}), so the variation
of area formula (\ref{eq:var of area}) and the coarea formula (\ref{eq:coarea formula})
show 
\[
v_{\kappa,\beta}(t)=\frac{1}{f_{\kappa,\beta}(0)^{n-1}}\int_{0}^{t}f_{\kappa,\beta}(\tau)^{n-1}d\tau
\]
for all $t\leq b_{\kappa,\beta}$. For $t\geq b_{\kappa,\beta}$ one
obviously has $v_{\kappa,\beta}(t)=v_{\kappa,\beta}(b_{\kappa,\beta})=:\bar{v}_{\kappa,\beta}$,
so $v_{\kappa,\beta}$ really is independent of the choice of $A$.
If $\kappa>0$ or $\beta<-(n-1)\sqrt{\left|\kappa\right|}$ then $b_{\kappa,\beta}<\infty$
and hence $\bar{v}_{\kappa,\beta}<\infty$. 

We are now ready to prove a splitting theorem if the volume of $B_{K}^{+}:=\bigcup_{s\in(0,\infty)}S_{K}^{+}(s)$
is finite and maximal (w.r.t.\ (\ref{eq:vol leq sth})) for compact
$K\subset\Sigma$.\end{defn}
\begin{thm}
[Maximal volume splitting] \label{thm:- max vol rigidity}If $\left(M,g,\Sigma\right)$
satisfies $CCC(\kappa,\beta)$ with either $\kappa>0$ or $\beta<-(n-1)\sqrt{\left|\kappa\right|}$
and there exists an exhaustion by compact sets $\{K_{n}\}_{n\in\mathbb{N}}$
for $\Sigma$ such that
\begin{equation}
\frac{\mathrm{vol}B_{K_{n}}^{+}}{\mathrm{area}K_{n}}=\bar{v}_{\kappa,\beta}\label{eq:vol max}
\end{equation}
for all $n$, then
\begin{equation}
I^{+}(\Sigma)\cong(0,b_{\kappa,\beta})\times_{f_{\kappa,\beta}}(\Sigma,\frac{1}{f_{\kappa,\beta}(0)^{2}}\, g|_{\Sigma}).\label{eq:max vol isometry}
\end{equation}

If furthermore $\Sigma$ is PCC, $\kappa>0$ and there exists an exhaustion
of compact sets $\{K_{n}\}_{n\in\mathbb{N}}$ for $\Sigma$ such that
also
\[
\frac{\mathrm{vol}B_{K_{n}}^{-}}{\mathrm{area}K_{n}}=\bar{v}_{\kappa,-\beta}
\]
for all $n$, then
\begin{equation}
M\cong(a_{\kappa,\beta},b_{\kappa,\beta})\times_{f_{\kappa,\beta}}(\Sigma,\frac{1}{f_{\kappa,\beta}(0)^{2}}\, g|_{\Sigma}).\label{eq:M isom maxvol}
\end{equation}
\end{thm}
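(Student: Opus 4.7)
The plan is to exploit volume maximality to force pointwise equality in the mean curvature comparison, producing a maximal $\Sigma$-ray, and then invoke Thm.\ \ref{thm:maximal ray splitting}. The two-sided splitting will follow by applying the first assertion to the time-reversed spacetime and gluing.

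For the first assertion, I parametrize $\mathcal{B}_{K_{n}}^{+}=B_{K_{n}}^{+}\setminus\mathrm{Cut}^{+}(\Sigma)$ through the normal exponential map: uniqueness of maximizing geodesics off $\mathrm{Cut}^{+}(\Sigma)$ makes $\Phi(t,p):=\exp^{N}(t\mathbf{n}_{p})$ a diffeomorphism from $\{(t,p)\in(0,\infty)\times K_{n}:t<s_{\Sigma}^{+}(\mathbf{n}_{p})\}$ onto $\mathcal{B}_{K_{n}}^{+}$. Writing $\Phi^{*}dV_{g}=J(t,p)\,dt\wedge d\mu_{\Sigma}(p)$, the computation in (\ref{eq:curv calc}) gives $J(0,p)=1$ and $\frac{d}{dt}\log J(t,p)=H_{t}(\gamma_{\mathbf{n}_{p}}(t))$. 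Thm.\ \ref{thm:mean curv comp}(2) bounds the right hand side by $H_{\kappa,\beta}(t)=(n-1)f'_{\kappa,\beta}(t)/f_{\kappa,\beta}(t)$, and integrating with $J(0,p)=1$ yields $J(t,p)\leq (f_{\kappa,\beta}(t)/f_{\kappa,\beta}(0))^{n-1}$. Combined with $s_{\Sigma}^{+}(\mathbf{n}_{p})\leq b_{\kappa,\beta}$ (Cor.\ \ref{cor: tau<b on I+}) and with $\mathrm{Cut}^{+}(\Sigma)$ having measure zero,
\begin{equation*}
\mathrm{vol}\,B_{K_{n}}^{+}=\int_{K_{n}}\int_{0}^{s_{\Sigma}^{+}(\mathbf{n}_{p})}J(t,p)\,dt\,d\mu_{\Sigma}(p)\leq\bar{v}_{\kappa,\beta}\,\mathrm{area}\,K_{n}.
\end{equation*}
The hypothesis (\ref{eq:vol max}) saturates this chain; since $(f_{\kappa,\beta}/f_{\kappa,\beta}(0))^{n-1}$ is strictly positive on $(0,b_{\kappa,\beta})$, equality forces $s_{\Sigma}^{+}(\mathbf{n}_{p})=b_{\kappa,\beta}$ for $\mu_{\Sigma}$-almost every $p\in K_{n}$. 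Any such $p$ yields a future inextendible unit-speed geodesic $\gamma_{\mathbf{n}_{p}}:[0,b_{\kappa,\beta})\to M$ that maximizes distance to $\Sigma$ throughout, i.e., a maximal $\Sigma$-ray, and Thm.\ \ref{thm:maximal ray splitting} delivers (\ref{eq:max vol isometry}).

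For the second assertion, reading off the mean curvature of the slice $\{0\}\times\Sigma$ in the warped product (\ref{eq:max vol isometry}) gives $H_{\Sigma}=H_{\kappa,\beta}(0)=\beta$ exactly. Then the time-reversed spacetime $(M,g)^{\mathrm{rev}}$ satisfies $CCC(\kappa,-\beta)$: its new future normal $-\mathbf{n}_{\Sigma}$ to $\Sigma$ has mean curvature $-H_{\Sigma}=-\beta\leq -\beta$, $\Sigma$ is FCC in $(M,g)^{\mathrm{rev}}$ because it is PCC in $(M,g)$, and the Ricci bound is time-symmetric. The past-volume hypothesis translates to the corresponding future-volume maximality in $(M,g)^{\mathrm{rev}}$, and since $\kappa>0$ the hypotheses of the first assertion are met; applying it yields $I^{-}(\Sigma)\cong(0,b_{\kappa,-\beta})\times_{f_{\kappa,-\beta}}(\Sigma,\frac{1}{f_{\kappa,-\beta}(0)^{2}}g|_{\Sigma})$. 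A direct check in Table \ref{tab:Warping-functions-for} (using $\cot^{-1}(-x)=\pi-\cot^{-1}(x)$) gives $f_{\kappa,-\beta}(0)=f_{\kappa,\beta}(0)$ and $f_{\kappa,-\beta}(-t)=f_{\kappa,\beta}(t)$, so reparametrizing $t\mapsto -t$ identifies this past warped product with $(a_{\kappa,\beta},0)\times_{f_{\kappa,\beta}}(\Sigma,\frac{1}{f_{\kappa,\beta}(0)^{2}}g|_{\Sigma})$. Being FCC and PCC in a globally hyperbolic spacetime makes $\Sigma$ a Cauchy hypersurface (remark following Def.\ \ref{def: FCC}), so $M=I^{-}(\Sigma)\cup\Sigma\cup I^{+}(\Sigma)$; the two warped products induce the same metric on $\Sigma$ and glue smoothly across $\{t=0\}$ to produce (\ref{eq:M isom maxvol}).

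The main anticipated difficulty is the careful identification of the pullback density $J(t,p)$ and its Riccati-type evolution; once that is established, the equality case is immediate. A subtle point is that the volume identity only forces $s_{\Sigma}^{+}(\mathbf{n}_{p})=b_{\kappa,\beta}$ for almost every $p\in K_{n}$ rather than every $p$, but since Thm.\ \ref{thm:maximal ray splitting} needs only one maximal $\Sigma$-ray, this a.e.\ statement already suffices.
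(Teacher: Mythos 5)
Your proposal is correct, but it takes a genuinely different route from the paper's for the main assertion. The paper's explicit goal in this section is an elementary, self-contained proof avoiding the Busemann function and the geometric maximum principle: it integrates the volume equality against the area comparison (Thm.\ \ref{thm:area comp}) to force $\mathrm{area}\,\EuScript S_{K}^{+}(t)=a_{\kappa,\beta}(t)\,\mathrm{area}\,K$ for every compact $K\subset\Sigma$ and every $t<b_{\kappa,\beta}$, deduces from this that $\mathrm{Cut}^{+}(\Sigma)=\emptyset$ and that $H_{t}\equiv H_{\kappa,\beta}(t)$ pointwise via the first variation of area, and then constructs the warped product directly, the delicate step being a bare-hands argument (explicit timelike connecting curves in the comparison warped product plus global hyperbolicity) that every normal geodesic survives until $b_{\kappa,\beta}$. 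You instead run a Bishop--Gromov-type Jacobian equality argument to conclude $s_{\Sigma}^{+}(\mathbf{n}_{p})=b_{\kappa,\beta}$ for almost every $p$, extract a single maximal $\Sigma$-ray, and invoke Thm.\ \ref{thm:maximal ray splitting}. This is logically sound --- and, as a bonus, needs the volume identity only for one compact set of positive area rather than an exhaustion --- but it makes the result depend on the Busemann-function and maximum-principle machinery that the section was written to circumvent, so it proves the statement without delivering the ``elementary proof'' the paper advertises. Two smaller points: (i) you assert but do not justify that $\gamma_{\mathbf{n}_{p}}|_{[0,b_{\kappa,\beta})}$ is future inextendible; this is needed for it to be a $\Sigma$-ray in the paper's sense, and follows because an endpoint at $t=b_{\kappa,\beta}$ would give $\tau_{\Sigma}(\gamma(b_{\kappa,\beta}))=b_{\kappa,\beta}$ by continuity of $\tau_{\Sigma}$, contradicting Cor.\ \ref{cor: tau<b on I+}. (ii) Your treatment of the two-sided statement (time reversal, the identity $f_{\kappa,-\beta}(-t)=f_{\kappa,\beta}(t)$, gluing across $\Sigma$ using that $\Sigma$ is Cauchy) is a fleshed-out version of the paper's one-sentence argument and is fine.
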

\begin{proof}
By Cor.\ \ref{cor: tau<b on I+} we have $\tau_{\Sigma}(q)<b_{\kappa,\beta}$
for all $q\in I^{+}(\Sigma)$, so $\mathrm{vol}B_{K_{n}}^{+}(b_{\kappa,\beta})=\mathrm{vol}B_{K_{n}}^{+}=\bar{v}_{\kappa,\beta}\,\mathrm{area}K_{n}=v_{\kappa,\beta}(b_{\kappa,\beta})\,\mathrm{area}K_{n}$.
Using this and the coarea formula (\ref{eq:coarea formula}) it follows
that
\begin{equation}
0=v_{\kappa,\beta}(b_{\kappa,\beta})-\frac{\mathrm{vol}B_{K_{n}}^{+}(b_{\kappa,\beta})}{\mathrm{area}K_{n}}=\int_{0}^{b_{\kappa,\beta}}\frac{\mathrm{area}_{\kappa,\beta}S_{A}^{+}(\tau)}{\mathrm{area}_{\kappa,\beta}A}-\frac{\mathrm{area}\EuScript S_{K_{n}}^{+}(\tau)}{\mathrm{area}K_{n}}d\tau\label{eq:whatevs}
\end{equation}
for any $A\subset\Sigma_{\kappa,\beta}$ with finite, non-zero measure.
Now by the area comparison theorem Thm.\ \ref{thm:area comp} the
integrand is always non-negative, so it has to be zero almost everywhere
and we obtain
\begin{equation}
a_{\kappa,\beta}(t):=\frac{\mathrm{area}_{\kappa,\beta}S_{A}^{+}(t)}{\mathrm{area}_{\kappa,\beta}A}=\frac{\mathrm{area}\EuScript S_{K_{n}}^{+}(t)}{\mathrm{area}K_{n}}\label{eq:area max}
\end{equation}
for almost all $t\leq b_{\kappa,\beta}$. Since $t\mapsto\frac{\mathrm{area}\EuScript S_{K_{n}}^{+}(t)}{\mathrm{area}_{\kappa,\beta}S_{A}^{+}(t)}$
is non-increasing (again Thm.\ \ref{thm:area comp}), the equality
(\ref{eq:area max}) follows for all $t<b_{\kappa,\beta}$. 

Next we show that thus (\ref{eq:area max}) holds for any compact
set $K\subset\Sigma$: Given $K$ choose $n\in\mathbb{N}$ such that
$K\subset K_{n}$. Then it follows immediately from the definition
of these spheres (see Def.\ \ref{def: spheres and balls}) that $\EuScript S_{K_{n}}^{+}(t)=\EuScript S_{K}^{+}(t)\cup\EuScript S_{K_{n}\setminus K}^{+}(t)$
and the union is disjoint. But then
\begin{multline*}
a_{\kappa,\beta}(t)\,\mathrm{area}K_{n}=\mathrm{area}\EuScript S_{K_{n}}^{+}(t)=\mathrm{area}\EuScript S_{K}^{+}(t)+\mathrm{area}\EuScript S_{K_{n}\setminus K}^{+}(t)\leq a_{\kappa,\beta}(t)\,\mathrm{area}K+\mathrm{area}\EuScript S_{K_{n}\setminus K}^{+}(t)\leq\\
\leq a_{\kappa,\beta}(t)\,(\mathrm{area}K+\mathrm{area}K_{n}\setminus K)=a_{\kappa,\beta}(t)\,\mathrm{area}K_{n},
\end{multline*}
so all inequalities have to be equalities, showing (\ref{eq:area max})
for $K$.

This allows us to prove that actually $\mathrm{Cut}^{+}(\Sigma)=\emptyset$:
First note that it suffices to show $\mathrm{Cut}^{+}(\Sigma)\cap S_{\Sigma}^{+}(t)=\emptyset$
for all $t<b_{\kappa,\beta}$ since we know $\tau_{\Sigma}(q)<b_{\kappa,\beta}$
for all $q\in I^{+}(\Sigma)$ from Cor.\ \ref{cor: tau<b on I+}.
Assume $p\in\mathrm{Cut}^{+}(\Sigma)\cap S_{\Sigma}^{+}(t)$, then
$p=\gamma(t)$ for some f.d., unit-speed geodesic $\gamma$ starting
orthogonally to $\Sigma$ with $t=s_{\Sigma}^{+}(\dot{\gamma}(0))$.
Since this $\gamma$ is certainly defined on an open interval containing
$t$ the cut function $s_{\Sigma}^{+}:S^{+}N\Sigma\to(0,\infty]$
is continuous at $\dot{\gamma}(0)$ by Lem.\ \ref{lem:The-cut-function is continuous}.
Let $\varepsilon>0$ with $t+\varepsilon<b_{\kappa,\beta}$. We can
choose a relatively compact neighborhood $V$ in $\Sigma$ of $\gamma(0)$
such that all f.d., unit-speed geodesics starting in $\bar{V}$ orthogonally
to $\Sigma$ have a cut parameter less than $t+\varepsilon<b_{\kappa,\beta}$.
But then $\EuScript S_{\bar{V}}^{+}(t+\varepsilon)=\emptyset$, contradicting
$0\neq a_{\kappa,\beta}(t+\varepsilon)=\frac{\mathrm{area}\EuScript S_{\bar{V}}^{+}(t)}{\mathrm{area}\bar{V}}$.

Next we will show that $H_{t}(q)=H_{\kappa,\beta}(t)$ for all $q\in I^{+}(\Sigma)$
with $t=\tau_{\Sigma}(q)$. To see this, let $\gamma$ be the unique
geodesic from $\Sigma$ to $q$ realizing the distance and choose
$K\subset\Sigma$ to be a compact neighborhood of $\gamma(0)$ such
that the normal exponential map is defined on $[0,t')\times K$ for
some $t'>t$. By (\ref{eq:area max}) the map $t\mapsto\frac{\mathrm{area}\EuScript S_{K}^{+}(t)}{\mathrm{area}_{\kappa,\beta}S_{A}^{+}(t)}$
is constant on $[0,t')$ and since the set $\EuScript S_{K}^{+}(t)=S_{K}^{+}(t)=\exp^{N}(\{t\}\times K)$
is compact we may proceed as in the proof of the area comparison theorem
and use the first variation of area (Prop.\ \ref{prop:first var of area})
to obtain
\begin{multline*}
0=\left.\frac{d}{ds}\right|_{s=t}\log\frac{\mathrm{area}_{\kappa,\beta}S_{A}^{+}(s)}{\mathrm{area}\, S_{K}^{+}(s)}=\left.\frac{d}{ds}\right|_{s=t}\log\mathrm{area}_{\kappa,\beta}S_{A}^{+}(s)-\left.\frac{d}{ds}\right|_{s=t}\log\mathrm{area}\, S_{K}^{+}(s)=\\
=\frac{1}{\mathrm{area}\, S_{K}^{+}(t)}\int_{S_{K}^{+}(t)}H_{\kappa,\beta}(t)-H_{t}(q)d\mu_{t}(q).
\end{multline*}
Now the integrand is non-negative (by the mean curvature comparison
theorem, see Thm.\ \ref{thm:mean curv comp}) and smooth (in $q$)
on $\EuScript S_{\Sigma}^{+}(t)=S_{\Sigma}^{+}(t)$ (because the normal
exponential map is a diffeomorphism away from the cut locus), hence
$H_{t}(q)=H_{\kappa,\beta}(t)$ for all $q\in S_{K}^{+}(t)$. 

By Thm.\ \ref{thm:mean curv comp} this already implies $\mathbf{S}_{t}=H_{\kappa,\beta}(t)\,\mathrm{id}=\frac{f'_{\kappa,\beta}(t)}{f_{\kappa,\beta}(t)}\,\mathrm{id}$
for all $t<b_{\kappa,\beta}$. Unfortunately, $\exp^{N}$ need a priori
not be defined on all of $(0,b_{\kappa,\beta})\cdot S^{+}N\Sigma$,
so there is still some more work to do than in Thm.\ \ref{thm: max injectivity radius}.
We can, however, proceed similarly: Using the normal exponential map
we obtain coordinates $(t,x)$ on an open submanifold of $M$ containing
$I^{+}(\Sigma)\cup\Sigma$ (note again that $\mathrm{Cut}^{+}(\Sigma)=\emptyset$)
in which $g=-dt^{2}+h(t,x)$ where for any $0\leq t<b_{\kappa,\beta}$
the expression $h(t,.)$ denotes the induced Riemannian metric on
the spacelike hypersurface $\EuScript S_{\Sigma}^{+}(t)=S_{\Sigma}^{+}(t)$
(which is just the $\{t\}$-level set of the distance function $\tau_{\Sigma}$).
Calculating as in (\ref{eq:curv calc}) we see that for $t>0$ and
$x\in S_{\Sigma}^{+}(t)$ 
\[
\frac{d}{dt}h_{ij}(t,x)=2\frac{f'_{\kappa,\beta}(t)}{f_{\kappa,\beta}(t)}h_{ij}(t,x).
\]
The solution of this equation is again given by $h_{ij}(t,x)=\frac{h_{ij}(0,x)}{f_{\kappa,\beta}(0)^{2}}\, f_{\kappa,\beta}(t)^{2}$.

This shows that \[ I^{+}(\Sigma,M)\cong(\mathcal{D}\cap\left((0,b_{\kappa,\beta})\cdot S^{+}N\Sigma\right),-dt^{2}+\frac{f_{\kappa,\beta}(t)^{2}}{f_{\kappa,\beta}(0)^{2}}g|_{\Sigma})\subset[0,b_{\kappa,\beta})\times_{f_{\kappa,\beta}}(\Sigma,\frac{1}{f_{\kappa,\beta}(0)^{2}}\, g|_{\Sigma})=:M_{\kappa,\beta}, \]
so it is isometric to an open submanifold of the warped product.

It only remains to show that all f.d., unit-speed geodesics starting
orthogonally to $\Sigma$ are defined in $M$ on $[0,b_{\kappa,\beta})$,
i.e., they remain in the submanifold $I^{+}(\Sigma,M)\subset M_{\kappa,\beta}$.
Assume to the contrary that there exists such a geodesic $\gamma:[0,T)\to I^{+}(\Sigma,M)$
with $T<b_{\kappa,\beta}$ that is inextendible in $M$. Let $\varepsilon>0$
such that $T+\varepsilon<b_{\kappa,\beta}$. Then $\mathrm{area}S_{\bar{U}}^{+}(T+\varepsilon)$
has to be maximal for any relatively compact neighborhood $U$ of
$q:=\gamma(0)$ in $\Sigma$ and hence non-zero, in particular $S_{\bar{U}}^{+}(T+\varepsilon)\neq\emptyset$.
Thus there exists a sequence of $q_{n}\in\Sigma$ with $d_{\Sigma}(q,q_{n})=\frac{1}{n}$
(where $d_{\Sigma}$ is the Riemannian distance on $\Sigma$ induced
by $g|_{\Sigma}$) such that the corresponding $\gamma_{n}$ exist
until at least $T+\varepsilon$. Set $p_{n}:=\gamma_{n}(T+\varepsilon)$. 

Let $V$ be a relatively compact and geodesically convex neighborhood
of $q$ in $\Sigma$ and choose $N$ such that that $q_{n}\in V$
for all $n>N$. Now for any $0<\delta\leq T$ let $\sigma_{n,\delta}:[0,s_{\mathrm{max}})\to M_{\kappa,\beta}$
be defined by $\sigma_{n,\delta}(s):=(T+\varepsilon-s,c_{n}(s\,\frac{1}{n(\varepsilon+\delta)}))$
where $c_{n}:[0,b)\to\Sigma$ is the \emph{unit-speed} geodesic in
$\Sigma$ starting at $q_{n}$ in direction $q$. Note that because
$V$ was chosen to be geodesically convex the curve $\sigma_{n,\delta}$
is actually well-defined on $[0,\varepsilon+\delta]$, its projection
to the second coordinate is contained in $V$ and $\sigma_{n,\delta}(0)=(T+\varepsilon,c_{n}(0))=\gamma_{n}(T+\varepsilon)=p_{n}$
and $\sigma_{n,\delta}(\varepsilon+\delta)=(T-\delta,c_{n}(d_{\Sigma}(q,q_{n})))=(T-\delta,q)=\gamma(T-\delta)$.
We have $\dot{\sigma}_{n,\delta}(s)=(-1,\frac{1}{n(\varepsilon+\delta)}\dot{c}_{n}(\frac{s}{n(\varepsilon+\delta)}))$,
so for $n>\max_{s\in[0,T+\varepsilon]}\frac{f_{\kappa,\beta}(T+\varepsilon-s)^{2}}{f_{\kappa,\beta}(0)^{2}\varepsilon}$
we have 
\[
g(\dot{\sigma}_{n,\delta}(s),\dot{\sigma}_{n,\delta}(s))=-1+\frac{f_{\kappa,\beta}(T+\varepsilon-s)^{2}}{f_{\kappa,\beta}(0)^{2}n(\varepsilon+\delta)}<0
\]
Note that this bound on $n$ is independent of $\delta$. So if we
fix $N$ large enough, we see that, at least in $M_{\kappa,\beta}$,
the curves $\sigma_{N,\delta}:[0,\varepsilon+\delta]\to M_{\kappa,\beta}$
can be used to give a timelike connection from $p_{N}$ to any point
on $\gamma$. 

Next we show that actually $\sigma_{N,\delta}:[0,\varepsilon+\delta]\to I^{+}(\Sigma,M)\subset M\subset M_{\kappa,\beta}$
for any $0<\delta<T$. Fix $\delta$. Since we chose $\sigma_{N,\delta}(0)=p_{N}\in I^{+}(\Sigma,M)$
and $I^{+}(\Sigma,M)\subset M_{\kappa,\beta}$ is open we get that
$s_{0}:=\sup\{s\in[0,\varepsilon+\delta]:\,\sigma_{N,\delta}|_{[0,s)}\subset I^{+}(\Sigma,M)\}>0$.
If $s_{0}=\varepsilon+\delta$ we are finished since then $\sigma_{N,\delta}=\sigma_{N,\delta}|_{[0,\varepsilon+\delta)}\cup\gamma(T-\delta)\subset I^{+}(\Sigma,M)$.
So assume that $0<s_{0}<\varepsilon+\delta$. Then the curve $\sigma_{N,\delta}|_{[0,s_{0})}\subset I^{+}(\Sigma,M)$
is a (past) inextendible, p.d., timelike curve in $M$ and  $\sigma_{N,\delta}([0,s_{0}))\subset J^{-}(p_{N},M)\cap J^{+}(\Sigma,M)$
which is compact (because $\Sigma$ is FCC and $M$ is globally hyperbolic).
This contradicts global hyperbolicity of $M$.

This shows that $\sigma_{N,\delta}$ is a timelike curve from $p_{N}$
to $\gamma(T-\delta)$ in $M$ for any $0<\delta<T$, so the original
inextendible geodesic $\gamma$ is contained in $J^{-}(p_{N},M)\cap J^{+}(q,M)$,
which again contradicts global hyperbolicity of $M$.

The second assertion follows by reversing the time orientation of
$M$ (note that while a bound from above on $H_{\Sigma}^{+}$ will
in general only translate to a bound from below for $H_{\Sigma}^{-}$,
the previous calculations show that $H_{\Sigma}^{+}$ and hence also
$H_{\Sigma}^{-}$ are constant anyways).
\end{proof}
Contrary to the earlier two results (Thm.\ \ref{thm: max injectivity radius}
and Thm.\ \ref{thm:maximal ray splitting}) this last theorem can
easily be adapted to all remaining possible values of $\kappa,\beta$
(and not only $\kappa\leq0$ and $\beta=-(n-1)\sqrt{\left|\kappa\right|}$),
by slightly tweaking the assumptions.
\begin{prop}
\label{prop:Let--satisfy}Let $\left(M,g,\Sigma\right)$ satisfy $CCC(\kappa,\beta)$
with $\kappa\leq0$ and $\beta>-(n-1)\sqrt{\left|\kappa\right|}$
and assume that there exists an exhaustion by compact sets $\{K_{m}\}_{m\in\mathbb{N}}$
for $\Sigma$ and a sequence of times $t_{n}\to\infty$ such that
\[
\lim_{n\to\infty}\left(v_{\kappa,\beta}(t_{n})-\frac{\mathrm{vol}B_{K_{m}}^{+}(t_{n})}{\mathrm{area}K_{m}}\right)=0
\]
for all $m$, then
\[
I^{+}(\Sigma)\cong(0,b_{\kappa,\beta})\times_{f_{\kappa,\beta}}(\Sigma,\frac{1}{f_{\kappa,\beta}(0)^{2}}\, g|_{\Sigma}).
\]
\end{prop}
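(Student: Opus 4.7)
The plan is to imitate the proof of Theorem~\ref{thm:- max vol rigidity}, replacing the exact-volume hypothesis by the present asymptotic one and producing the same key area identity~(\ref{eq:area max}). The whole proof hinges on that identity; once it is in hand, the rest of the argument (Riccati rigidity, warped-product reconstruction via $\partial_t h_{ij} = 2\tfrac{f'_{\kappa,\beta}}{f_{\kappa,\beta}} h_{ij}$, and extendibility of normal geodesics) depends only on~(\ref{eq:area max}) and on properties that are available regardless of whether $b_{\kappa,\beta}$ is finite or infinite.

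To derive~(\ref{eq:area max}) I would first use the coarea formula~(\ref{eq:coarea formula}) and its warped-product analogue to rewrite, for any $A \subset \Sigma_{\kappa,\beta}$ of finite non-zero measure,
$$v_{\kappa,\beta}(t_n) - \frac{\mathrm{vol}\,B_{K_m}^+(t_n)}{\mathrm{area}\,K_m} = \int_0^{t_n}\!\left(\frac{\mathrm{area}_{\kappa,\beta} S_A^+(\tau)}{\mathrm{area}_{\kappa,\beta} A} - \frac{\mathrm{area}\,\EuScript S_{K_m}^+(\tau)}{\mathrm{area}\,K_m}\right) d\tau.$$
The area comparison theorem (Thm.~\ref{thm:area comp}) forces the integrand to be pointwise non-negative. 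Since the integrals over $[0,t_n]$ tend to zero and the integrand does not depend on $n$, for any fixed $T<\infty$ we eventually have $t_n \geq T$, so $\int_0^T (\text{integrand})\,d\tau = 0$. Hence the integrand vanishes almost everywhere on $[0,\infty)$, and the monotonicity of $t \mapsto \frac{\mathrm{area}\,\EuScript S_{K_m}^+(t)}{\mathrm{area}_{\kappa,\beta} S_A^+(t)}$ from Thm.~\ref{thm:area comp}, combined with its limiting value $\frac{\mathrm{area}\,K_m}{\mathrm{area}_{\kappa,\beta} A}$ at $t \searrow 0$, promotes this a.e. equality to equality for every $t \in [0,b_{\kappa,\beta}) = [0,\infty)$.

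From here the proof of Theorem~\ref{thm:- max vol rigidity} carries over almost verbatim: spreading the identity from the $K_m$ to any compact $K \subset \Sigma$ via the disjoint decomposition $\EuScript S_{K_n}^+(t) = \EuScript S_K^+(t) \cup \EuScript S_{K_n \setminus K}^+(t)$; ruling out $\mathrm{Cut}^+(\Sigma)$ using lower semicontinuity of $s_\Sigma^+$ (Lem.~\ref{lem:The-cut-function is continuous}) and the positivity of $a_{\kappa,\beta}(t+\varepsilon)$; promoting the area rigidity to the mean-curvature rigidity $H_t = H_{\kappa,\beta}(t)$ via the first variation of area together with the rigidity part of Thm.~\ref{thm:mean curv comp}; and then reading off the warped-product form of $g$ in the resulting normal-exponential coordinates.

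The one subtlety worth flagging is the final extendibility step, which in Theorem~\ref{thm:- max vol rigidity} used that $f_{\kappa,\beta}$ is bounded on $[0,T+\varepsilon]$ for $T+\varepsilon < b_{\kappa,\beta}$. For the present case $b_{\kappa,\beta}=\infty$ any hypothetical finite stopping time $T$ of a normal geodesic still lies in a compact subinterval of $[0,\infty)$, on which $f_{\kappa,\beta}$ (inspection of Table~\ref{tab:Warping-functions-for} shows it is $\cosh$, $\exp$, $\sinh$, $1$, or linear) is bounded; hence the construction of the approximating timelike curves $\sigma_{N,\delta}$ and the global hyperbolicity contradiction go through unchanged. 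The main obstacle, and really the only new ingredient compared with Theorem~\ref{thm:- max vol rigidity}, is therefore the non-negativity-plus-exhaustion argument of Step~1 above; everything downstream of~(\ref{eq:area max}) is mechanical.
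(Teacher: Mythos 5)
Your proposal is correct and follows essentially the same route as the paper: the paper's own proof consists precisely of rewriting the volume deficit via the coarea formula as $\int_0^{t_n}$ of the non-negative area deficit, passing to the limit to conclude the integrand vanishes a.e.\ on $[0,\infty)$, and then declaring that ``the rest follows exactly as above'' (i.e.\ as in Thm.~\ref{thm:- max vol rigidity}). Your additional check that the final extendibility argument survives the passage to $b_{\kappa,\beta}=\infty$ (boundedness of $f_{\kappa,\beta}$ only being needed on the compact interval $[0,T+\varepsilon]$) is a point the paper leaves implicit, and is correctly resolved.
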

\begin{proof}
The proof remains largely same, only in (\ref{eq:whatevs}) one uses
that
\begin{multline*}
0=\lim_{n\to\infty}\left(v_{\kappa,\beta}(t_{n})-\frac{\mathrm{vol}B_{K_{m}}^{+}(t_{n})}{\mathrm{area}K_{m}}\right)=\lim_{n\to\infty}\int_{0}^{t_{n}}\frac{\mathrm{area}_{\kappa,\beta}S_{A}^{+}(\tau)}{\mathrm{area}_{\kappa,\beta}A}-\frac{\mathrm{area}\EuScript S_{K_{m}}^{+}(\tau)}{\mathrm{area}K_{m}}d\tau=\\
=\int_{0}^{\infty}\frac{\mathrm{area}_{\kappa,\beta}S_{A}^{+}(\tau)}{\mathrm{area}_{\kappa,\beta}A}-\frac{\mathrm{area}\EuScript S_{K_{m}}^{+}(\tau)}{\mathrm{area}K_{m}}d\tau
\end{multline*}
by positivity of the integrand to get (\ref{eq:area max}) for almost
all $t<\infty$. The rest follows exactly as above.
\end{proof}
To summarize, the above Thm.\ \ref{thm:- max vol rigidity} and Prop.\
\ref{prop:Let--satisfy} complement the main splitting Theorem \ref{thm:maximal ray splitting}
nicely: Using a slightly stronger assumption leads to both a very
natural and elementary proof and a natural generalization to all possible
curvature bounds (whereas Thm.\ \ref{thm:maximal ray splitting}
only looks at ones that lead to a finite bound $b_{\kappa,\beta}$
on $\tau_{\Sigma}$ or that are boundary cases in the sense that $b_{\kappa,\beta}=\infty$
but $b_{\kappa,\bar{\beta}}<\infty$ for all $\bar{\beta}<\beta$).

\subsection*{Acknowledgment}

The author is the recipient of a DOC Fellowship of the Austrian Academy
of Sciences at the Institute of Mathematics at the University of Vienna.
This work was also partially supported by the Austrian Science Fund
(FWF) project number P28770. 

\bibliographystyle{amsalpha}
\bibliography{bibtex_BS2}

\providecommand{\bysame}{\leavevmode\hbox to3em{\hrulefill}\thinspace}
\providecommand{\MR}{\relax\ifhmode\unskip\space\fi MR }
\providecommand{\MRhref}[2]{%
  \href{http://www.ams.org/mathscinet-getitem?mr=#1}{#2}
}
\providecommand{\href}[2]{#2}
\begin{thebibliography}{BEMG85}

\bibitem[AGH96]{AGH1997}
L.~Andersson, G.~J. Galloway, and R.~Howard, \emph{{A strong Maximum Principle
  for weak solutions of quasi-linear Elliptic Equations with applications to
  Lorentzian and Riemannian Geometry}}, Comm. Pure Appl. Math \textbf{51}
  (1996), 581--624.

\bibitem[AH98]{AnderssonHoward}
L.~Andersson and R.~Howard, \emph{{Comparison and rigidity theorems in
  semi-{R}iemannian geometry}}, Comm. Anal. Geom. \textbf{6} (1998), no.~4,
  819--877.

\bibitem[BEE96]{BEE96}
J.~K. Beem, P.~E. Ehrlich, and K.~L. Easley, \emph{{Global Lorentzian
  Geometry}}, Dekker, New York, 1996.

\bibitem[BEMG85]{BEMG1985}
J.~K. Beem, P.~E. Ehrlich, S.~Markvorsen, and G.~J. Galloway,
  \emph{{Decomposition theorems for Lorentzian manifolds with nonpositive
  curvature}}, J. Differential Geom. \textbf{22} (1985), no.~1, 29--42.

\bibitem[BS05]{bernalSanchez_globHypSplitting}
A.~N. Bernal and M.~S{\'a}nchez, \emph{{Smoothness of Time Functions and the
  Metric Splitting of Globally Hyperbolic Spacetimes}}, Communications in
  Mathematical Physics \textbf{257} (2005), 43--50.

\bibitem[BS06]{BernalSanchez2006}
\bysame, \emph{{Further results on the smoothability of Cauchy hypersurfaces
  and Cauchy time functions}}, Letters in Mathematical Physics \textbf{77}
  (2006), no.~2, 183--197.

\bibitem[CG71]{CGro}
J.~Cheeger and D.~Gromoll, \emph{{The splitting theorem for manifolds of
  nonnegative Ricci curvature}}, J. Diff. Geom. \textbf{6} (1971), 119--128.

\bibitem[Che75]{Cheng}
S.~Y. Cheng, \emph{{Eigenvalue comparison theorems and geometric
  applications}}, Math. Z. \textbf{143} (1975), 289--297.

\bibitem[EG92]{EG1992}
J.-H. Eschenburg and G.~J. Galloway, \emph{Lines in space-times}, Comm. Math.
  Phys. \textbf{148} (1992), no.~1, 209--216.

\bibitem[EH90]{EH}
J.-H. Eschenburg and E.~Heintze, \emph{{Comparison theory for Riccati
  equations}}, Manuscripta Math. \textbf{68} (1990), 209--214.

\bibitem[Esc88]{eschenburg1988}
J.-H. Eschenburg, \emph{The splitting theorem for space-times with strong
  energy condition}, J. Differential Geom. \textbf{27} (1988), no.~3, 477--491.

\bibitem[Gal89a]{GALLOWAY1989_4}
G.~J. Galloway, \emph{{Some connections between global hyperbolicity and
  geodesic completeness}}, Journal of Geometry and Physics \textbf{6} (1989),
  no.~1, 127 -- 141.

\bibitem[Gal89b]{galloway1989_3}
\bysame, \emph{{The Lorentzian splitting theorem without the completeness
  assumption}}, J. Differential Geom. \textbf{29} (1989), no.~2, 373--387.

\bibitem[Ger70]{Geroch1970uw}
R.~P. Geroch, \emph{{The domain of dependence}}, J. Math. Phys. \textbf{11}
  (1970), 437--439.

\bibitem[Gra16]{G2016}
M.~Graf, \emph{{Volume comparison for $C^{1,1}$ metrics}}, {Ann. Global Anal.
  Geom.} (2016), doi:10.1007/s10455--016--9508--2.

\bibitem[New90]{newman1990}
R.~P. A.~C. Newman, \emph{{A proof of the splitting conjecture of S.-T. Yau}},
  J. Differential Geom. \textbf{31} (1990), no.~1, 163--184.

\bibitem[O'N83]{ONeill_SRG}
B.~O'Neill, \emph{{Semi-Riemannian Geometry}}, Academic Press, New York, 1983.

\bibitem[Sen98]{Seno1}
J.~M.~M. Senovilla, \emph{{Singularity Theorems and their consequences}}, Gen.
  Rel. Grav. \textbf{30} (1998), no.~5, 701--848.

\bibitem[Sim83]{Simon_GeomMeasure}
L.~Simon, \emph{{Lectures on Geometric Measure Theory}}, {Proceedings of the
  Centre for Mathematical Analysis, Australian National University}, vol.~3,
  Centre for Mathematical Analysis, Australian National University, 1983.

\bibitem[TG13]{TG}
J.-H. Treude and J.~D.~E. Grant, \emph{{Volume comparison for hypersurfaces in
  Lorentzian manifolds and singularity theorems}}, Ann. Global Anal. Geom.
  \textbf{43} (2013), no.~3, 233--251.

\bibitem[Tre]{Treude_Diplomarbeit}
J.-H. Treude, \emph{{Ricci curvature comparison in Riemannian and Lorentzian
  geometry, Diploma Thesis}},
  http://www.freidok.uni-freiburg.de/volltexte/8405.

\end{thebibliography}

\end{document}